\documentclass[10pt,letterpaper]{article}
\usepackage[T1]{fontenc}
\usepackage[utf8]{inputenc}
\usepackage{amsmath}
\usepackage{amsfonts}
\usepackage{amssymb}
\usepackage{graphicx}
\usepackage{verbatim}
\usepackage{mathrsfs}
\usepackage{upref,amsthm,amsxtra,exscale}
\usepackage{dsfont}
\usepackage{nameref}
\usepackage{varioref}
\usepackage[colorlinks=true,urlcolor=blue,
citecolor=red,linkcolor=blue,linktocpage,pdfpagelabels,
bookmarksnumbered,bookmarksopen]{hyperref}
\usepackage{cleveref}
\usepackage{cite}
\usepackage{fullpage}
\usepackage[dvipsnames]{xcolor}
\usepackage{bbm}
\usepackage{appendix}

\usepackage[dvipsnames]{xcolor}

\newtheorem{theorem}{Theorem}[section]
\newtheorem{corollary}[theorem]{Corollary}
\newtheorem{remark}[theorem]{Remark}

\newtheorem{lemma}[theorem]{Lemma}
\newtheorem{proposition}[theorem]{Proposition}
\newtheorem{definition}[theorem]{Definition}
\newtheorem{example}[theorem]{Example}

\numberwithin{equation}{section}

\def\r{\mathbb{R}}
\def\rn{\mathbb{R}^N}
\def\z{\mathbb{Z}}

\def\s1{\mathbb{S}^1}
\def\n{\mathbb{N}}
\def\cc{\mathbb{C}}
\def\eps{\varepsilon}
\def\rh{\rightharpoonup}
\def\io{\int_{\Omega}}
\def\irn{\int_{\r^N}}
\def\vp{\varphi}

\def\vr{\varrho}
\def\o{\Omega}
\def\t{\Theta}

\def\cC{\mathcal{C}}

\def\cI{\mathcal{I}}
\def\cJ{\mathcal{J}}

\def\supp{\text{supp}}
\def\bar{\overline}

\def\d{\,\mathrm{d}}
\def\e{\mathrm{e}}

\def\dist{\mathrm{dist}}

\def\sgn{\operatorname{sign}}
\def\supp{\operatorname{supp}}

\newcommand{\norm}[1]{\left \| #1\right \|}
\newcommand{\R}{\mathbb{R}}

\definecolor{DarkGreen}{rgb}{0.0, 0.5, 0.0}

\title{Sublinear elliptic equations with a sharp change of sign in the nonlinearity}
\author{Mónica Clapp, Alberto Saldaña, and Delia Schiera}
\date{}

\begin{document}

\maketitle

\begin{abstract}
We study the semilinear indefinite elliptic problem
\[
-\Delta u = Q_\Omega |u|^{p-2}u \quad \text{in } \mathbb{R}^N,
\]
where $Q_\Omega = \chi_\Omega - \chi_{\mathbb{R}^N \setminus \Omega}$, $\Omega \subset \mathbb{R}^N$ is a bounded smooth subset, $N \geq 3$, and $1 \leq p < 2$, with $p=1$ corresponding to the sign nonlinearity. Using a variational approach, we investigate the uniqueness or multiplicity of nonnegative solutions depending on the shape of $\Omega$ and the existence of different types of nodal solutions.  We also show that all solutions have compact support and analyze how the support of the ground state depends on $p$, 
proving convergence to the whole space as $p\to 2^{-}$ and identifying some 
qualitative features such as starshapedness and Lipschitz regularity of the support. We also establish a link between these problems and a two-phase Serrin-type torsion overdetermined problem. 

\medskip

\noindent \textbf{Mathematics Subject Classification:} 
35J61	%Semilinear elliptic equations
35A02   %Uniqueness problems for PDEs: global uniqueness, local uniqueness, non-uniqueness
(primary) 
35J20   %Variational methods for second-order elliptic equations
35B09   %Positive solutions to PDEs
35B40 %Asymptotic behavior of solutions to PDEs
(secondary).

\medskip

\noindent \textbf{Keywords:} 
Uniqueness and multiplicity of nonnegative solutions,
mountain pass solutions,
sublinear indefinite problems, 
dead core solutions, starshaped support.
\end{abstract}

\section{Introduction}

Let $\o$ be a smooth open bounded subset of $\rn$ (not necessarily connected), $N\geq 3$, and let 
\begin{equation*}
Q_\o(x):=
\begin{cases}
1, & \text{if \ }x\in\o, \\
-1,& \text{if \ }x\in\rn\smallsetminus\o.
\end{cases}
\end{equation*}	
In this paper we study nonnegative and nodal solutions of the following problem
\begin{align}\label{sublinear}
    -\Delta u=Q_\o \, f_p(u),\qquad u \in X^p:=D^{1,2}(\rn)\cap L^{p}(\rn),
\end{align}
where 
$D^{1,2}(\rn):=\{u\in L^{2^*}(\rn)\;:\;\nabla u\in L^2(\rn,\rn)\}$,  
\begin{align}\label{fp}
f_p(t):=
\begin{cases}
    |t|^{p-2} t, & \text{ if } p\in[1,2),\\
    \sgn(t), & \text{ if } p=1,
\end{cases}
\end{align}
and $\sgn$ denotes the sign nonlinearity, namely,    
\begin{align*}
\sgn(t):=\begin{cases}
    1, & \text{ if } t >0,\\
    0,& \text{ if } t=0,\\
    -1, & \text{ if } t<0. 
\end{cases} 
\end{align*} 

Problem \eqref{sublinear} is related to some models in nonlinear optics that study waveguides propagating through a stratified dielectric medium, see \cite{stuart, stuart2}. It has also been studied in the context of population biology \cite{spruck,GURTIN} and its system counterpart has been employed to model competing species attracted to a specific region in space and repelled from its complement; see \cite{CSS}.

This motivates the study of solutions to \eqref{sublinear}, which has been investigated recently using variational methods, where the exponent $p$ plays a crucial role since, depending on its value, the existence and the qualitative properties of the solutions change. The critical case $f_{p}(t):=|t|^{p-2} t$ with $p=2^*=\frac{2N}{N-2}$ is studied in \cite{CPS25,CFS25}, where existence and nonnexistence of solutions is guaranteed depending on the topology and the geometry of the set $\Omega.$ The superlinear and linear cases $p\in [2,2^*)$ are investigated in \cite{CHS25,CMSS}, where existence of ground states (i.e. least energy solutions), least energy nodal solutions, and eigenfunctions is shown together with symmetry and sharp decay estimates. In particular, these decay estimates give a clear example on the decisive role that the exponent $p$ has in the qualitative behavior of solutions.  For instance, if $u$ is a solution of \eqref{sublinear}, then there is a constant $C>0$ such that, for $|x|>1$,
\begin{align*}
    |u(x)|&< C |x|^{2-N}, \hspace{2.1cm} \text{ if }p\in \left(\frac{2N-2}{N-2},2^*\right),\\
    |u(x)|&< C(|x|\sqrt{\ln|x|})^{2-N}, \qquad \text{ if }p=\frac{2N-2}{N-2},\\
    |u(x)|&< C|x|^{\frac{2}{2-p}}, \hspace{2.2cm} \text{ if }p\in \left(2,\frac{2N-2}{N-2}\right),\\
    |u(x)|&< C|x|^{\frac{1-N}{2}}e^{-\sqrt{\Lambda_1}|x|}, \qquad \ \text{ if }p=2,
\end{align*}
where $\Lambda_1:=\inf \left\{\int_{\R^N} |\nabla v|^2\::\: v \in D^{1,2}(\R^N),\ \int_{\R^N} Q_\Omega |v|^p=1\right\}>0.$  As can be seen, the decay of the solutions improves as $p$ diminishes. It is natural to ask what is the decay of solutions for $p\in [1,2)$. But, what is better than exponential decay for a solution of an elliptic equation?  In this paper, we show that solutions in the sublinear regime have \emph{compact support}.  Furthermore, we give a broad overview of the existence, uniqueness, and other qualitative properties of the solutions of \eqref{sublinear} for $p\in[1,2).$

Our first result describes the existence of a unique ground state and of two nodal solutions: one of least-energy type and one of mountain pass type. It also states that \emph{all} solutions have compact support and that the support grows (at least in the case of the ground state) as $p$ tends to the eigenvalue case $p=2$.   

\begin{theorem}\label{thm:main}
Let $p\in[1,2)$ and let $\o$ be a smooth open bounded subset of $\rn$. 
\begin{enumerate}
    \item \eqref{sublinear} has a unique nonnegative ground state $w_p$, and $w_p>0$ on $\overline \Omega$.
    \item All solutions of \eqref{sublinear} have compact support. Furthermore, the support of the nonnegative ground state $w_p$ tends to $\rn$ as $p\to 2^-$. More precisely, for every $R>0$, there is $p_0=p_0(R,\Omega)\in(1,2)$ so that $B_{R}(0)\subset \supp(w_p)$ if $p\in (p_0,2)$.
    \item If $\Omega$ is connected, then \eqref{sublinear} has a least energy nodal solution and a nodal solution of mountain pass type. Moreover, these solutions are different in some dumbbell domains.
\end{enumerate}
\end{theorem}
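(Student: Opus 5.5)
The argument works with the energy functional
\[
J_p(u)=\tfrac12\irn|\nabla u|^2-\tfrac1p\irn Q_\o|u|^p,\qquad u\in X^p,
\]
which is $C^1$ for $p\in(1,2)$ and only locally Lipschitz for $p=1$. In the case $p=1$ I would use Clarke's generalized gradient, so that critical points solve the inclusion $-\Delta u\in Q_\o\,\partial|u|$. Even after removing that difficulty at $p=1$, the functional is not obviously well behaved, because the negative part of $Q_\o$ makes it coercive rather than merely bounded below. Indeed, $\irn Q_\o|u|^p\le \int_\o|u|^p\le C\|u\|_{L^{2^*}}^p$. Since $p<2$, this gives $J_p(u)\ge \tfrac12\|\nabla u\|^2-C\|\nabla u\|^p-\tfrac1p\int_{\rn\smallsetminus\o}|u|^p$, and the last term has the right sign, so $J_p$ is coercive on $X^p$.

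\textbf{Part 1.} I would minimize $J_p$ over $X^p$, where $J_p(tu)<0$ for small $t$ whenever $u\in C_c^\infty(\o)$. The functional is weakly lower semicontinuous: the term $\int_\o|u|^p$ is compact by Rellich on the bounded set $\o$, and the term $-\int_{\o^c}|u|^p$ enters with a favorable sign, so Fatou's lemma applies. Hence a minimizer exists, and it may be replaced by $|u|$. For uniqueness I would use the hidden convexity available in the sublinear case (Brezis--Oswald / Díaz--Saa). The map $\rho\mapsto\int|\nabla\rho^{1/2}|^2$ is convex, and $\rho\mapsto-\int Q_\o\rho^{p/2}$ splits into a strictly convex part on $\o$ (because $p/2<1$) and a linear-or-concave part on $\o^c$. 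The $\o^c$ part breaks convexity, so I would instead run the Díaz--Saa argument directly: test the equations for two nonnegative solutions $u,v$ with $(u^2-v^2)/u$ and $(u^2-v^2)/v$. This yields
\[
0\le\irn Q_\o\big(u^{p-2}-v^{p-2}\big)(u^2-v^2).
\]
On $\o$ the integrand is $\le0$, strictly unless $u=v$. On $\o^c$ the sign is wrong, which is the delicate point. I would handle it by applying this only to ground states and comparing energies. Along the curve $\rho_t=(1-t)u^2+tv^2$, the Dirichlet part is convex, the $\o$ part is strictly convex, and on $\o^c$ the term $+\tfrac1p\int\rho_t^{p/2}$ is concave. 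This forces a restriction to the supports, so I expect uniqueness to require showing that both minimizers coincide on $\o$ (by strict convexity there) and then using unique continuation of the exterior obstacle-type problem with prescribed data on $\partial\o$. Positivity on $\overline\o$ follows from the strong maximum principle: $-\Delta u\ge0$ in $\o$, and Hopf-type arguments apply near $\partial\o$ since $u$ is superharmonic wherever $u>0$ and $u\not\equiv0$ in each component of $\o$.

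\textbf{Part 2.} Outside a large ball every solution satisfies $-\Delta|u|\le -|u|^{p-1}$ in the sense of Kato. For $p<2$ I would compare with the explicit barrier $c(|x|-r)_+^{2/(2-p)}$ on annuli, which is the classical dead-core comparison. Since $u\to0$ at infinity by the decay already known, this yields compact support. For the growth of the support, I would rescale so that $v=\lambda u$ puts the equation in eigenvalue-like form. As $p\to2^-$, the normalized ground states $w_p/\|w_p\|$ should converge to the principal eigenfunction for $p=2$, which is positive everywhere. A uniform $C^1_{loc}$ bound together with a lower barrier, for instance positivity of the limit on $\overline{B_R}$, then gives $B_R\subset\supp w_p$ for $p$ close to $2$. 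I expect this convergence step, specifically obtaining uniform estimates as $p\to2$ when the natural scaling degenerates, to be the main obstacle.

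\textbf{Part 3.} For the least energy nodal solution, minimize $J_p$ over $\{u^\pm\ne0\}$ in the sublinear sense, i.e.\ minimize $J_p(u^+)+J_p(u^-)$ with each part negative. Connectedness of $\o$ ensures that the two pieces can both lie in $\o$ and produce a genuine sign change. For the mountain pass solution, use the two minimizers $\pm w_p$ and a mountain-pass/linking argument in the cone-restricted or Clarke setting; the Palais--Smale condition follows from coercivity and the compactness on $\o$. To show the two solutions differ in a dumbbell $\o_\eps$ converging to two disjoint balls, compare energy levels in the limit. The least energy nodal level tends to $2c$, with one sign on each ball. The mountain pass level between $w_p$ and $-w_p$ instead tends to a level built from the paths across the two balls, and strict inequality in the limit gives distinctness.
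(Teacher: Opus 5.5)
\textbf{Part 1.} Your convexity argument for uniqueness of the ground state does not close, and the repair you sketch does not work. Along $\rho_t=(1-t)u^2+tv^2$ the exterior term $\frac1p\int_{\mathbb{R}^N\setminus\Omega}\rho_t^{p/2}$ is concave, so $t\mapsto J_p$ need not be convex. Strict convexity of the $\Omega$-summand alone therefore does not force $u=v$ on $\Omega$. Unique continuation is in fact false for this equation, since solutions have dead cores. The exterior part would follow from comparison for the monotone problem $-\Delta z=-z^{p-1}$, but only after $u=v$ on $\overline\Omega$ is known.

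The missing idea is to interpolate the $p$-th powers, $\gamma(t)=((1-t)u^p+tv^p)^{1/p}$. Then $\int_{\mathbb{R}^N}Q_\Omega\gamma(t)^p$ is affine in $t$ regardless of the sign of $Q_\Omega$. Meanwhile $|\nabla\gamma(t)|\le((1-t)|\nabla u|^p+t|\nabla v|^p)^{1/p}\le((1-t)|\nabla u|^2+t|\nabla v|^2)^{1/2}$ for $p\le2$, which makes $t\mapsto\|\gamma(t)\|^2$ strictly convex when $u\neq v$. Two distinct nonnegative minimizers are then impossible.

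Your positivity step also assumes $u\not\equiv0$ on every component of $\Omega$. This fails for general nonnegative solutions when $\Omega$ is disconnected, so for the minimizer it must come from minimality. If $u\equiv0$ on a component $\omega$ and $\psi\in\mathcal{C}^\infty_c(\omega)$, then $J_p(u+\lambda\psi)=J_p(u)+\frac{\lambda^2}{2}\|\psi\|^2-\frac{\lambda^p}{p}|\psi|_p^p<J_p(u)$ for small $\lambda>0$.

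\textbf{Part 3.} Minimizing $J_p(u^+)+J_p(u^-)$ over functions with $u^\pm\neq0$ and $J_p(u^\pm)<0$ (or over the nodal Nehari set) cannot give a nodal solution here: the infimum is $\mu_p$ and is not attained. To see this, cut $w$ off in a small ball $B_{2\varepsilon}(x_0)\subset\Omega$, at a cost $O(\varepsilon^{N-2})$, and insert a tiny negative bump in $B_\varepsilon(x_0)$. Both parts have negative energy and the total tends to $\mu_p$. Attainment would produce a sign-changing global minimizer, which is impossible for connected $\Omega$.

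This is why the paper, following \cite{BMPTW}, reverses the order. It first obtains a mountain-pass nodal solution and then minimizes over nodal \emph{critical points}. Attainment follows from (PS) at negative levels and the fact that a nonnegative limit would be $w\ge2\varrho>0$ on $\overline\Omega$. Uniform $\mathcal{C}^{1,\alpha}(\overline\Omega)$ bounds then push $u_k^-$ into $\mathbb{R}^N\setminus\Omega$, and testing with $u_k^-$ gives $\|u_k^-\|^2=-\int|u_k^-|^p\le0$.

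Your mountain-pass sketch also omits two essential steps.
(a) You need $c_{mp}<0$. The straight path $tw$ passes through $0$, and every critical point at level $0$ is trivial because $J_p(u)=(\frac12-\frac1p)\|u\|^2$ on solutions. The paper builds a negative-energy path by joining, through the $p$-interpolation above, a negative-energy path from $-w_0$ to $w_0$ inside one component (from \cite{BMPTW}).
(b) Nodality of the critical point needs $c_{mp}>\mu_p$ (Ekeland, (PS), uniqueness of minimizers up to sign). It also needs uniqueness of \emph{all} nonnegative solutions for connected $\Omega$: a Picone-identity argument shows any nonnegative solution has $\|u\|\ge\|w\|$, hence energy $\le\mu_p$. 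That is where connectedness enters, not in letting ``both pieces lie in $\Omega$''.

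For the dumbbell, the decisive facts are only asserted in your sketch. One needs a uniform bound $J_{\Omega_\delta}\ge c>\mu(\Omega_0)$ on the spheres $\|v\mp w_\delta\|=\varepsilon_*$. One also needs a nodal critical point with energy $\le J_{\Omega_0}(w_1-w_2)=\mu(\Omega_0)$, obtained by minimizing outside those spheres.

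\textbf{Part 2.} The barrier $c(|x|-r)_+^{2/(2-p)}$ centered at the origin vanishes on the inner sphere of your annulus, where $|u|\le 0$ cannot be imposed. There are two ways to fix this.
One is the paper's route: take the decreasing profile $c(t_0-|x|)_+^{2/(2-p)}$ with $t_0$ so large that it dominates $|u|$ on $\overline\Omega$, and compare on the set outside $\overline\Omega$ where $|u|$ exceeds it.
The other is to use small multiples of $|x-x_0|^{2/(2-p)}$ centered at far points $x_0$. This requires uniform decay of $u$, which you must prove, e.g.\ from subharmonicity of $|u|$ outside $\Omega$ and $u\in L^{2^*}$; no decay is ``already known'' for $p<2$.
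Your treatment of the growth of the support as $p\to2^-$ matches the paper's route via convergence to the positive eigenfunction $\phi_1$.
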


The existence of a ground state is obtained by global minimization. On the other hand, the existence of nodal solutions is more delicate and cannot be argued as in the superlinear case. For this, we follow two strategies.  One is to use symmetries to obtain the existence of equivariant sign changing solutions (see Theorem~\ref{thm:main:3} below), the second is to adapt the approach in \cite{BMPTW}, where a sublinear Dirichlet Lane-Emden problem is studied. For this, we show first the existence of a nodal solution of mountain pass type and, once that it is established that the set of nodal solutions is nonempty, we prove that there is one solution that has least energy among nodal solutions.  Furthermore, again following \cite{BMPTW}, we show that these two nodal solutions are different if $\Omega$ is a suitable dumbbell domain. We conjecture that, if $\Omega$ is a ball, then these two nodal solutions should coincide, however this is left as an open problem. 

The fact that all solutions have compact support follows from a comparison argument together with the fact that sublinear problems admit \emph{dead core} solutions that can be used as barriers, see Lemma~\ref{Linfty:lem}. To see that the support of the ground state grows as $p\to 2^-$ we use an asymptotic analysis of the behavior of the solution of an auxiliary nonlinear eigenvalue problem, see \cite[Theorem 3.4]{CMSS} and Theorem~\ref{thm:supp} below.  

The compactness of the support of solutions has been studied in other indefinite sublinear problems, see, for instance, \cite{Schatzman, spruck, AL, BPT, BPT2, BR25} and the references therein. However, we believe that problem \eqref{sublinear} presents an interesting new perspective on this kind of problem, where a more transparent and precise analysis can be done in terms of the topology and the geometry of $\Omega$. A general question that captures this idea is the following 
\begin{center}
    What is the influence that $p$ and $\Omega$ have on the shape of the support of the ground state?
\end{center}

In this direction, we have the following result showing that starshapedness of $\Omega$ forces the support to be starshaped too, and if $\Omega$ is strictly starshaped, then the free boundary is Lipschitz.
\begin{theorem}\label{thm:starshaped:intro}
    Let $p\in(1,2)$ and let $\Omega\subset \rn$ be a smooth open bounded subset of $\rn$, let $w$ be the nonnegative ground state of \eqref{sublinear} and let $K$ denote its support.
    \begin{enumerate}
        \item If $\Omega$ is starshaped, then $K$ is starshaped.
        \item If $\Omega$ is strictly starshaped, then $\partial K$ is locally the graph of a Lipschitz function.
    \end{enumerate}
\end{theorem}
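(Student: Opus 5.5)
The plan is to argue through the radial-derivative function $v(x):=x\cdot\nabla w(x)$ combined with uniqueness of the nonnegative ground state (Theorem~\ref{thm:main}(1)). Assume $\Omega$ is starshaped with respect to the origin. For $\lambda>1$ consider the rescalings
\[
w_\lambda(x):=\lambda^{-\frac{2}{2-p}}\,w(\lambda x).
\]
A direct computation gives $-\Delta w_\lambda = Q_{\Omega/\lambda}\, w_\lambda^{p-1}$. Since $\Omega/\lambda\subset\Omega$ by starshapedness, $Q_{\Omega/\lambda}\le Q_\Omega$, so $w_\lambda$ is a subsolution of \eqref{sublinear}.

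First step: compare $w_\lambda$ with $w$. The preferred route is variational. For a sublinear problem the functional $J(u)=\frac12\int|\nabla u|^2-\frac1p\int Q_\Omega|u|^p$ restricted to nonnegative functions becomes convex under the substitution $u=\rho^{1/2}$ on the set where the weight is positive. Instead of relying on that, I would use a Brezis--Oswald/Díaz--Saa type argument. Test the equations for $w$ and $w_\lambda$ with $(w_\lambda^2-w^2)^+/w_\lambda$ and $(w_\lambda^2-w^2)^+/w$, subtract, and use the sign of $Q$ together with $Q_{\Omega/\lambda}\le Q_\Omega$ to conclude $(w_\lambda-w)^+=0$.

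The negative part of $Q$ needs care. On $\{w_\lambda>w\}$ the absorption term satisfies $-w_\lambda^{p-1}/w_\lambda \ge -w^{p-1}/w$, i.e.\ $-t^{p-2}$ is increasing, so it has the right monotonicity. The positive part is handled by the fact that $t^{p-2}$ is decreasing. Hence the whole right-hand side $Q\,t^{p-2}$ has a sign that works against $w_\lambda>w$ only in a way that can be absorbed. Because the supports are compact (Theorem~\ref{thm:main}(2)) and $w>0$ on $\overline\Omega$, all test functions are admissible once the division by $w$ is regularized by $w+\varepsilon$ and we let $\varepsilon\to0$.

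Given $w_\lambda\le w$ for all $\lambda>1$, we obtain $w(\lambda x)\le \lambda^{2/(2-p)}w(x)$. Hence if $w(x)=0$ then $w(\lambda x)=0$ for every $\lambda>1$. Therefore $\{w>0\}$ is starshaped, and so is its closure $K$. Differentiating in $\lambda$ at $\lambda=1$ also yields $x\cdot\nabla w\le \frac{2}{2-p}w$.

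For part 2, assume strict starshapedness, i.e.\ $x\cdot\nu>0$ on $\partial\Omega$. I expect to upgrade the comparison to a strict one: for $\lambda>1$ close to $1$, $w_\lambda< w$ strictly inside $\{w>0\}$. Then I would show $x\cdot\nabla w\le -c<0$ in a neighbourhood of the free boundary within $\{w>0\}$, after suitably normalizing via the function $v=x\cdot\nabla w-\frac{2}{2-p}w$, which satisfies a linearized equation $-\Delta v=(p-1)Q_\Omega w^{p-2}v$ away from $\partial\Omega$. Heuristically this works because near $\partial K$ the solution behaves like $c\,\mathrm{dist}^{2/(2-p)}$. A uniform negativity of the radial derivative on level sets $\{w=t\}$, $t$ small, means each ray from the origin crosses these level sets transversally at a uniform angle, i.e.\ the level sets are radial graphs with a uniform cone condition. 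Passing $t\to0$, $\partial K$ is a Lipschitz radial graph.

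The main obstacle is this nondegeneracy near $\partial K$, where the equation degenerates because $w^{p-2}$ blows up. I would first try to obtain it by a strong-maximum-principle argument for $w-w_\lambda$ combined with the optimal growth estimate $w\sim\mathrm{dist}(\cdot,\partial K)^{2/(2-p)}$. That growth estimate would be proved through barriers of the dead-core type from Lemma~\ref{Linfty:lem}, giving uniform cones of positivity and of vanishing along rays.
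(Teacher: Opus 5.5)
Your reduction is sound, and the inequality you aim for is exactly the one the paper proves. Indeed, $w_\lambda\le w$ for all $\lambda>1$ is equivalent to $x\cdot\nabla w\le\frac{2}{2-p}w$, i.e.\ to $\zeta=x\cdot\nabla\phi-2\phi\le 0$ with $\phi=w^{2-p}$.

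The gap is in the comparison step. A D\'{\i}az--Saa/Brezis--Oswald argument with exponent $2$ needs $t\mapsto Q(x)t^{p-2}$ to be nonincreasing. This holds in $\Omega$ but fails in $\mathbb{R}^N\setminus\Omega$, where $t\mapsto -t^{p-2}$ is increasing; the monotonicity you invoke goes the wrong way for this argument. Concretely, test with $(w_\lambda^2-w^2)^+/w_\lambda$ and $(w_\lambda^2-w^2)^+/(w+\varepsilon)$ and let $\varepsilon\to0$. The right-hand side then contains the contribution $\int_{\{w_\lambda>w>0\}\setminus\Omega}(w^{p-2}-w_\lambda^{p-2})(w_\lambda^2-w^2)$, which is positive. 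The resulting identity only says that this term dominates the others, and nothing in your argument absorbs it.

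The paper avoids comparison altogether. It shows that $-\Delta\zeta+b\cdot\nabla\zeta+c\zeta=-2(2-p)(\nu_\Omega\cdot x)\,\delta_{\partial\Omega}\le 0$ in $\Theta=\{w>0\}$ with $c\ge0$. It gets $\zeta=0$ on $\partial\Theta$ from the Bernstein bound $|\nabla w|^2\le Cw^p$ (Lemma~\ref{lem:bdsSpruck}) and then applies the weak maximum principle.

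Your route can still be repaired by using the hidden convexity with exponent $p$ instead of $2$; with $\rho=u^p$ the term $\int Q_\Omega|u|^p$ becomes linear, cf.\ Lemma~\ref{lem:2}. Test with $(w_\lambda^p-w^p)^+/w_\lambda^{p-1}$ and $(w_\lambda^p-w^p)^+/(w+\varepsilon)^{p-1}$, and use the generalized Picone inequality as in the proof of Theorem~\ref{uniquenessOmega}. As $\varepsilon\to0$, the gradient terms are bounded below by $\int_{\{w_\lambda>w\}}(|\nabla w_\lambda|^{2-p}-|\nabla w|^{2-p})(|\nabla w_\lambda|^{p}-|\nabla w|^{p})\ge 0$. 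The reaction terms converge to $\int(Q_{\Omega/\lambda}-Q_\Omega\chi_{\{w>0\}})(w_\lambda^p-w^p)^+\le 0$. This uses only $\Omega/\lambda\subset\Omega$ and $w>0$ on $\overline{\Omega}$, with no monotonicity in $t$. Hence $w_\lambda=0$ a.e.\ on $\{w=0\}$, i.e.\ $\lambda^{-1}\Theta\subset K$ for every $\lambda>1$, which is part 1.

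Part 2 does not work as written either. Since $w\in C^1$, $w\ge 0$ and $w=0$ on $\partial K$, we have $\nabla w=0$ on $\partial K$. So both $x\cdot\nabla w$ and $v=x\cdot\nabla w-\frac{2}{2-p}w$ tend to $0$ at the free boundary, and no bound of the form $x\cdot\nabla w\le -c<0$ (or the same for $v$) can hold near $\partial K$. The uniform-angle property you actually need, and the nondegeneracy $w\gtrsim \mathrm{dist}(\cdot,\partial K)^{2/(2-p)}$, are only asserted.

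The missing idea is much simpler and is what Corollary~\ref{Cor:starshaped} uses. Strict starshapedness gives $\nu_\Omega\cdot(x-x_0)>0$ on $\partial\Omega$ for all $x_0$ in a small ball $B_\varepsilon(0)$, so $\Omega$ is starshaped with respect to each such $x_0$. Running part 1 with center $x_0$, i.e.\ with $\lambda^{-2/(2-p)}w(x_0+\lambda(x-x_0))$, shows that $K$ is starshaped with respect to every point of $B_\varepsilon(0)$. Such a set satisfies uniform interior and exterior cone conditions, so $\partial K$ is a Lipschitz radial graph.
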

The proof of Theorem~\ref{thm:starshaped:intro} relies on a Bernstein-type argument inspired by \cite{spruck}.

We emphasize that not all sublinear problems admit solutions with compact support; see, for example, \cite{SW18}, where a unique continuation principle is shown for some sublinear problems.  This highlights the crucial role of the condition $Q_\Omega=-1$ in $\mathbb R\backslash \Omega$ to guarantee the existence of dead core solutions for \eqref{sublinear}.  

Theorem~\ref{thm:main} states that, for every smooth open bounded subset $\Omega$, the nonnegative ground state is unique. This uniqueness does not hold in general for nonnegative solutions, this depends on the connectedness of $\Omega$, on the distance between the connected components, and on the value of the exponent $p$.  Our next result describes the multiplicity or uniqueness of nonnegative solutions in this setting.  We use $\operatorname{Isom}(\rn)$ to denote the group of all isometries of $\rn$ (linear isometries, translations, and their compositions).

\begin{theorem}\label{thm:main:2}
    Let $p\in[1,2)$ and let $\Omega\subset \rn$ be a smooth open bounded subset of $\rn$.
    \begin{enumerate}
        \item If $\Omega$ is connected, then \eqref{sublinear} has a unique nonnegative solution given by the ground state.
\item If $\Omega\subset \rn$ is not connected, then there is $p_0=p_0(\Omega)\in(1,2)$ so that \eqref{sublinear} has a unique nonnegative solution if $p\in (p_0,2)$.
\item Let $\theta_i\subset \rn$ be a connected smooth open bounded subset for $i=1,\ldots,\ell,$ $\ell\in\mathbb N$, and let
\begin{align*}
\Omega:=\bigcup_{i=1}^\ell \omega_i,\qquad \text{ where $\omega_i:=g_i\theta_i$ for some $g_i\in \operatorname{Isom}(\rn)$.} 
\end{align*}
There is $d=d(p,\theta_1,\ldots,\theta_\ell)>0$ so that, if $\dist(\omega_i,\omega_j)>d$ for $i\neq j$, then \eqref{sublinear} has exactly $2^\ell-1$ different nonnegative solutions.
    \end{enumerate}
\end{theorem}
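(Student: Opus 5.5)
The plan is to rest everything on a uniqueness principle for sublinear problems of Brezis--Oswald/Díaz--Saa type, carried out in the variational framework with the energy
\[
J_p(u)=\tfrac12\int_{\rn}|\nabla u|^2-\tfrac1p\int_{\rn}Q_\o|u|^p ,
\]
together with the compact support and positivity statements of Theorem~\ref{thm:main}.

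\textbf{Item 1 (connected $\Omega$).} I would first show that any nonnegative solution $u\not\equiv 0$ is positive on $\overline\Omega$. The support $K$ of $u$ is compact by Theorem~\ref{thm:main}(2). Inside $\Omega$ we have $-\Delta u\ge 0$, so by the strong maximum principle on the connected set $\Omega$, either $u>0$ on $\Omega$ or $u\equiv0$ there. If $u\equiv 0$ on $\Omega$, then $u$ satisfies $-\Delta u=-f_p(u)\le 0$ everywhere. Testing with $u$ then gives $\int|\nabla u|^2+\int u^p=0$, hence $u\equiv0$. So $u>0$ in $\Omega$, and by Hopf's lemma also on $\partial\Omega$.

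Given two nonnegative nontrivial solutions $u,v$, I would use the Díaz--Saa convexity argument on the common region. Since $t\mapsto \int|\nabla t^{1/2}|^2$ is convex along $w=u^2,v^2$, one obtains
\[
\int\Big(\frac{-\Delta u}{u}-\frac{-\Delta v}{v}\Big)(u^2-v^2)\ge 0 .
\]
Here $(-\Delta u)/u=Q_\Omega u^{p-2}$, which is strictly decreasing in $u$ on $\Omega$. This yields $u=v$ on $\Omega$. The difficulty is the region outside $\Omega$. There, on $\{u>0\}$, we have $-\Delta u/u=-u^{p-2}$, which is \emph{increasing} in $u$, so the sign has the wrong direction.

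An alternative I would try first is an energy/ordering argument. Let $w_p$ be the ground state. For a nonnegative solution $u$, the truncation $\min(u,w_p)$ and the maximum $\max(u,w_p)$ satisfy, by the usual lattice identity,
\[
J_p(\min)+J_p(\max)=J_p(u)+J_p(w_p).
\]
Combining this with the Nehari identity $J_p(u)=(\tfrac12-\tfrac1p)\int|\nabla u|^2<0$ should produce uniqueness, once one establishes that any nonnegative nontrivial solution is a \emph{global} minimizer. For connected $\Omega$ I would argue this by showing that the restriction of $J_p$ to nonnegative functions is convex in the variable $w=u^p$ or $w=u^2$ on $\Omega$. I expect this reduction, i.e.\ handling the repulsive exterior where the nonlinearity has the wrong sign, to be the main obstacle. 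There I would exploit that the exterior equation is a dead-core problem whose solution is determined by its boundary values on $\partial\Omega$ via comparison (since $-\Delta z+z^{p-1}=0$ has a monotone nonlinearity, uniqueness for the exterior Dirichlet problem holds). Thus $u=v$ on $\Omega$ forces $u=v$ everywhere.

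\textbf{Item 3 (far-apart components).} For each nonempty $I\subset\{1,\dots,\ell\}$, let $w_I$ be the ground state for $\Omega_I=\bigcup_{i\in I}\omega_i$. By item~1 and Lemma~\ref{Linfty:lem}, the support of $w_I$ lies in a $\rho$-neighbourhood of $\Omega_I$, with $\rho$ depending only on $p$ and the $\theta_i$, via uniform $L^\infty$ bounds and the dead-core barrier. Choosing $d>2\rho$, the function $w_I$ vanishes near $\omega_j$ for $j\notin I$. On that neighbourhood it solves the full problem, because $w_I=0$ there and $f_p(0)=0$. These give $2^\ell-1$ distinct solutions. Conversely, a nonnegative solution $u$ has support splitting into components near each $\omega_i$. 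Arguing as in item~1, $u$ either vanishes near $\omega_i$ or is positive on $\omega_i$, so $u$ coincides with some $w_I$ by the uniqueness of item~1 applied componentwise. Here the separation is needed so that the pieces do not interact.

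\textbf{Item 2 (disconnected $\Omega$, $p$ near $2$).} I would show that, for $p$ near $2$, any nonnegative nontrivial solution is positive on all of $\overline\Omega$; item~1's argument then applies. Suppose instead that $u_p$ vanishes on a component $\omega_j$ along $p_n\to2$. After normalizing, $u_{p_n}/\|u_{p_n}\|$ converges, by the asymptotic analysis of \cite[Theorem 3.4]{CMSS} used for Theorem~\ref{thm:supp}, to a nonnegative eigenfunction of $-\Delta v=\Lambda Q_\Omega v$. That eigenfunction must be the principal one, which is positive everywhere. This contradicts the vanishing on $\omega_j$, obtained as a uniform limit of the dead zones, because the supports grow to $\rn$.
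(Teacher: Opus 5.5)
There is a genuine gap: the core uniqueness step in Item~1 is never proved, and Items~2 and~3 depend on it.

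\textbf{Item 1.} The D\'iaz--Saa inequality is global, so it cannot give ``$u=v$ on $\Omega$''. Its integrand is $\le 0$ on $\Omega$ and $\ge 0$ outside. Localizing to $\Omega$ produces boundary terms on $\partial\Omega$, where $u-v$ and $\partial_\nu(u-v)$ need not vanish. Also, $u^2/v$ and $v^2/u$ are undefined because the supports may differ. The lattice identity is never turned into an argument. The sentence ``once one establishes that any nonnegative nontrivial solution is a global minimizer'' assumes exactly what must be proved. Your exterior step is correct (test with $(u-v)^+$, which is supported in $\mathbb{R}^N\setminus\Omega$, and use monotonicity of $f_p$), but it only helps once $u=v$ on $\Omega$ is known.

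The missing idea is a \emph{one-sided} comparison with the ground state $v=w_p$, in which the exterior actually has the \emph{favourable} sign. For $p>1$, test the equation of $u$ with $v^p/(u+\varepsilon)^{p-1}$. The generalized Picone inequality $\nabla u\cdot\nabla\bigl(v^p/(u+\varepsilon)^{p-1}\bigr)\le|\nabla u|^{2-p}|\nabla v|^p$ and H\"older give $\int Q_\Omega v^p\bigl(u/(u+\varepsilon)\bigr)^{p-1}\le\|u\|^{2-p}\|v\|^p$. As $\varepsilon\to0$ the left side tends to $\int_\Omega v^p-\int_{\{u>0\}\setminus\Omega}v^p\ge\int Q_\Omega v^p=\|v\|^2$. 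This holds because $u>0$ on $\Omega$, and discarding part of the exterior only increases the integral. Hence $\|v\|\le\|u\|$, so $I_p(u)=-\frac{2-p}{2p}\|u\|^2\le I_p(v)=\mu_p$. Thus $u$ is a minimizer, and $u=v$ by uniqueness of the nonnegative minimizer (Lemma~\ref{lem:2}). For $p=1$, simply test with $v$: $\|v\|^2=\int Q_\Omega v\le\int Q_\Omega\sgn(u)v=\int\nabla u\cdot\nabla v\le\|u\|\|v\|$. Your ``convexity in $u^p$'' idea is this same mechanism if applied to the whole functional rather than ``on $\Omega$''. The only term not cancelled by the equation comes from $\{u=0\}\subset\mathbb{R}^N\setminus\Omega$, and it equals $+\frac1p\int_{\{u=0\}}v^p\ge0$.

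\textbf{Item 2.} There is an additional gap here. \cite[Theorem 3.4]{CMSS} and Theorem~\ref{thm:supp} concern ground states only. They say nothing about an arbitrary solution $u_{p_n}$ vanishing on $\omega_j$. You would need your own compactness analysis of $u_{p_n}/\|u_{p_n}\|$, including control of $\|u_{p_n}\|^{p_n-2}$ and nontriviality of the limit, which you do not give. Also, ``item 1's argument then applies'' requires uniqueness among solutions positive on a \emph{disconnected} $\overline\Omega$, which your Item~1 does not address.

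The paper first classifies all nonnegative solutions (Theorem~\ref{thm:solset}). A nonnegative solution is positive exactly on the components of some $\Omega_{\mathcal J}$ and solves the problem with $Q_{\Omega_{\mathcal J}}$. By the one-sided argument above, which needs positivity on $\Omega_{\mathcal J}$ but not connectedness, it is the ground state of that problem. Item~2 then follows by applying Theorem~\ref{thm:supp} to the finitely many $\Omega_{\mathcal J}\neq\Omega$. Their ground states eventually have supports meeting every $\omega_i$, which contradicts \eqref{Jcond}.

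\textbf{Item 3.} The same classification gives the upper bound $2^\ell-1$, so no splitting argument and no uniform support bound for arbitrary solutions is needed. For the lower bound it is simpler to sum isometric copies of the fixed solutions for the $\theta_i$, whose support diameters do not depend on the $g_i$. Your radius $\rho$ for the ground state of $\Omega_I$ would instead require an $L^\infty$ bound uniform in the positions and a locally centred barrier, since the barrier in Lemma~\ref{compact:sup:lem} is centred at the origin with $t_0=t_0(\Omega)$.
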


The proof of Theorem~\ref{thm:main:2} is based on some recent asymptotic analysis results as $p\to 2$ obtained in \cite{CMSS}, on hidden convexity arguments (see, for instance, \cite{BFMST18}), and on the use of a generalized Picone's identity \cite{BF14} as in \cite{BR25}. For the last claim in Theorem~\ref{thm:main:2}, we analyze carefully the supports of nonnegative solutions. We also note that, under these conditions, the construction of nodal solutions having the same energy as the ground state follows easily from the proof (see the proof of Theorem~\ref{thm:many}). 

We also show the existence of symmetric nodal solutions when 
$\Omega$ is symmetric via the principle of symmetric criticality. The following is a direct consequence of Theorem~\ref{thm:symm}, where more details on the shape of the solutions are given. 

\begin{theorem}\label{thm:main:3}
If $\o$ satisfies
\begin{equation*}
(\e^{\mathrm{i}\theta}z,y)\in\o\qquad\text{for every \ }\theta\in [0,2\pi], \ (z,y)\in\o, \ \text{ \ where \ }z\in\cc, \ y\in\r^{N-2},
\end{equation*}
then the problem \eqref{sublinear} has a sequence $(v_k)$ of sign-changing solutions such that $v_k\to 0$ strongly in $X^p$.
\end{theorem}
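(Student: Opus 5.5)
We plan to apply the principle of symmetric criticality combined with a Clark-type / symmetric mountain-pass theorem for even functionals with a sublinear structure (in the spirit of Kajikiya's version of Clark's theorem). This produces infinitely many critical points accumulating at $0$.

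\textbf{Setting up the functional.} Write $z=(z_1,z_2)\in\r^2\cong\cc$ and let $G$ be a group acting on $X^p$ so that fixed points of the action are automatically sign-changing. The natural choice is the finite cyclic group generated by rotation by $\pi$ in $z$, namely $(\tau u)(z,y):=-u(\e^{\mathrm{i}\pi}z,y)$, or more generally $\z_{2m}$-type actions $u\mapsto -u(\e^{\mathrm{i}\pi/m}z,y)$. By the assumed invariance of $\o$ under rotations in $z$, the weight $Q_\o$ is invariant under these rotations. Hence the energy
\[
J(u)=\frac12\irn|\nabla u|^2-\frac1p\irn Q_\o|u|^p
\]
is $G$-invariant. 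Any nontrivial $u$ in the fixed space $X^p_G$ satisfies $u(-z,y)=-u(z,y)$, so it must change sign. For $p\in(1,2)$, $J$ is $C^1$ on $X^p$. For $p=1$ it is only locally Lipschitz, so we would work instead with Clarke critical points, or approximate; I expect the paper to handle $p=1$ through the subdifferential of $\int|u|$ and the identification $\partial|t|=\sgn(t)$ a.e. Symmetric criticality (Palais) then shows that critical points of $J|_{X^p_G}$ are critical points of $J$, and hence solve \eqref{sublinear}.

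\textbf{Checking the hypotheses of Kajikiya's theorem.} The theorem requires three properties.
\begin{enumerate}
\item $J$ is even, bounded below and coercive on $X^p_G$, with $J(0)=0$. Coercivity follows from the norm on $X^p$: the term $-\frac1p\int Q_\o|u|^p=\frac1p\int_{\rn\smallsetminus\o}|u|^p-\frac1p\int_\o|u|^p$ controls the $L^p$ tail. The remaining quantity $\int_\o|u|^p$ is bounded by $C\|\nabla u\|_2^p$ via Hölder on the bounded set $\o$ and Sobolev, and since $p<2$ this is dominated by the Dirichlet term.
\item The Palais--Smale condition holds. The $L^p(\o)$ term is compact because $\o$ is bounded. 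A PS sequence is bounded by coercivity. Then the difference of the identity $\langle J'(u_n)-J'(u),u_n-u\rangle\to0$, together with monotonicity of $t\mapsto|t|^{p-2}t$ on the exterior term, gives strong convergence of both $\nabla u_n$ and the exterior $L^p$ part.
\item For every $k$ there is a $k$-dimensional symmetric subset on which $\sup J<0$. Take $k$ linearly independent $G$-equivariant functions in $C_c^\infty(\o)$. Such functions exist because $\o$ contains an open annular region around the $y$-axis, and we can use functions of the form $\phi(|z|,y)\cos(j\arg z)$ with $j$ odd. On their span, which is finite-dimensional, all norms are equivalent. Therefore $J(tu)\le \frac{t^2}{2}C-\frac{t^p}{p}c<0$ for small $t$ on the unit sphere.
\end{enumerate}
Kajikiya's theorem then gives a sequence of critical points $v_k\neq0$ with $J(v_k)<0$ and $v_k\to0$ in $X^p$.

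\textbf{Main obstacle.} The delicate step is the PS condition on the unbounded exterior region together with the case $p=1$. The exterior term $\frac1p\int_{\rn\smallsetminus\o}|u|^p$ is not compact, but it enters with a favorable (convex) sign. I would exploit this convexity: the lower semicontinuity of the exterior term, combined with the PS identity, yields norm convergence. For $p=1$ I would use the nonsmooth version of Clark's theorem for locally Lipschitz even functionals. Alternatively, one could obtain $v_k$ for $p\to1^+$ with uniform bounds, but the nonsmooth theorem is more direct.
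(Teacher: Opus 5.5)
Your argument is correct in outline, and essentially complete for $p\in(1,2)$, but it is not the paper's route.

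\textbf{The paper's route.} The paper uses no genus theory. For each $k$ it takes the cyclic group $G_{2^k}$ generated by the rotation $\varrho_{2^k}$ of angle $2\pi/2^k$ in $z$, with $\phi_{2^k}(\varrho_{2^k})=-1$. It then simply \emph{minimizes} $I_p$ over $\phi_{2^k}$-equivariant functions; the minimum is negative, as one sees by testing with a small multiple of the first $\phi_{2^k}$-equivariant Dirichlet eigenfunction of $\Omega$. The minimizers $v_k$ are pairwise distinct because $\varrho_{2^\ell}^{2^{\ell-k}}=\varrho_{2^k}$. The convergence $v_k\to0$ is proved by hand:
\begin{itemize}
\item If $I_p(v_k)\to 0$, the paper uses $I_p(v_k)=\frac{p-2}{2p}\|v_k\|^2$ together with Lemma~\ref{bounded Xp}.
\item If $I_p(v_k)\to c<0$, Lemma~\ref{lem:ps} gives $v_k\to v$ in $X^p$. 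Then $v\neq0$ is impossible: a $\phi_{2^k}$-equivariant function has positive and negative parts of equal measure in every $2\pi/2^m$-sector ($k>m$), and by Egorov's theorem this contradicts $v\ge\eps$ on some sector.
\end{itemize}

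\textbf{What each route buys.} The paper's argument needs only minimization and (PS)$_c$ for $c<0$. So for $p=1$ it needs, on top of Lemma~\ref{lem:ps}, just Clarke's ``minimizers are critical points'' and nonsmooth symmetric criticality. It also gives extra structure: $v_k$ has least energy among $\phi_{2^k}$-equivariant solutions, with increasingly fine angular sign pattern.

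Your route has three obligations you should address explicitly:
\begin{itemize}
\item It needs the full Palais--Smale condition for Kajikiya's hypothesis. This is true: coercivity gives boundedness at every level, and the rest of the proof of Lemma~\ref{lem:ps} never uses $c<0$.
\item You should rule out the alternative in Kajikiya's theorem with nontrivial critical points at level $0$. This is immediate, since nonzero critical points satisfy $J(u)=\frac{p-2}{2p}\|u\|^2<0$.
\item For $p=1$ you must supply an equivariant deformation/genus theory for locally Lipschitz functionals. This is substantially heavier, and your proposal does not pin it down.
\end{itemize}

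On the other hand, your approach is more robust. The group enters only to force sign change. By Theorem~\ref{thm:solset} there are only finitely many nontrivial nonnegative (hence also nonpositive) solutions, all nonzero. So any sequence of nonzero solutions tending to $0$ in $X^p$ is eventually sign-changing. Applying Kajikiya's theorem on all of $X^p$ would therefore give the statement, at least for $p>1$, without any rotational symmetry of $\Omega$.
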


We note that Theorems~\ref{thm:main},~\ref{thm:main:2}, and~\ref{thm:main:3} also hold for $p=1$, namely, for the sign nonlinearity.  This setting is particularly challenging, because the associated energy functional is not of class $\cC^1$.   As a consequence, some tools from nonsmooth critical point theory are needed to define what is meant by critical point or by Palais-Smale sequence. However, the case $p=1$ is especially interesting, in part because it allows the computation of explicit solutions in the radial case (see Section~\ref{sec:exp}).

Observe that, if $K$ denotes the support of the ground state of \eqref{sublinear} with $p=1$, then integrating by parts \eqref{sublinear} yields the following compatibility condition
\begin{align*}
    |K| = 2 |\Omega|.
\end{align*}
This implies, for instance, that $K$ is not a ball if $\Omega$ is a thin annulus or an eccentric ellipse. 

An interesting point of view of \eqref{sublinear} for the case $p=1$ comes from the setting of Serrin-type overdetermined torsion problems.  We refer to Section~\ref{sec:over} for a broader discussion in this regard. 

\medskip

To close this introduction, we state some open questions that we find interesting.
\smallskip 

\textbf{Open Questions:}
\begin{enumerate}
    \item Is it true that the least energy nodal solution and the mountain pass type nodal solutions are the same if $\Omega$ is a ball?
\item What is the asymptotic shape of the support of the ground state as $p \to 2^{-}$? In particular, does it converge (in an appropriate sense) to an expanding ball, independently of the geometry of $\Omega$? 
   \item What is the asymptotic behavior of the mountain pass type nodal solution and of the least energy nodal solution as $p\to 2^-$?  Do they both converge to the second eigenfunction?
\end{enumerate}

Regarding the Open Question 1, we mention that this is an analogue of an open question in \cite{BMPTW}, where sublinear Dirichlet problems where studied.

\medskip

The paper is organized as follows. In Section~\ref{sec:vs} we introduce the variational frameworks and the notions of solution, including the nonsmooth setting required for the case $p=1$.  Section~\ref{sec:gs} is devoted to nonnegative ground states and nonnegative solutions, existence, uniqueness, positivity in $\Omega$, and the asymptotic behavior of nonnegative ground states as $p\to2^{-}$. Here we also show that all solutions have compact support via a comparison argument.  Section~\ref{sec:gs} also addresses the multiplicity or uniqueness of nonnegative solutions when $\Omega$ has several connected components and it contains the proof of Theorem~\ref{thm:starshaped:intro}. 
In Section~\ref{sec:nod} we construct nodal solutions, including least energy nodal solutions, mountain-pass type nodal solutions, and symmetric nodal solutions.  
In Section~\ref{sec:proofs} we give the proof of the main results stated in the Introduction. 
In Section~\ref{sec:over} we discuss the connection of \eqref{sublinear} with overdetermined torsion-type problems when $p=1$. Finally, in  
Section~\ref{sec:exp}, we give some explicit solutions of \eqref{sublinear} in the radial setting for $p=1.$

\section{The variational setting}\label{sec:vs}

Let $p \in [1, 2)$.  We endow the Banach space $X^p:=D^{1,2}(\rn)\cap L^{p}(\rn)$ with the norm
\begin{align*}
   \|u\|_{X^p}:=\|u\|+|u|_p, 
\end{align*}
where $|\cdot|_p$ and $\|\cdot\|$ denote the usual norms in $L^p(\rn)$ and $D^{1,2}(\rn)$, respectively. 

We say that $u \in X^p$ is a solution to \eqref{sublinear} if 
\begin{equation}\label{solution}
\int_{\R^N} \nabla u \nabla \varphi = \int_{\R^N} Q_\Omega \, f_p(u)\varphi \quad \text{ for all } \varphi \in \mathcal{C}_c^\infty(\R^N),
\end{equation}
with $f_p$ as in \eqref{fp}.

 Let $I_p:X^p\to \r$ be given by
\begin{align*}
    I_p(u):=\frac{1}{2}\|u\|^2-\frac{1}{p}\int_{\rn}Q_\Omega |u|^p.
\end{align*}

Notice that $I_p$ is of class $\cC^1$ if $p>1$ and $u$ is a critical point for $I_p$ if 
\[ I_p'(u) \varphi= \int_{\R^N} \nabla u \nabla \varphi - \int_{\R^N} Q_\Omega |u|^{p-2} u \varphi=0 \quad \text{ for any } \varphi \in X^p. \]
Therefore, the critical points of $I_p$, with $p >1$, are the  solutions of \eqref{sublinear}.

If $p=1$ the energy functional $I_1$ is not of class $\cC^1$. To define the notion of critical points and Palais-Smale sequences we use some concepts from non-smooth critical point theory. 

Let us first recall some definitions and properties from \cite{chang}, see also \cite{ClarkeBook}. 
Let $X$ be a Banach space, and let $F: X \to \R$ be  locally Lipschitz. Its generalized gradient  at a point $x$ is the set
\[ \partial F(x):=\left \{ g \in X^*: \, \langle g, v\rangle  \le \limsup_{y \to x, \lambda \to 0^+}\frac{F(y + \lambda v) - F(y)}{\lambda} \quad \text{ for all } v \in X \right\}.   \]
Note that
\begin{equation}\label{C1}
\partial F(x)=\{F'(x)\}\quad\text{ if }F\text{ is of class }\cC^1.
\end{equation}
Furthermore,
\begin{equation}\label{sum} 
\partial(F_1+F_2)(x) \subseteq\partial(F_1)(x) + \partial(F_2)(x),
\end{equation}
\begin{equation}\label{hom} 
\partial (\lambda F)(x)=\lambda \partial F(x) \quad \text{ for any } \lambda \in \R,
\end{equation} 
and
$\partial F(x)$ coincides with the subdifferential of $F$ at $x$ in the sense of \cite{Ekeland} if $F$ is convex; see \cite[Section 1, Propositions (3)-(5)]{chang}. 

Using these facts, we now prove that
\begin{equation}\label{gradient I 1}
\partial I_1(u) \subseteq \left\{ g: X^1 \to \R:g(\varphi)=\int_{\R^N} \nabla u \nabla \varphi - \int_{\R^N} Q_\Omega s \varphi\text{ for all }\vp\in X^1 \text{ and some } s \in S(u) \right\},
\end{equation}
where
\begin{align*}
S(u):=\left\{s:\rn\to\r\::\: s(x)=1 \text{ if }u(x)>0; s(x)=-1\text{ if } u(x)<0; s(x)\in[-1,1]\text{ if } u(x)=0 
\right\}.
\end{align*}

Indeed, by \eqref{sum} and \eqref{hom}, 
\[ \partial I_1(u) \subseteq \frac 12 \partial (\norm{u}^2) - \partial \left(\int_{\R^N} Q_\Omega |u|\right) \]
and, by \eqref{C1},
\[\partial (\norm{u}^2) = \left \{2\langle u,\cdot\rangle: X^1 \to \R\text{ \ given by \ }2\langle u,\vp\rangle=2\int_{\R^N} \nabla u \nabla \varphi \right \}.  \]
As $u\mapsto \int_{\Omega} |u|$ is convex, its generalized gradient is its subdifferential, that is, for any $u \in X^1$,
\begin{align*}
\partial\left( \int_\Omega |u| \right) = \bigcup_{s\in S(u)}\left \{ g_{\o,s} : X^1 \to \R\text{ \ given by \ } g_{\o,s}(\varphi)=\int_{\Omega} s \varphi\right \}.
\end{align*}
Similarly,
\begin{align*}
\partial\left( \int_{\R^N \setminus \Omega} |u| \right) = \bigcup_{s\in S(u)}\left \{ g_{\R^N \smallsetminus \Omega,s} : X^1 \to \R\text{ \ given by \ } g_{\R^N \smallsetminus \Omega,s}(\varphi)=\int_{\R^N \smallsetminus\Omega} s \varphi\right \}.
\end{align*}
Thus,
\begin{align*} \partial \left(\int_{\R^N} Q_\Omega |u|\right) &\subseteq \partial\left( \int_\Omega |u| \right) - \partial\left( \int_{\R^N \setminus \Omega} |u| \right)
= \bigcup_{s\in S(u)}\left\{ g_s: X^1 \to \R\text{ \ given by \ } g_s(\varphi)=\int_{\R^N} Q_\Omega s \varphi \right\},
\end{align*}
which shows \eqref{gradient I 1}.

Following \cite[Definition 2.1]{chang} we say that
\begin{definition}\label{crit point}
$u \in X^1$ is a critical point of $I_1$ if $0 \in \partial I_1(u)$.
\end{definition}

Then, the following holds.

\begin{lemma}\label{opt sol}
If $u \in X^1$ is a critical point of $I_1$, then $u$ is a solution to \eqref{sublinear}.
\end{lemma}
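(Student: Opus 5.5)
Since $u$ is a critical point, $0\in\partial I_1(u)$. By the inclusion \eqref{gradient I 1}, there is therefore some $s\in S(u)$ with
\[
\int_{\R^N}\nabla u\nabla\varphi=\int_{\R^N}Q_\Omega s\varphi\qquad\text{for all }\varphi\in X^1 .
\]
Because $\cC_c^\infty(\R^N)\subset X^1$, the identity holds for all test functions. Comparing with \eqref{solution}, it remains to show that $Q_\Omega s\varphi$ may be replaced by $Q_\Omega\sgn(u)\varphi$. By the definition of $S(u)$, $s=\sgn(u)$ on $\{u\neq0\}$, so it is enough to prove that $s=0$ a.e. on $Z:=\{u=0\}$.

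For this I use the standard fact that a $W^{1,1}_{loc}$ function has $\nabla u=0$ a.e. on its level sets. More generally, for $u\in D^{1,2}$ and $\varphi\ge0$ one has
\[
\int\nabla u\nabla\varphi=\int_{\{u\neq 0\}}\nabla u\nabla \varphi,
\]
but the zero set can still carry a nontrivial "source". The cleanest route is local: $u\in W^{2,q}_{loc}$ for every $q<\infty$. Indeed, $\Delta u=-Q_\Omega s\in L^\infty$ in the distributional sense, so elliptic $L^q$ regularity applies to $u\in D^{1,2}$. Then $-\Delta u=Q_\Omega s$ a.e., and Stampacchia's theorem applied to $\nabla u\in W^{1,q}_{loc}$ gives $D^2u=0$ a.e. on $\{\nabla u=0\}\supset Z$ (up to a null set). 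Hence $\Delta u=0$ a.e. on $Z$. Since $Q_\Omega=\pm1\neq0$, this forces $s=0$ a.e. on $Z$, so $Q_\Omega s=Q_\Omega\sgn(u)$ a.e. This yields \eqref{solution} with $f_1=\sgn$.

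The main obstacle is justifying the regularity step, namely that the distributional equation $-\Delta u=g$ with $g=Q_\Omega s\in L^\infty$ and $u\in D^{1,2}(\R^N)$ gives $u\in W^{2,q}_{loc}$. I would argue locally: on a ball $B$, subtract the Newtonian potential of $g\chi_{2B}$, which lies in $W^{2,q}(B)$ by Calderón–Zygmund estimates. The difference is harmonic in $2B$, hence smooth. One should also check that $s$ is measurable. If needed, the subdifferential characterization of $\partial(\int|u|)$ gives $s\in L^\infty$ as an element of the dual acting on $L^1$, so it can be taken measurable.
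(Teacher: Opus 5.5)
Your proposal is correct and follows essentially the same route as the paper. You extract $s\in S(u)$ from \eqref{gradient I 1}, obtain $u\in W^{2,q}_{loc}(\R^N)$, apply Stampacchia's theorem twice (first to $u$, then to $\nabla u$) to get $\Delta u=0$ a.e. on $\{u=0\}$, and conclude that $s=0$ there, so $s=\sgn(u)$ a.e.; the only difference is that you justify the $W^{2,q}_{loc}$ regularity by subtracting a Newtonian potential and using Weyl's lemma, whereas the paper uses a Moser-type local bound followed by an elliptic regularity theorem, and both routes are valid.
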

\begin{proof}  
Let $0 \in \partial I_1(u)$, then, by \eqref{gradient I 1}, there exists $s \in S(u)$ such that 
\begin{equation}\label{eq s} \int_{\R^N} \nabla u \nabla \varphi= \int_{\rn} Q_\Omega s \varphi, \quad \text{ for all } \varphi \in \mathcal{C}_c^\infty(\R^N).  \end{equation}
Notice that $|Q_\Omega s| \le 1$, thus we can apply \cite[Lemma B.3]{S00} to conclude that $u \in L^q_{loc}(\R^N)$ for any $q < \infty$. 
Now, elliptic regularity \cite[Theorem B.2]{S00} yields $u \in W^{2, q}_{loc}(\R^N)$ for any $1 < q < \infty$. 
Therefore, we can apply \cite[Lemma 7.7]{GT}
to conclude that in the set $\{ u=0 \}$ one has $\nabla u =0$ a.e. A second application of \cite[Lemma 7.7]{GT} gives 
\begin{align}\label{delta}
\Delta u=0\qquad \text{ a.e. in $\{ u=0 \}$.}
\end{align}
By \eqref{eq s} one has that $-\Delta u= Q_\Omega s$ a.e. in $\rn$; hence, by \eqref{delta}, $s=0$ a.e. in the set $\{u =0\}$, which in turn proves that $s= \sgn(u)$ a.e. in $\rn$, namely $u$ satisfies \eqref{solution} with $p=1$, i.e., $u$ is solution to \eqref{sublinear}. 
\end{proof}

\section{Ground states and properties of the solutions}\label{sec:gs}

\subsection{Existence of ground states}

Let $p \in [1, 2)$ and set  
$$\mu_p:=\inf_{u\in X^p} I_p(u).$$ 
We first show that $\mu_p$ is attained. 

\begin{lemma} \label{lem:minimizer}
\begin{itemize}
\item[$(i)$] $-\infty<\mu_p<0$.
\item[$(ii)$] $\mu_p$ is attained at a nonnegative $w\in X^p$, and any minimizer of $I_p$ is a solution to \eqref{sublinear}.
\end{itemize}
\end{lemma}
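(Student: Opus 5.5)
For $(i)$, the upper bound is the easy half. I take any nonzero $\varphi\in\mathcal C_c^\infty(\Omega)$ and look at
\[
I_p(t\varphi)=\tfrac{t^2}{2}\|\varphi\|^2-\tfrac{t^p}{p}\int_\Omega|\varphi|^p .
\]
Since $p<2$, this is negative for small $t>0$, so $\mu_p<0$.

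For the lower bound I discard the favourable negative part of $Q_\Omega$ and write
\[
I_p(u)\ge \tfrac12\|u\|^2-\tfrac1p\int_\Omega|u|^p .
\]
By Hölder on the bounded set $\Omega$ and the Sobolev inequality,
\[
\int_\Omega|u|^p\le |\Omega|^{1-p/2^*}|u|_{2^*}^p\le C\|u\|^p .
\]
Hence $I_p(u)\ge \tfrac12\|u\|^2-C\|u\|^p$, which is bounded below because $p<2$.

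For $(ii)$ I use the direct method. Let $(u_n)$ be a minimizing sequence. Since $I_p(|u|)=I_p(u)$, I may replace $u_n$ by $|u_n|$ and assume $u_n\ge0$. Writing
\[
I_p(u)=\tfrac12\|u\|^2-\tfrac1p\int_\Omega|u|^p+\tfrac1p\int_{\mathbb R^N\setminus\Omega}|u|^p ,
\]
the coercivity estimate above shows that $\|u_n\|$ is bounded. Then $\int_{\mathbb R^N\setminus\Omega}|u_n|^p\le p\,I_p(u_n)+C\|u_n\|^p$ is bounded, and $\int_\Omega|u_n|^p$ is bounded as well. So $(u_n)$ is bounded in $X^p$.

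Passing to a subsequence, $u_n\rightharpoonup w$ in $D^{1,2}$ and $u_n\to w$ a.e. By Rellich on the bounded set $\Omega$, $u_n\to w$ in $L^2(\Omega)$ and hence in $L^p(\Omega)$. Weak lower semicontinuity of the norm handles the gradient term, and Fatou's lemma handles the term on $\mathbb R^N\setminus\Omega$, which also shows $w\in L^p(\mathbb R^N)$. Together these give $I_p(w)\le\liminf I_p(u_n)=\mu_p$. So $w\in X^p$ is a nonnegative minimizer, and it is nonzero because $\mu_p<0$.

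It remains to show that any minimizer $u$ solves the equation. For $p>1$, $I_p$ is $\mathcal C^1$, so $I_p'(u)=0$ and the equation follows from the identification of critical points with solutions in Section~\ref{sec:vs}. For $p=1$, a local minimizer of a locally Lipschitz functional satisfies $0\in\partial I_1(u)$, by Chang's nonsmooth theory or directly from the definition of the generalized gradient since the $\limsup$ quotient is $\ge0$. Lemma~\ref{opt sol} then shows that $u$ solves \eqref{sublinear}.

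The main obstacle is the compactness step. The nonlocal term lives on all of $\mathbb R^N$, where there is no compact embedding. This is handled by the sign structure of $Q_\Omega$: only the term on the bounded set $\Omega$ needs strong convergence, and the term on the unbounded complement enters with the good sign, so Fatou suffices there. A further routine point is checking that $I_1$ is locally Lipschitz on $X^1$, which follows from $\big||u|_1-|v|_1\big|\le|u-v|_1$.
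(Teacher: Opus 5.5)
Your proof is correct and follows the paper's argument: Hölder–Sobolev coercivity on $\Omega$, the direct method with strong convergence on $\Omega$ and Fatou on $\mathbb{R}^N\setminus\Omega$, the symmetry $I_p(|u|)=I_p(u)$, and $0\in\partial I_1(u)$ at a minimizer (the paper cites Clarke's Proposition~6, which is what you verify directly from the definition) followed by Lemma~\ref{opt sol}. The only deviation is in proving $\mu_p<0$. The paper evaluates $I_p$ at the positive solution of $-\Delta u=u^{p-1}$ in $D^{1,2}_0(\Omega_0)$ (the torsion function when $p=1$) to get $\frac{p-2}{2p}\|u_0\|^2<0$, relying on an external existence result, whereas your scaling of a fixed $\varphi\in\mathcal{C}_c^\infty(\Omega)$ is more elementary and self-contained.
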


\begin{proof}
$(i):$ \underline{Case $p>1$.} Using the Hölder and the Sobolev inequalities we see that
\begin{equation}\label{eq:basic inequality}
\io|u|^p\leq |\o|^{(2^*-p)/2^*}\Big(\irn |u|^{2^*}\Big)^{p/2^*}\leq C\|u\|^p\qquad\text{for every \ } u\in D^{1,2}(\rn).
\end{equation}
Therefore,
\begin{equation}\label{eq:bounded}
I_p(u)\geq \frac{1}{2}\|u\|^2-\frac{1}{p}\io|u|^p\geq\frac{1}{2}\|u\|^2-C\|u\|^p\qquad\text{for every \ } u\in X^p
\end{equation}
and, as $p\in(1,2)$, this implies that $\mu_p>-\infty$.

Let $\o_0$ be a connected component of $\o$ and let $u_0$ be the positive solution to the problem 
\begin{align}\label{o0}
-\Delta u = |u|^{p-2}u, \qquad u\in D^{1,2}_0(\o_0),    
\end{align}
where, as usual, $D^{1,2}_0(\o_0)$ denotes the closure of $\cC^\infty_c(\o_0)$ in $D^{1,2}(\rn)$. Then,
$$\mu_p \leq I_p(u_0)=\frac{1}{2}\|u_0\|^2-\frac{1}{p}\int_{\o_0}|u_0|^p=\frac{p-2}{2p}\|u_0\|^2<0;$$
see, for example, \cite{BFMST18}. 

\underline{Case $p=1$.} The proof follows the same argument. In this case a nonnegative solution of \eqref{o0} is given by the torsion function of $\o_0$, namely, the solution of 
\begin{align*}
-\Delta u = 1, \qquad u\in D^{1,2}_0(\o_0).
\end{align*}

$(ii):$ \underline{Case $p>1$.} Let $(u_k)$ be a sequence in $X^p$ such that $I_p(u_k)\to\mu_p$. It follows from \eqref{eq:bounded} that $(u_k)$ is bounded in $D^{1,2}(\rn)$. So, after passing to a subsequence, $u_k\rh u$ weakly in $D^{1,2}(\rn)$, $u_k\to u$ in $L^p_{loc}(\rn)$ and a.e. in $\rn$. By Fatou's lemma, $u\in L^p(\rn)$ and
\begin{align*}
\mu_p &\leq I_p(u)=\frac{1}{2}\|u\|^2+\frac{1}{p}\int_{\rn\smallsetminus\o}|u|^p-\frac{1}{p}\io|u|^p \\
&\leq \liminf_{k\to\infty}\frac{1}{2}\|u_k\|^2+\liminf_{k\to\infty}\frac{1}{p}\int_{\rn\smallsetminus\o}|u_k|^p-\lim_{k\to\infty}\frac{1}{p}\io|u_k|^p \\
&\leq\lim_{k\to\infty}I_p(u_k)=\mu_p.
\end{align*}
Therefore, $I_p(u)=\mu_p$. Since $I_p(|u|)=I_p(u)$, $w:=|u|$ is a nonnegative minimizer of $I_p$.
A minimizer for $I_p$ is a critical point of $I_p$, hence, it is a solution to \eqref{sublinear}.

\underline{Case $p=1$.} We repeat the same argument to prove the existence of a nonnegative minimizer $w$ of $I_1$. By \cite[Proposition 6]{Clarke}, any minimizer of $I_1$ is a critical point of $I_1$ in the sense of Definition~\ref{crit point}. Thus, Lemma~\ref{opt sol} implies that it is a solution to \eqref{sublinear}. 
\end{proof}

\subsection{Properties of the solutions}

\begin{lemma}\label{Linfty:lem}
Let $u\in X^p$ be a solution of \eqref{sublinear}. Then:
\begin{itemize}
    \item[(i)] If $p >1$,  $u\in \cC^{1,\alpha}_{loc}(\rn)\cap \cC^{2,\alpha}_{loc}(\Omega)\cap \cC^{2,\alpha}_{loc}(\rn\backslash\Omega)$ for some $\alpha\in(0,1)$;
    \item[(ii)]  If $p=1$, $u\in \cC^{1,\alpha}_{loc}(\rn)$  for some $\alpha\in(0,1)$. 
\end{itemize}
Moreover, for any $p \in [1, 2)$, 
there is $C=C(\Omega,p)>0$ such that
\begin{align}\label{localbd}
\|u\|_{\cC^{1,\alpha}(\Omega)}<C.
\end{align}
\end{lemma}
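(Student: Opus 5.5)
The plan has two parts. First I get a bound on $\|u\|$ that holds for every solution with a constant depending only on $\Omega$ and $p$. Then I bootstrap local elliptic estimates, all of whose constants are controlled by that bound.

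\textbf{Step 1: uniform energy bound.} For $p>1$, testing \eqref{solution} with $u$ is legitimate: by density of $\cC^\infty_c(\rn)$ in $X^p$, and because $|u|^{p-2}u\,\vp$ is integrable for $\vp\in X^p$ by Hölder in $L^p$. This gives
\[
\|u\|^2=\int_{\rn}Q_\Omega|u|^p\le \io|u|^p\le C\|u\|^p
\]
by \eqref{eq:basic inequality}, so $\|u\|\le C^{1/(2-p)}=:M(\Omega,p)$. For $p=1$ the same computation works with $\sgn(u)u=|u|$, giving $\|u\|^2\le\io|u|\le C\|u\|$. In both cases $|u|_{2^*}\le S^{-1/2}M$, with a constant independent of the particular solution.

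\textbf{Step 2: bootstrap to $W^{2,q}_{loc}$ and $\cC^{1,\alpha}_{loc}$.} Write $-\Delta u=g$ with $g:=Q_\Omega f_p(u)$. Then $|g|\le |u|^{p-1}\le 1+|u|$, and $g$ is bounded by $1$ when $p=1$. I fix a ball $B_{R}\supset\overline\Omega$ and use the Calderón–Zygmund interior estimate
\[
\|u\|_{W^{2,q}(B_{r})}\le C\big(\|g\|_{L^q(B_{r'})}+\|u\|_{L^q(B_{r'})}\big),\qquad r<r',
\]
as in \cite[Theorem B.2]{S00} or \cite{GT}.

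Starting from $q=2^*$, where $\|g\|_{L^{2^*}(B_{r'})}\le C(1+M)$, I get $u\in W^{2,2^*}$ with a uniform bound. Sobolev embedding then places $u$ in a higher $L^{q}$, and hence $g$ as well. After finitely many steps, on a finite chain of shrinking balls still containing $\overline\Omega$, I reach some $q>N$. Morrey's embedding then gives $\|u\|_{\cC^{1,\alpha}(\overline\Omega)}\le C(\Omega,p)$, which is \eqref{localbd}.

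Translating the ball anywhere in $\rn$ gives the same local argument, so $u\in\cC^{1,\alpha}_{loc}(\rn)$ for all $p\in[1,2)$. Away from $\Omega$ the constants may depend on the location, but that is harmless, since only regularity is claimed there. The discontinuity of $Q_\Omega$ across $\partial\Omega$ is harmless too, because only $g\in L^q$ is used.

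\textbf{Step 3: higher regularity for $p>1$.} The map $t\mapsto|t|^{p-2}t$ is $(p-1)$-Hölder, and $u$ is locally Lipschitz by Step 2. Hence $f_p(u)\in\cC^{0,\beta}_{loc}$ with $\beta=p-1$. On each open set $\Omega$ and $\rn\smallsetminus\overline\Omega$, $Q_\Omega$ is constant, so $g$ is locally Hölder there. Interior Schauder estimates then give $u\in\cC^{2,\alpha}_{loc}(\Omega)\cap\cC^{2,\alpha}_{loc}(\rn\smallsetminus\overline\Omega)$, after shrinking $\alpha$ so that $\alpha\le p-1$.

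\textbf{Main obstacle.} I expect the main difficulty to be keeping every constant independent of the solution. This is exactly why Step 1 is needed: it converts the sublinear structure ($p<2$) into an a priori $D^{1,2}$ bound. Without it, the bootstrap would give only qualitative regularity. The nonsmooth case $p=1$ causes no extra trouble, because $|g|\le1$ directly.
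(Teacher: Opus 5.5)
Your proposal is correct and follows essentially the paper's route: first a solution-independent $D^{1,2}$ bound, then local $W^{2,q}$ estimates, Morrey's embedding, and Schauder estimates. For the bound, the paper writes $\mu_p\le I_p(u)=\frac{p-2}{2p}\|u\|^2$, while you get it directly from testing with $u$ and H\"older--Sobolev, which is the same inequality behind $\mu_p>-\infty$. Your plain Calder\'on--Zygmund bootstrap replaces the paper's Moser iteration; this is legitimate because $|Q_\Omega f_p(u)|\le 1+|u|$ makes Moser iteration unnecessary here.
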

\begin{proof}
Since $u\in X^p$ is a solution of \eqref{sublinear}, then 
\begin{align*}
-\infty<\mu_p \leq I_p(u)=\frac{1}{2}\int_{\rn}|\nabla u|^2-\frac{1}{p}\int_{\rn}Q_\Omega|u|^p
=\left(\frac{1}{2}-\frac{1}{p}\right)\|u\|^2\leq0,
\end{align*}
that is, 
\begin{align*}
\|u\|^2 \in \left[0,\mu_p \left(\frac{2p}{p-2}\right)\right].
\end{align*}
Notice that $|\mu_p|$ is bounded by Lemma~\ref{lem:minimizer}, thus a standard Moser iteration argument, see \cite[B.3 Lemma]{S00}, shows that $u\in L^q_{loc}(\rn)$ for any $q >1$. Then, local regularity results \cite[Theorem B.2]{S00} yield that $u\in W^{2,q}_{loc}(\rn)$ for all $q>0$, which implies that $u\in \cC^{1,\alpha}_{loc}(\rn)$. The fact that $u\in \cC^{2,\alpha}_{loc}(\Omega)\cap \cC^{2,\alpha}_{loc}(\rn\backslash\Omega)$ if $p>1$ follows from Schauder estimates, see \cite[Appendix B]{S00}. The bound \eqref{localbd} is also a consequence of these arguments.
\end{proof}

Set $g_+:=\max\{g,0\}$.
\begin{lemma}\label{compact:sup:lem}
If $u\in X^p$ is a solution of \eqref{sublinear}, then there is $t_0=t_0(\Omega,p)>0$ so that 
\begin{align*}
|u(x)|\leq \left(\frac{2p}{(2-p)^2}\right)^{\frac{1}{p-2}}(t_0-|x|)_+^{\frac{2}{2-p}}\qquad \text{ for all }x\in \rn.
\end{align*}
In particular, $u$ has compact support and $u\in L^\infty(\rn)$.
\end{lemma}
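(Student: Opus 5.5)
We will compare $u$ with an explicit dead-core barrier. Define
\[
\psi(x):=c_p\,(t_0-|x|)_+^{\beta},\qquad \beta:=\frac{2}{2-p},\qquad c_p:=\left(\frac{2p}{(2-p)^2}\right)^{\frac1{p-2}}.
\]
Set $r=|x|$ and $s=t_0-r$. On $\{r<t_0\}$ a direct computation gives
\[
\Delta\psi=c_p\bigl(\beta(\beta-1)s^{\beta-2}-\beta\tfrac{N-1}{r}s^{\beta-1}\bigr)\le c_p\beta(\beta-1)s^{\beta-2}.
\]
Since $\beta-2=\beta(p-1)$ and $\beta(\beta-1)=\frac{2p}{(2-p)^2}=c_p^{p-2}$, we get $\Delta\psi\le c_p^{p-1}s^{\beta(p-1)}=\psi^{p-1}$. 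The dropped radial term has the wrong sign for a supersolution of $\Delta\psi\ge\psi^{p-1}$. Two fixes are possible: either work with the one-dimensional profile in the direction $x\cdot e$, or use a slightly enlarged constant, which the stated constant does not allow.

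We therefore switch to a family of half-space barriers. For a unit vector $e$ and $\tau\in\r$, set
\[
\psi_{e,\tau}(x)=c_p\,(\tau-x\cdot e)_+^{\beta}.
\]
This function is $C^1$ because $\beta>1$, and it solves $\Delta\psi=\psi^{p-1}$ exactly, with $\psi\equiv0$ on $\{x\cdot e\ge\tau\}$. Let $R_0$ be such that $\o\subset B_{R_0}(0)$.

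By Lemma~\ref{Linfty:lem} we have $|u|\le M$ on a fixed large ball, where $M=M(\o,p)$ is uniform over all solutions. Indeed, the energy bound $\|u\|^2\le 2p\mu_p/(p-2)$ gives a uniform $L^{2^*}$ bound, and the Moser and regularity estimates then give local uniform bounds on any ball. Outside $\o$ we have $-\Delta|u|\le -|u|^{p-1}$ in the sense of distributions by Kato's inequality. For $p=1$ this reads $\Delta|u|\ge1$ on $\{u\neq0\}$.

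We now run the comparison. Choose $\tau=\tau_0$ large enough that $\psi_{e,\tau}\ge M$ on the slab $\{x\cdot e\le R_0\}\cap\overline{B_{R_1}}$. Then apply the weak maximum principle for the monotone operator $v\mapsto-\Delta v+v^{p-1}$ on $D=\{x\cdot e>R_0\}$. Test the difference inequality with $(|u|-\psi)_+$, which lies in $X^p$ and has support in $D$. Monotonicity of $t\mapsto t^{p-1}$ gives
\[
\int|\nabla(|u|-\psi)_+|^2\le0.
\]
At infinity, $(|u|-\psi)_+$ vanishes in the weak sense because $|u|\in L^{2^*}$ and $\psi\ge0$. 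On $\partial D$ the inequality $|u|\le\psi$ holds once $\tau$ is large. To justify this, note that the a priori uniform local bound must hold on the whole hyperplane $\partial D$, not just on a ball. For this we use a global $L^\infty$ bound obtained from Moser iteration applied to the subsolution $|u|$ of $-\Delta|u|\le0$ with a uniform $D^{1,2}$ norm. The local maximum principle on unit balls gives $\sup_{B_1(y)}|u|\le C\|u\|_{L^{2^*}(B_2(y))}$ uniformly in $y$.

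The comparison yields $|u(x)|\le c_p(\tau_0-x\cdot e)_+^\beta$ for every unit vector $e$. Taking $e=x/|x|$ gives exactly
\[
|u(x)|\le c_p\,(t_0-|x|)_+^\beta\qquad\text{with } t_0=\tau_0.
\]
The constant $t_0$ depends only on $R_0$, $M$ and $p$, hence only on $\o$ and $p$. Compact support and boundedness follow at once. The main obstacle is making the boundary bound uniform: we need $M$ controlling $|u|$ on all of $\{x\cdot e=R_0\}$ and independent of the particular solution. The global Moser bound via uniform energy is the step we would check most carefully. The case $p=1$ should be handled with Kato's inequality in its sign-function form.
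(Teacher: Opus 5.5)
Your final argument is correct, but it takes a detour the paper does not need, and the reason you give for the detour is a sign error. Where $u>0$ outside $\Omega$, the equation is $-\Delta u+u^{p-1}=0$, and its zero-order term is increasing. So a supersolution must satisfy $-\Delta\psi+\psi^{p-1}\ge0$, i.e.\ $\Delta\psi\le\psi^{p-1}$. That is exactly what your first computation gives: the dropped term $-c_p\beta\frac{N-1}{r}s^{\beta-1}\le0$ has the favorable sign.

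The paper's proof is precisely this radial comparison. It picks $t_0$ via the uniform bound \eqref{localbd} so that $|u|<h:=c_p(t_0-|x|)_+^{\beta}$ on $\overline\Omega$, and sets $S:=\{x\in\mathbb{R}^N\setminus\Omega: u>h\}$, so that $\partial S$ avoids $\partial\Omega$. In $S$ one has $u>h\ge0$, hence $u^{p-1}\ge h^{p-1}$, so $h-u$ is superharmonic there. The weak maximum principle then forces $S=\emptyset$. The same is done for $-u$, and in weak form for $p=1$.

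Your half-space version is nonetheless sound:
\begin{itemize}
\item $\psi_{e,\tau}$ solves $\Delta\psi=\psi^{p-1}$ exactly.
\item $(|u|-\psi)_+$ is an admissible test function on $\{x\cdot e>R_0\}$, because $|\nabla\psi|^2=C\psi^{p}\le C|u|^p$ on $\{|u|>\psi\}$.
\item The global bound you flag as delicate is easy. If $\operatorname{dist}(y,\Omega)\ge1$, Kato's inequality makes $|u|$ subharmonic in $B_1(y)$. The mean value inequality then gives $|u(y)|\le C|u|_{2^*}\le C'\|u\|$, uniformly by the energy bound. The bounded region near $\Omega$ is covered by Lemma~\ref{Linfty:lem}.
\end{itemize}
Your phrase ``subsolution $|u|$ of $-\Delta|u|\le0$'' is false inside $\Omega$, so the Moser/mean-value step must be restricted as above. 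With a global $M$ you can simply take $c_p(\tau_0-R_0)^\beta\ge M$, which gives $\psi\ge|u|$ on the whole half-space $\{x\cdot e\le R_0\}$; the slab restriction is unnecessary. Rotating $e$ then yields the stated estimate with $t_0=\tau_0$.

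The trade-off is this. Your route uses an exact one-dimensional solution, but it needs a global sup bound and Kato's inequality. The paper's radial barrier is a genuine supersolution, so it only needs the bound on $\overline\Omega$ and compares $u$ itself on the set where $u$ exceeds $h$.
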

\begin{proof}
\underline{Case $p>1$.}
By Lemma~\ref{Linfty:lem}, $u$ is locally bounded, and then, by \eqref{localbd}, there is $t_0=t_0(\Omega,p)>0$ such that
\begin{align}\label{t0}
|u(x)|<\left(\frac{2p}{(2-p)^2}\right)^{\frac{1}{p-2}}(t_0-|x|)_+^{\frac{2}{2-p}}\qquad \text{ for all }x\in \overline{\Omega}.
\end{align}
Let $h(|x|):=\left(\frac{2p}{(2-p)^2}\right)^{\frac{1}{p-2}}(t_0-|x|)_+^{\frac{2}{2-p}}$. A direct computation shows that 
$h''= h^{p-1}$ in $(0,\infty)$. Since $h$ is nonincreasing,
\begin{align*}
-\Delta h(|x|)
=-h''(|x|)-\frac{N-1}{|x|}h'(|x|)\geq -h^{p-1}(|x|)\qquad \text{ for }|x|>0.
\end{align*}
Let $w(x):=h(|x|)-u(x)$ and $S:=\{x\in \rn\backslash \Omega\::\: u(x)>h(|x|)\}.$ By \eqref{t0}, we have that $\partial S\cap  \partial\Omega=\emptyset$. 

Assume, by contradiction, that $S\neq \emptyset$. Then, since $u \in \cC^{2, \alpha}_{loc}(\R^N \setminus \Omega)$ by Lemma~\ref{compact:sup:lem}, 
\begin{align*}
-\Delta w(x)\geq |u(x)|^{p-2}u(x)-h^{p-1}(|x|)\geq 0 \quad \text{ in }S,\qquad w=0\quad \text{ on }\partial S.
\end{align*}
By applying the weak maximum principle \cite[Theorem 8.1]{GT}, 
we would have that $w\geq 0$ in $S$, but this contradicts the definition of $S.$
Therefore, $S=\emptyset$ and $u(x)<h(x)$ in $\rn$. Repeating the argument for $-u$ instead of $u$ yields that $-u(x)<h(x)$ in $\rn$ and 
the claim follows.

\underline{Case $p=1$.}
As above, one can show that there exists $t_0=t_0(\Omega) >0$ such that \eqref{t0} holds, namely 
\[ |u(x)| < \frac 12 (t_0 - |x|)_+^2 \quad \text{ for all }x \in \overline \Omega. \]
As in the case $p>1$ we define $h(|x|):= \frac 12 (t_0 - |x|)_+^2 $. It satisfies $h''=1$ in $(0, \infty)$ and $-\Delta h(|x|) \ge -1$ for $|x|>0$. 
Let $w(x)$ and $S$ be defined as for $p>1$. Here, since $u$ does  not have $\cC^{2, \alpha}$ regularity, we need to use the weak formulation. 
Let $\varphi \in \mathcal{C}_c^\infty(\R^N)$ with $\varphi \ge 0$ and $\text{supp}(\varphi) \subset \subset S$. Notice that this is possible because $S$ is an open set since $\partial S \cap \partial \Omega = \emptyset$. Then 
\[ \int_{\R^N} \nabla w \nabla \varphi =-\int_{\R^N}\Delta h \varphi - \int_{\R^N} Q_\Omega \sgn(u) \varphi \ge -\int_{\R^N} \varphi + \int_{\R^N} \sgn(u) \varphi =0, \]
as $u(x) > h(|x|) \ge 0$ in $S$. Moreover, $w=0$ on $\partial S$. By the weak maximum principle \cite[Theorem 8.1]{GT}, 
we would have $w\geq 0$ in $S$, contradicting the definition of $S.$ 
Therefore, $S=\emptyset$ and $u(x)<h(x)$ in $\rn$. Repeating the argument for $-u$ instead of $u$ yields that $-u(x)<h(x)$ in $\rn$ and 
the claim follows. 
\end{proof}

\begin{remark}\label{scaling}
    Notice that a simple scaling argument shows that if $u$ is a solution to \eqref{sublinear}, then $v_r(x):=r^{\frac{2}{2-p}} u(x/r)$ is a solution to 
    \[ -\Delta v_r= Q_{\Omega_r} |v_r|^{p-2}v_r \quad \text{ in } \R^N,  \]
    where $\Omega_r:=r \Omega$.
    In particular 
    \begin{equation*}
        |v_r|_\infty \to \infty \, \text{ as } \, r \to \infty \quad \text{ and }  \quad |v_r|_\infty \to 0 \, \text{ as } \, r \to 0. 
    \end{equation*}
\end{remark}
% \begin{remark}\label{restriction supp}
%     If $p=1$, and $u$ is a nonnegative solution to \eqref{sublinear}, then, choosing $\varphi \in \cC_c^\infty(\rn)$ such that $\varphi=1$ in $\supp(u)$, one has 
%     \[ 0 = \int_{\rn} \nabla u \nabla \varphi= \int_{\R^N} Q_\Omega \sgn(u) \varphi = |\Omega|- |\supp (u) \setminus \Omega|= 2 |\Omega|- |\supp(u)|,\]
%     whence
%     \[ 2 |\Omega|=|\supp(u)|. \]
%     In particular, this shows that, if $p=1$ and $\Omega$ is an ellipse with sufficiently large eccentricity or a thin enough annulus, then the support of any nonnegative solution to \eqref{sublinear} cannot be a ball. 
% \end{remark}

\begin{lemma}\label{pos:lem}
Let $u$ be a nontrivial nonnegative solution of \eqref{sublinear}.
\begin{itemize}
\item[$(i)$] For each connected component $\omega$ of $\o$ we have that, either $u\equiv 0$ on $\overline{\omega}$, or $u> 0$ on $\overline{\omega}$.
\item[$(ii)$] $u> 0$ in at least one connected component $\omega$ of $\o$.
\end{itemize}
\end{lemma}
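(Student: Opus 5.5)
For part $(i)$ I plan to apply the strong maximum principle on each connected component $\omega$ of $\Omega$. By Lemma~\ref{Linfty:lem}, $u\in \cC^{1,\alpha}_{loc}(\rn)$, and $u$ is $\cC^{2,\alpha}$ inside $\Omega$ when $p>1$. Since $u\ge 0$ and $Q_\Omega=1$ on $\omega$, we have $-\Delta u = f_p(u)\ge 0$ in $\omega$, so $u$ is superharmonic there. The strong maximum principle then gives either $u\equiv 0$ in $\omega$ or $u>0$ in $\omega$.

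It remains to upgrade ``$u>0$ in $\omega$'' to positivity on the boundary $\partial\omega$. I would argue by contradiction. Suppose $u(x_0)=0$ for some $x_0\in\partial\omega$. Because $\Omega$ is smooth, $\omega$ satisfies an interior ball condition at $x_0$. The Hopf lemma, applied to the superharmonic function $u>0$ in $\omega$, gives $\partial_\nu u(x_0)<0$, where $\nu$ is the outer normal. On the other hand, $x_0$ is a global minimum of the $\cC^1$ function $u\ge 0$ on $\rn$, so $\nabla u(x_0)=0$. This is a contradiction, hence $u>0$ on $\overline\omega$. One technical point: Hopf's lemma needs $u\in\cC^2(\omega)\cap \cC(\overline\omega)$. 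For $p=1$, $u\in W^{2,q}_{loc}$ and $-\Delta u=\sgn(u)\ge0$ in $\omega$, so $u$ is superharmonic in the weak sense. Alternatively, in $\omega$ where $u>0$ we have $-\Delta u=1$, which is smooth. I would also use the version of Hopf comparing $u$ with a barrier on an annulus, so only $\cC^1$ up to the boundary point is needed.

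The dichotomy on $\overline\omega$ also needs a check in the case $u\equiv0$ in $\omega$: then $u\equiv 0$ on $\overline\omega$ by continuity, so that case is consistent.

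For part $(ii)$ I argue by contradiction. Suppose $u\equiv 0$ on $\overline\Omega$. Then, testing the equation with $u$ itself (which is admissible since $u$ is compactly supported by Lemma~\ref{compact:sup:lem} and lies in $X^p$, so an approximation argument applies), we get
\[
\|u\|^2=\int_{\rn}Q_\Omega f_p(u)u=-\int_{\rn\smallsetminus\Omega}|u|^p\le 0 .
\]
Here, for $p=1$, we use $\sgn(u)u=|u|$. Hence $u\equiv0$, contradicting that $u$ is nontrivial. So some component $\omega$ has $u\not\equiv0$ on it, and by part $(i)$, $u>0$ on $\overline\omega$.

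The main obstacle is the boundary positivity in part $(i)$, specifically justifying the Hopf lemma with the limited regularity at $\partial\omega$, particularly for $p=1$. I would handle it through the smoothness of $\partial\Omega$ (interior ball condition) together with the $\cC^{1}$ regularity of $u$ across $\partial\Omega$ from Lemma~\ref{Linfty:lem}.
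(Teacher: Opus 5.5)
Your proposal is correct and follows essentially the same route as the paper's proof: test the equation with $u$ to rule out $u\equiv 0$ in $\Omega$, then apply the strong maximum principle on each component $\omega$, and Hopf's lemma for positivity up to $\partial\omega$. The one difference is that you spell out the contradiction the paper leaves implicit, namely that Hopf gives $\partial_\nu u(x_0)<0$ while $\nabla u(x_0)=0$ because $x_0$ is a minimum of the $\cC^1$ function $u\ge 0$; your treatment of the $p=1$ regularity is also sound.
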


\begin{proof}
Since $u\not\equiv 0$ in $\rn,$
\begin{align*}
0<|\nabla u|^2_2 = \int_{\rn}Q_\Omega f_p(u) u\, dx \le \int_{\Omega} f_p(u) u \, dx,
\end{align*}
and therefore $u\not\equiv 0$ in $\Omega$ and, as a consequence, $u\not\equiv 0$ in some connected component $\omega$ of $\Omega$.

If $u\not\equiv 0$ in the connected component $\omega$ of $\Omega$, then, for any $\varphi \in \mathcal{C}_c^\infty(\omega)$ such that  $\varphi \ge 0$, 
\begin{align}\label{max princ}
\int_{\R^N} \nabla u \nabla \varphi =\int_{\R^N} f_p(u) \varphi \geq 0. 
\end{align}
Since $\omega$ is connected and $u\in C^{1,\alpha}_{loc}(\rn)$ (as shown in Lemma~\ref{Linfty:lem}), the strong maximum principle yields $u>0$ in $\omega$ and Hopf's lemma implies that $u>0$ in $\overline{\omega}$; see, for example, \cite[Theorem 1.28]{DP}.
\end{proof}

\begin{lemma} \label{lem:sign of minimizer}
If $\o$ is connected, $v\in X^p$ and $I_p(v)=\mu_p$, then, either $v\geq 0$, or $v\leq 0$.
\end{lemma}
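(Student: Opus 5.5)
Assume $v$ is a minimizer of $I_p$ that changes sign, and aim for a contradiction. The plan is to compare $v$ with $|v|$. Write $v=v^+-v^-$ with $v^\pm\ge0$. Since $I_p(|v|)=I_p(v)$, also $|v|$ is a minimizer. By Lemma~\ref{lem:minimizer}$(ii)$ it is therefore a nonnegative solution of \eqref{sublinear}; in the case $p=1$ this goes through \cite[Proposition 6]{Clarke} and Lemma~\ref{opt sol}. We may assume $v\not\equiv0$, because $\mu_p<0=I_p(0)$. Then Lemma~\ref{pos:lem}, applied to $|v|$ with $\Omega$ connected, gives $|v|>0$ on $\overline\Omega$. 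By continuity (Lemma~\ref{Linfty:lem}) and the connectedness of $\Omega$, $v$ has constant sign on $\overline\Omega$. Say $v>0$ on $\overline\Omega$ after replacing $v$ by $-v$. Then $v^-$ vanishes on a neighborhood of $\overline\Omega$.

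Next, use that $v^+$ and $v^-$ have disjoint supports, so that
\[
I_p(v)=I_p(v^+)+I_p(v^-),\qquad I_p(v^-)=\frac12\|v^-\|^2+\frac1p\int_{\rn\smallsetminus\o}|v^-|^p .
\]
If $v^-\not\equiv0$, then $I_p(v^-)>0$. Consequently $I_p(v^+)<I_p(v)=\mu_p$, which contradicts the definition of $\mu_p$. Hence $v^-\equiv0$, i.e. $v\ge0$.

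The splitting of the Dirichlet energy holds because $\nabla v^+=\nabla v\,\chi_{\{v>0\}}$ and $\nabla v^-=-\nabla v\,\chi_{\{v<0\}}$ a.e., and $v^\pm\in X^p$. This argument does not even need the positivity step, since $I_p(v^-)\ge \frac12\|v^-\|^2-\frac1p\int_\Omega|v^-|^p$ could be negative only if $v^-$ lives partly in $\Omega$. That is exactly why positivity on $\overline\Omega$ is the key input.

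The main obstacle is that step, and specifically the use of connectedness of $\Omega$. It needs the strong maximum principle applied to $|v|$, which must first be known to be a solution; that is supplied by Lemma~\ref{lem:minimizer}. One then needs $|v|$ to be nonvanishing on all of $\Omega$, not just on one component, which is where connectedness enters through Lemma~\ref{pos:lem}$(i)$. A minor point in the case $p=1$ is that the energy splitting is pure algebra and needs no differentiability, so the nonsmoothness of $I_1$ causes no trouble.
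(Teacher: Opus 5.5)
Your proof is correct and follows the same skeleton as the paper's. You show that $|v|$ is a minimizer, hence a nonnegative solution; Lemma~\ref{pos:lem} with $\Omega$ connected then gives $|v|>0$ on $\overline\Omega$, so one of $v^\pm$ vanishes on $\Omega$. The only difference is the last step: the paper tests the equation with $v^\pm$ to get $\|v^\pm\|^2=\int_{\rn}Q_\Omega|v^\pm|^p$, which is $\le 0$ for the part vanishing on $\Omega$, whereas you use the splitting $I_p(v)=I_p(v^+)+I_p(v^-)$ together with minimality, which treats $p=1$ and $p>1$ uniformly at that step. Your aside that the argument ``does not even need the positivity step'' is misworded, since positivity on $\overline\Omega$ is exactly what makes $I_p(v^-)>0$.
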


\begin{proof}
\underline{Case $p>1$.} 
Assume, by contradiction, that $I_p(v)=\mu_p$ and that $v$ changes sign. Write $v=v^++v^-$ with $v^+:=\max\{v,0\}$ and $v^-:=\min\{v,0\}$. Then, 
\begin{equation} \label{eq:v pm}
    0=I_p'(v)v^\pm=I_p'(v^\pm)v^\pm=\|v^\pm\|^2-\irn Q_\o|v^\pm|^p.
\end{equation}
Set $u:=|v|$. Then, $I_p(u)=\mu_p$ and, by Lemma~\ref{pos:lem}, $u>0$ on $\overline{\o}$. It follows that, either $\{x\in\rn:v(x)>0\}\subset\rn\smallsetminus\o$, or $\{x\in\rn:v(x)<0\}\subset\rn\smallsetminus\o$. Assume the former holds true. Then we derive from \eqref{eq:v pm} that
$$\|v ^+\|^2=\irn Q_\o|v^+|^p=-\irn|v^+|^p\leq0,$$
which implies that $v^+=0$. This is a contradiction. The other case is similar.

\underline{Case $p=1$.} If $v$ is a minimizer for $I_1$, then, by Lemma~\ref{lem:minimizer}, it is a solution to \eqref{sublinear}, namely  
\[ 0 = \int_{\R^N} \nabla v \nabla \varphi - \int_{\rn} Q_\Omega \, \sgn(v) \varphi \quad \text{ for all } \varphi \in \mathcal{C}_c^\infty(\R^N). \]
Thus
\[ 0 = \norm{v^\pm}^2 - \int_{\R^N} Q_\Omega |v^\pm|. \]
From here, the proof follows as in the case $p>1$. 
\end{proof}

\begin{remark}\label{rem:sign-changing}
Note that the previous lemma does not exclude the existence of a sign-changing minimizer if $\o$ is not connected.
\end{remark}

Let us show that $I_p$ has a unique nonnegative minimizer. For this, we need the following auxiliary result. 

\begin{lemma}\label{lem:2}
Let $p \in [1, 2)$.
For $u,v\in X^p$, $u\geq 0$, $v\geq 0$ and $t\in [0,1],$ let
\begin{align}\label{gamma}
\gamma(t):=((1-t)u^p+tv^p)^\frac{1}{p}.
\end{align}
If $|\nabla u|\neq |\nabla v|$, then the function 
\begin{align*}
t\mapsto I_p(\gamma(t))
\end{align*}
is strictly convex for $t\in[0,1]$.
\end{lemma}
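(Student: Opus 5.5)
Write $I_p(\gamma(t)) = \frac12\|\gamma(t)\|^2 - \frac1p\int_{\rn} Q_\Omega \gamma(t)^p$ and treat the two terms separately. This is the hidden convexity approach of \cite{BFMST18}.

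\textbf{The potential term.} Since $\gamma(t)^p=(1-t)u^p+tv^p$ pointwise, we get
\[
\int_{\rn} Q_\Omega\gamma(t)^p=(1-t)\int_{\rn} Q_\Omega u^p+t\int_{\rn} Q_\Omega v^p .
\]
This is affine in $t$, whatever the sign of $Q_\Omega$. So the indefinite weight plays no role, and it suffices to prove that $t\mapsto \|\gamma(t)\|^2=\int_{\rn}|\nabla\gamma(t)|^2$ is strictly convex on $[0,1]$. For $p=1$ the curve $\gamma$ is the segment itself and the Dirichlet energy is strictly convex along segments with $\nabla u\ne\nabla v$, which is immediate. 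So I focus on $p\in(1,2)$.

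\textbf{The Dirichlet term.} I would show pointwise convexity of $t\mapsto|\nabla\gamma(t)(x)|^2$ at a.e.\ $x$, with strictness on a set of positive measure.
\begin{itemize}
\item Where $u,v>0$, write $\gamma^p=(1-t)u^p+tv^p$ and compute $\nabla\gamma=\gamma^{1-p}\big((1-t)u^{p-1}\nabla u+tv^{p-1}\nabla v\big)$.
\item Set $a=u^p$, $b=v^p$, $\xi=\nabla u/u$, $\eta=\nabla v/v$, and $s_t=(1-t)a+tb$.
\item Then $|\nabla\gamma|^2=s_t^{2/p}\,\big|\tfrac{(1-t)a\xi+tb\eta}{s_t}\big|^2$, and the task is convexity of this expression in $t$.
\end{itemize}
A cleaner route is the classical lemma (Benguria–Brezis–Lieb, Díaz–Saa, used in \cite{BFMST18}): for $q\in[1,2]$, the map $w\mapsto\int|\nabla w^{1/q}|^2$ is convex on nonnegative functions. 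Applied with $w=u^p$ and $q=p\in(1,2]$, since $\gamma(t)^p$ is the segment between $u^p$ and $v^p$, it gives convexity of $t\mapsto\|\gamma(t)\|^2$. Pointwise, the key identity is that $(x,y)\mapsto|x|^2/y^{\,r}$ (a perspective-type function) is jointly convex for suitable $r$. One can also verify this through the generalized Picone inequality of \cite{BF14}:
\[
|\nabla \phi|^2\ \ge\ \nabla(\phi^{p}/\psi^{p-1})\cdot\nabla\psi\,\cdot\frac{2}{p}\cdots .
\]
I would take the second derivative in $t$ directly and reduce it to a sum of squares, which yields
\[
\frac{d^2}{dt^2}|\nabla\gamma|^2=C(p)\,\gamma^{2-2p}\,(u^p-v^p)^2\,\Big|\frac{\nabla u}{u}-\frac{\nabla v}{v}\Big|^2\,\Phi\ \ge 0 .
\]
Here $\Phi>0$ depends on $p\le2$, and the sign uses $p\le2$ (for $p=2$ the curve is geodesically linear in $\gamma$). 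On the sets where $u=0$ or $v=0$, $\gamma$ is a multiple $c(t)u$ or $c(t)v$ with $c(t)^2=(1-t)^{2/p}$ or $t^{2/p}$. These are convex because $2/p\ge1$, strictly convex for $p<2$.

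\textbf{Strictness and the main obstacle.} Convexity of the integral then follows by integrating the pointwise second derivatives, after justifying differentiation under the integral. I expect strictness to be the main obstacle. The plan is to argue by contradiction: suppose the second derivative vanishes at some $t$.
\begin{itemize}
\item On $\{u>0,v>0\}$, the formula forces $u^p=v^p$ or $\nabla\log u=\nabla\log v$ a.e. In either case $u=cv$ locally on connected components.
\item On the region where exactly one of $u,v$ vanishes, the $c(t)^2$ factor forces $\nabla u=0$ or $\nabla v=0$ a.e. there.
\item Combining, one aims to conclude $|\nabla u|=|\nabla v|$ a.e., contradicting the hypothesis.
\end{itemize}
The delicate points are the regularity needed to differentiate twice under the integral, and matching the constants $c$ across components to derive exactly $|\nabla u|=|\nabla v|$. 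For the differentiation, I would use the compact support from Lemma~\ref{compact:sup:lem} when $u,v$ are solutions, or truncation and dominated convergence in general.
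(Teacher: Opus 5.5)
\textbf{Strictness is where the proof breaks.} Your handling of the potential term and of non-strict convexity is fine, and it takes a different route from the paper. The paper starts from the pointwise bound $|\nabla\gamma(t)|\le((1-t)|\nabla u|^p+t|\nabla v|^p)^{1/p}$ of \cite{BF14} and gets strictness from the strict power-mean inequality against the exponent $2$; that is where the hypothesis $|\nabla u|\neq|\nabla v|$ enters. Strictness is the real content of the lemma, since it is what forces $u=v$ in Theorem~\ref{uniquenessOmega}. Your displayed second-derivative formula is wrong, so the contradiction argument built on it cannot close. Test it on $u=2v$. Since $\nabla u/u=\nabla v/v$, your formula gives $0$, so $t\mapsto\|\gamma(t)\|^2$ would be affine. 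But $\gamma(t)=((1-t)2^p+t)^{1/p}v$, so $\|\gamma(t)\|^2=((1-t)2^p+t)^{2/p}\|v\|^2$, which is strictly convex because $2/p>1$. The degeneracy ``$u^p=v^p$ or $\nabla\log u=\nabla\log v$'' is the Benguria--Brezis--Lieb degeneracy of the case $p=2$; for $p<2$ it is not a degeneracy. On a region where $u=cv$ with $c\neq 1$ you have $|\nabla u|=c|\nabla v|\neq|\nabla v|$, so no matching of constants can give $|\nabla u|=|\nabla v|$. Indeed, if your formula were true, the lemma would be false.

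\textbf{The missing term and how to fix it.} You dropped the homogeneity gain of degree $2/p>1$ on $\{u>0,v>0\}$, although you used it correctly on $\{uv=0\}$. Set $a=u^p$, $b=v^p$, $s_t=(1-t)a+tb$ and $r=2-2/p\in(0,1)$. Then $|\nabla\gamma(t)|^2=p^{-2}s_t^{-r}|\nabla s_t|^2$ there, and completing the square in the Hessian of $(x,y)\mapsto|x|^2y^{-r}$ gives
\[
\frac{d^2}{dt^2}|\nabla\gamma(t)|^2=\frac{1}{p^2s_t^{r}}\Big(2\Big|\nabla(b-a)-r\,\frac{b-a}{s_t}\,\nabla s_t\Big|^2+r(1-r)\,\frac{(b-a)^2}{s_t^2}\,|\nabla s_t|^2\Big).
\]
\begin{itemize}
\item Both terms vanish iff $\nabla a=\nabla b$ and either $a=b$ or $\nabla a=0$. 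Since $u,v>0$ there, either alternative forces $\nabla u=\nabla v$ at that point.
\item Combine this with your computation on $\{uv=0\}$ and with $\nabla u=0$ a.e.\ on $\{u=0\}$ (likewise for $v$). Then $g''(t_0)=0$ for some $t_0\in(0,1)$ forces $\nabla u=\nabla v$ a.e., hence $u=v$, which the hypothesis excludes. So $g''>0$ on $(0,1)$.
\item Differentiating under the integral needs neither compact support nor truncation. For $t$ in a compact subset of $(0,1)$ you have $|b-a|\le s_t/\min\{t,1-t\}$, $s_t^{-r}|\nabla a|^2\le(1-t)^{-r}p^2|\nabla u|^2$ and $s_t^{-r}|\nabla b|^2\le t^{-r}p^2|\nabla v|^2$. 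Hence the integrand is dominated by $C_t(|\nabla u|^2+|\nabla v|^2)$.
\end{itemize}
Corrected this way, your route gives strict convexity under the weaker hypothesis $u\neq v$, on every subinterval at once. The paper's power-mean argument directly gives only the strict chord inequality on $[0,1]$, which is what the uniqueness proof uses, and cites \cite{BFMST18} for the rest.
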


\begin{proof}
It is known that 
\begin{align*}
|\nabla \gamma(t)|\leq ((1-t)|\nabla u|^p+t|\nabla v|^p)^\frac{1}{p}\qquad \text{ for }t\in [0,1].
\end{align*}
Since $p<2$, it follows that 
\begin{align*}
|\nabla \gamma(t)|^2\leq ((1-t)|\nabla u|^2+t|\nabla v|^2)^\frac{1}{2}\qquad \text{ for }t\in [0,1],
\end{align*}
and the inequality is strict whenever $|\nabla u|$ and $|\nabla v|$ are different and $t\in (0,1)$. This inequality implies that 
\begin{align*}
    t\mapsto \|\gamma (t)\|^2
\end{align*}
is strictly convex, see the proof of \cite[Lemma 3.5]{BFMST18}. The proof is now finished, because 
\begin{align*}
\int_{\rn}Q_\Omega |\gamma(t)|^p
=\int_{\rn}Q_\Omega ((1-t)u^p+tv^p)=(1-t)\irn Q_\o u^p+t\irn Q_\o v^p
\end{align*}
for all $t\in[0,1]$.
\end{proof}

\begin{theorem}\label{uniquenessOmega}
$I_p$ has a unique nonnegative minimizer $w_p\in X^p$. Furthermore, if $\Omega$ is connected, then $w_p$ is the only nonnegative solution of \eqref{sublinear}. 
\end{theorem}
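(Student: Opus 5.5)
Uniqueness of the nonnegative minimizer will come from the hidden convexity of Lemma~\ref{lem:2}. Let $u,v\ge 0$ be two minimizers of $I_p$, so $I_p(u)=I_p(v)=\mu_p$, and let $\gamma(t)$ be as in \eqref{gamma}. First I will check that $\gamma(t)\in X^p$. It is bounded by $u+v$, which gives the $L^p$ bound, and its gradient bound is the one recalled in the proof of Lemma~\ref{lem:2}. By compact support (Lemma~\ref{compact:sup:lem}), $\gamma(t)\in L^{2^*}$. Then $\gamma(t)$ is admissible and $I_p(\gamma(t))\ge\mu_p$. Suppose $|\nabla u|\neq|\nabla v|$ on a set of positive measure. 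Lemma~\ref{lem:2} then makes $t\mapsto I_p(\gamma(t))$ strictly convex, so $I_p(\gamma(1/2))<\frac12(I_p(u)+I_p(v))=\mu_p$, which is a contradiction. Hence $|\nabla u|=|\nabla v|$ a.e., and so $\|u\|=\|v\|$. Since minimizers are solutions, $\|u\|^2=\int Q_\Omega u^p$. Combined with $I_p(u)=\mu_p$, this gives $\|u\|^2=\|v\|^2=\frac{2p}{p-2}\mu_p$.

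To conclude $u=v$ rather than just equal gradient moduli, I will use the equality case more carefully. The function $t\mapsto I_p(\gamma(t))$ is convex and equals $\mu_p$ at its endpoints, and it is also bounded below by $\mu_p$. It must therefore be identically $\mu_p$, so each $\gamma(t)$ is a nonnegative minimizer. Equality must then hold in the pointwise inequality $|\nabla\gamma(t)|\le((1-t)|\nabla u|^p+t|\nabla v|^p)^{1/p}$. That inequality comes from convexity of the map $(a,\xi)\mapsto |\xi|^p/a^{p-1}$, applied after writing $\nabla\gamma^p$ as a combination. Equality in it forces $\nabla(\log u)=\nabla(\log v)$ where $u,v>0$, i.e.\ $u/v$ is locally constant on $\{u>0\}\cap\{v>0\}$.

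This is the main obstacle: passing from proportionality to $u=v$ without assuming $\Omega$ connected. I would handle it by using the equation. If $u=cv$ on a region, the sublinear equation gives $c=c^{p-1}$, so $c=1$ for $p\in(1,2)$. For $p=1$ I would instead use $|\nabla u|=|\nabla v|$, which gives $c=1$ directly. The supports also have to be matched. Here I will use that $\gamma(1/2)$ is a minimizer and hence, by Lemma~\ref{pos:lem}, either positive or vanishing on each component $\omega$ of $\Omega$. I then compare energies component by component. On the complement of the positivity set I compare the energy of $u$ with that of $\max(u,v)$. This combination is itself $\gamma$-type with equality, so it forces both supports to coincide. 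If this bookkeeping gets messy, a cleaner fallback is the Picone-type identity of \cite{BF14}.

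For the second statement, let $\Omega$ be connected and let $u\ge0$ be any nontrivial solution. By Lemma~\ref{pos:lem}, $u>0$ on $\overline\Omega$, and likewise $w_p>0$ on $\overline\Omega$. I then test the two equations with $(u^p-w_p^p)/u^{p-1}$ and $(w_p^p-u^p)/w_p^{p-1}$, in the spirit of \cite{BR25}. These test functions are bounded on $\Omega$; away from $\Omega$ they must be truncated as $(u+\varepsilon)$, $(w_p+\varepsilon)$, letting $\varepsilon\to0$. Adding the two identities, the right-hand sides give
\[
\int Q_\Omega\bigl(u^{p-1}/u^{p-1}-w_p^{p-1}/w_p^{p-1}\bigr)(u^p-w_p^p)=0.
\]
The left-hand side is nonnegative by the generalized Picone inequality, with equality only if $u/w_p$ is constant. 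On $\{u>0,w_p=0\}\setminus\Omega$ the $\varepsilon$-terms carry a sign favorable to this argument, since $Q_\Omega=-1$ there. Hence $u=cw_p$, and the equation forces $c=1$. For $p=1$ the nonsmooth case, I would rather argue by showing that $u$ is a minimizer: the same $\gamma$-path convexity between $u$ and $w_p$ together with $I_p'(u)=0$ gives $I_p(u)\le I_p(w_p)$.
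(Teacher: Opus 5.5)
For uniqueness of the minimizer you follow the paper's route (hidden convexity via Lemma~\ref{lem:2}). Your equality-case analysis is more careful than the paper's, which only asserts that, thanks to compact support, $u\neq v$ forces $|\nabla u|\neq|\nabla v|$. Equal gradient moduli alone do not give $u=v$, so your extra step is needed. Two spots in it need repair.

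\textbf{The case $p=1$.} Here $(a,\xi)\mapsto|\xi|$ is not strictly convex, so equality does not yield $\nabla\log u=\nabla\log v$. You already have $|\nabla u|=|\nabla v|$ a.e., however, and equality in $|(1-t)\nabla u+t\nabla v|\le(1-t)|\nabla u|+t|\nabla v|$ then forces $\nabla u=\nabla v$ a.e. Hence $u=v$ at once.

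\textbf{Matching the supports.} The argument through $\max(u,v)$ does not work as written. The function $\max(u,v)$ is not on the path $\gamma$, so no equality statement is available for it. What you already derived suffices.
On $\{u>0=v\}$ you have $\nabla v=0$, hence $\nabla u=0$ a.e. Since $u\in W^{2,q}_{loc}$, \cite[Lemma 7.7]{GT} gives $D^2u=0$ a.e.\ on $\{\nabla u=0\}$. But $-\Delta u=Q_\Omega u^{p-1}\neq0$ a.e.\ on $\{u>0\}$, so $\{u>0=v\}$ is null.
Therefore $\{u>0,v>0\}$ is dense in $\{u>0\}$. Since $u=v$ there and both functions are continuous, you get $\{u>0\}=\{v>0\}$ and $u=v$.

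\textbf{The second statement.} Here your route genuinely differs from the paper's.
The paper tests only the equation of $u$, with $w_p^p/(u+\varepsilon)^{p-1}$. Picone and H\"older give, in the limit, $\int_{\{u>0\}}Q_\Omega w_p^p\le\|u\|^{2-p}\|w_p\|^p$. The left side is at least $\int Q_\Omega w_p^p=\|w_p\|^2$, because $\Omega\subset\{u>0\}$ and $Q_\Omega=-1$ outside $\Omega$. Hence $\|w_p\|\le\|u\|$ and $I_p(u)=-\frac{2-p}{2p}\|u\|^2\le\mu_p$, so $u$ is a minimizer and the first part concludes. For $p=1$, Cauchy--Schwarz replaces Picone. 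No equality case is ever needed.

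Your symmetric D\'iaz--Saa-type argument also works. The $\varepsilon$-limit of the summed right-hand sides is $-\int_{\{u>0=w_p\}}u^p-\int_{\{w_p>0=u\}}w_p^p\le 0$. This forces $|\nabla u|=|\nabla w_p|$ a.e.\ and positivity sets that agree up to a null set.
To get $u/w_p$ locally constant, however, you must carry the nonnegative Picone defect through $\varepsilon\to0$ (Fatou) and then use its equality case. So it costs more, but it never uses the minimality of $w_p$.

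Your $p=1$ argument is the paper's in disguise. For $h(t)=I_1((1-t)u+tw_1)$ one finds $h'(0^+)=\langle u,w_1\rangle-\|w_1\|^2=\int_{\{u=0\}\setminus\Omega}w_1\ge0$. This is exactly the inequality $\|w_1\|^2\le\langle u,w_1\rangle$ that the paper proves. Spell out this sign computation, since it is where the connectedness of $\Omega$ enters.
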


\begin{proof}
The first statement can be proved in the same way for the case $p>1$ and $p=1$. 
    By Lemma~\ref{lem:minimizer}, $I_p$ has a nonnegative minimizer. Assume that $u$ and $v$ are two nonnegative minimizers of $I_p$, and let $\gamma$ be given by \eqref{gamma}. Then, by Lemma~\ref{lem:2}, the function $h(t):=I_p(\gamma(t))$ is strictly convex in $[0,1]$ if $|\nabla u|\neq |\nabla v|$. Recalling that $u$ and $v$ have compact support, $h(t)$ is strictly convex if $u \ne v$.  
    Since $h(t)\geq\mu_p$ for all $t\in[0,1]$ and $h(0)=I_p(u)=\mu_p=I_p(v)=h(1)$, it follows that $u=v$. 

    To prove the second statement, we need to distinguish the case $p>1$ and the case $p=1$. We assume that $\Omega$ is connected. 

    \underline{Case $p>1$.}
    We use a standard argument based on the generalized Picone identity, see for example \cite[Proposition 2.3]{BR25}. Let $u,v$ denote two nonnegative solutions of \eqref{sublinear}. By Lemma~\ref{pos:lem}, $u>0$ and $v>0$ in $\Omega.$ Without loss of generality, we may assume that $v$ is the unique nonnegative minimizer of \eqref{sublinear}.  Let $\varepsilon > 0$ and let $\phi:= \frac{v^{p}}{(u+\varepsilon)^{p-1}}$. Since $v\in L^\infty(\rn)$, we have that $\phi \in X^{p}$. By the generalized Picone identity \cite[Proposition 2.9]{BF14},
\[
\nabla u\cdot \nabla \left(\frac{v^{p}}{(u+\varepsilon)^{p-1}}\right)
\leq |\nabla u|^{2-p}|\nabla v|^{p} \quad \text{in } \rn.
\]
Hence, testing \eqref{sublinear} with $\phi$ and using Hölder's inequality,
\[
\int_{\rn} Q_\Omega v^{p}\left(\frac{u}{u+\varepsilon}\right)^{p-1} \, dx 
= \int_{\rn}\nabla u \cdot\nabla \left(\frac{v^{p}}{(u+\varepsilon)^{p-1}}\right) dx\leq 
\int_\Omega |\nabla u|^{2-p}|\nabla v|^{p}\, dx\leq |\nabla u|_2^{2-p}|\nabla v|_2^{p}.
\]

Then, by dominated convergence, we can let $\varepsilon \to 0$ and we have that 
\begin{equation*}
|\nabla v|^2_2
=\int_{\rn} Q_\Omega v^{p} \, dx 
=\lim_{\eps\to 0}\int_{\rn} Q_\Omega v^{p}\left(\frac{u}{u+\varepsilon}\right)^{p-1} \, dx 
\leq |\nabla u|_2^{2-p}|\nabla v|_2^{p}.
\end{equation*}
This yields that $|\nabla v|_2\leq |\nabla u|_2$ and therefore
\begin{equation}\label{eq2.6}
I_p(v)\leq I_p(u) = -\frac{2-p}{2p} |\nabla u|_2^{2}
\leq -\frac{2-p}{2p} |\nabla v|_2^{2}
= I_p(v).
\end{equation}
Hence $u$ is also a minimizer of $I_p$ and the first statement implies that $u \equiv v$, as claimed.

\underline{Case $p=1$.} Let $u, v$ be two nonnegative solutions of \eqref{sublinear}, and assume that $v$ is the nonnegative minimizer. 
    By Lemma~\ref{pos:lem} we know that $u>0$ and $v>0$ in $\overline \Omega$. We now have
    \[ \int_{\R^N} Q_\Omega \sgn(u) v = \int_{\R^N} \nabla u \nabla v \le |\nabla u|_2 |\nabla v|_2. \]
    Observe that 
    \[ \Omega \subset \text{supp}(v) \cap \text{supp}(u). \]
    Hence
    \[ |\nabla v|_2^2 = \int_{\R^N} Q_\Omega v = \int_{\text{supp}(v)} Q_\Omega v \le \int_{\text{supp}(v) \cap \text{supp}(u)} Q_\Omega  \sgn (u) v \le |\nabla u|_2 |\nabla v|_2. \]
    Therefore, $|\nabla v|_2 \le |\nabla u|_2$. 
    We now conclude
    \[ I_1(v) \le I_1(u) = -\frac 12 |\nabla u|_2^2 \le -\frac 12 |\nabla v|_2^2 = I_1(v), \]
    whence $u$ is also a least energy solution, thus $u \equiv v$. 
\end{proof}

\begin{lemma}\label{pos:ground state}
    Let $u$ be the nonnegative minimizer for $I_p$. Then $u>0 $ on $\overline \Omega$.
\end{lemma}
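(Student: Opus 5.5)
The plan is to combine Lemma~\ref{pos:lem} with a comparison of energies. By Lemma~\ref{pos:lem}, for each connected component $\omega$ of $\o$, either $u\equiv 0$ on $\overline\omega$ or $u>0$ on $\overline\omega$. So it suffices to rule out a component $\omega_0$ on which $u\equiv0$. Suppose such an $\omega_0$ exists. Since $u$ has compact support (Lemma~\ref{compact:sup:lem}) and is continuous, I will build a competitor $v:=u+\phi$ with strictly lower energy, where $\phi$ lives near $\omega_0$. This contradicts $I_p(u)=\mu_p$.

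\textbf{Choice of $\phi$.} Take the positive solution $u_0$ of \eqref{o0} on $\omega_0$, i.e.\ the solution in $D^{1,2}_0(\omega_0)$ of $-\Delta u_0=u_0^{p-1}$, or the torsion function when $p=1$. Then $I_p(u_0)<0$, as in Lemma~\ref{lem:minimizer}. Set $\phi:=t u_0$ with $t>0$ small. Since $\phi$ is supported in $\overline{\omega_0}$, where $u=0$, we have $|u+\phi|^p=|u|^p+|\phi|^p$ pointwise. Hence the potential terms split:
\[
\int_{\rn}Q_\o|u+\phi|^p=\int_{\rn}Q_\o|u|^p+t^p\int_{\omega_0}u_0^p .
\]
For the gradient term, $\|u+\phi\|^2=\|u\|^2+2t\int\nabla u\cdot\nabla u_0+t^2\|u_0\|^2$.

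\textbf{Cross term.} The cross term $\int\nabla u\cdot\nabla u_0=\int_{\omega_0}\nabla u\cdot\nabla u_0$ vanishes, because $u\in\cC^{1,\alpha}_{loc}$ and $u\equiv0$ on $\overline{\omega_0}$, so $\nabla u=0$ on $\omega_0$. Therefore
\[
I_p(u+tu_0)=I_p(u)+\frac{t^2}{2}\|u_0\|^2-\frac{t^p}{p}\int_{\omega_0}u_0^p<I_p(u)
\]
for small $t>0$, since $p<2$ and $\int_{\omega_0}u_0^p>0$. This contradicts minimality, so $u$ cannot vanish on any component, and Lemma~\ref{pos:lem}(i) gives $u>0$ on $\overline\o$.

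\textbf{Main obstacle.} The only delicate point is justifying that $\nabla u\equiv0$ on $\omega_0$, so that the cross term drops and the $L^p$ terms add exactly. This follows from the $\cC^1$ regularity of Lemma~\ref{Linfty:lem}, or even just from $u=0$ a.e.\ on the open set $\omega_0$. One also needs $u_0\in X^p$; this holds because $u_0$ is bounded with bounded support. The same computation covers $p=1$ without change, since $I_1$ is used directly and no derivative of $I_1$ is needed.
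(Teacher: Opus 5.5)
Your proof is correct and follows the same idea as the paper: add a small multiple of a function supported in the component where $u$ vanishes, observe that the gradient cross term and the $L^p$ terms decouple, and use $p<2$ to lower the energy. The only difference is that you keep the inequality strict and contradict minimality directly, whereas the paper settles for $I_p(u_\lambda)\le\mu_p$ and then invokes uniqueness of the nonnegative minimizer (Theorem~\ref{uniquenessOmega}). Your version therefore needs neither the uniqueness result nor nonnegativity of the perturbation, so it is marginally more self-contained.
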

\begin{proof}
    If $\Omega$ is connected, this immediately follows from Lemma~\ref{pos:lem}. 
    
    Let us assume that $\Omega$ has two connected components $\omega_1$ and $\omega_2$ (the general case being completely analogous).
    Assume without loss of generality that $u$ is non trivial in $\omega_1$, thus, by Lemma~\ref{pos:lem}, one has $u >0$ on $\overline \omega_1$. 
Again by the maximum principle, $u>0$ or $u \equiv 0$ in $\overline \omega_2$. Assume by contradiction that $u \equiv 0$ in $\omega_2$. Let us take any $\psi \in C_c^\infty(\omega_2)$, $\lambda >0$, and consider $\varphi_\lambda:= \lambda \psi$. Thus
\[ \int_{\omega_2} |\nabla \varphi_\lambda|^2 = \lambda^2 \int_{\omega_2} |\nabla \psi|^2, \quad \int_{\omega_2} |\varphi_\lambda|^p = \lambda^p \int_{\omega_2} |\psi|^p. \]
Since $p \in [ 1, 2)$, we can choose $\lambda>0$ small enough such that 
\[ \lambda^2 \int_{\omega_2} |\nabla \psi|^2 - \frac 2p \lambda^p \int_{\omega_2} |\psi|^p \le 0. \]
Define $u_\lambda :=u+ \varphi_\lambda$. Then
\begin{align*} I_p(u_\lambda) & = \frac 12 \int_{\R^N} |\nabla u_\lambda|^2 - \frac 1p \int_\Omega |u_\lambda|^p + \frac 1p \int_{\R^N \setminus \Omega} |u_\lambda|^p \\
&= \frac 12 \int_{\R^N} |\nabla u|^2 + \frac 12 \int_{\omega_2} |\nabla \varphi_\lambda|^2 - \frac 1p \int_{\omega_1} |u|^p - \frac 1p \int_{\omega_2} |\varphi_\lambda|^p +\frac 1p \int_{\R^N \setminus \Omega} |u|^p \\  
&\le I_p(u) = \mu_p.
\end{align*}
Thus $u_\lambda$ is a nonnegative minimizer for $I_p$ and, by uniqueness (see Theorem~\ref{uniquenessOmega}), it must coincide with $u$, a contradiction. 
\end{proof}

\subsection{Starshapedness and Lipschitz regularity}

In this section, we show that if $w\in X^p$ is the nonnegative ground state of \eqref{sublinear} and $\Omega$ is a smooth bounded starshaped domain, then the support of $w$ is also starshaped. Furthermore, if $\Omega$ is strictly starshaped, then the boundary of the support of $w$ is locally the graph of a Lipschitz function. 

To show this, we follow \cite{spruck}, where the author studies the following problem 
\begin{align}\label{sublinear2}
    -\Delta v = f(x)v^\beta\quad \text{ in }\rn,\qquad v\geq 0,
\end{align}
where $\beta\in (0,1)$ and $f$ is a locally Lipschitz function satisfying that $x\cdot \nabla f\leq 0.$  Problem \eqref{sublinear2} is closely related to \eqref{sublinear}, but our coefficient $Q_\Omega$ has a discontinuity on $\partial \Omega$, and this yields some complications in some of the arguments in \cite{spruck}.

\begin{remark}
The proofs of the results \cite[Lemma 3.2, Corollary 3.3, Corollary 3.4, and Lemma 3.5]{spruck} carry over to our setting word by word, by noting that their set $\{f<0\}$ is simply $\rn\backslash \Omega$ in our context. As a consequence, \cite[Corollary 3.6]{spruck} also holds for \eqref{sublinear} with $p\in(1,2)$, and we obtain that, if $w$ is the nonnegative ground state of \eqref{sublinear}, then there is $C>0$ such that
\[
w(x)\leq \dist(x,\mathbb R\backslash K)^\frac{2}{2-p}\qquad \text{ for all }x\in \rn.
\]
We do not need this estimate in this paper, but we mention it as a remark of independent interest. 
\end{remark}

The following is the analog of \cite[Lemma 3.8]{spruck} for \eqref{sublinear}. The proof follows a Bernstein method to obtain a priori gradient bounds (as in \cite{spruck}). 

\begin{lemma}\label{lem:bdsSpruck}
Let $w$ be the nonnegative ground state of \eqref{sublinear}, then there is $C>0$ such that
\begin{align*}
    |\nabla w|^2\leq Cw^{p}\qquad \text{for all $x\in K:=\supp(w)$.}
\end{align*}
\end{lemma}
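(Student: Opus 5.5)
The plan is a Bernstein-type maximum principle argument for an auxiliary function, following \cite[Lemma 3.8]{spruck}. I take $p\in(1,2)$ as in Theorem~\ref{thm:starshaped:intro}, set $\beta:=p-1\in(0,1)$, and work with
\[
P:=|\nabla w|^2-C\,w^{p}
\]
on the open positivity set $\{w>0\}$, with $C>0$ to be fixed. The goal is to show $P\le 0$. The difficulty specific to our setting is the jump of $Q_\Omega$ across $\partial\Omega$, where $w$ is only $\cC^{1,\alpha}$. I will keep $\partial\Omega$ away from the region where the maximum principle is used.

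\textbf{Step 1 (neighbourhood of $\overline\Omega$).} By Lemma~\ref{pos:ground state}, $w>0$ on $\overline\Omega$. By continuity there are a bounded open neighbourhood $U\supset\overline\Omega$ and $m>0$ with $w\ge m$ on $\overline U$. By Lemma~\ref{Linfty:lem} and Lemma~\ref{compact:sup:lem}, $\nabla w$ is bounded on $\rn$, say $|\nabla w|\le M$. Hence, once $C\ge M^2m^{-p}$, we have $P\le 0$ on $\overline U$, in particular on $\partial U$. This is the only place where the structure inside $\Omega$ enters.

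\textbf{Step 2 (exterior region).} On $D:=\{w>0\}\smallsetminus\overline U$ the function $w$ is positive and solves $\Delta w=w^{\beta}$, where $t\mapsto t^\beta$ is smooth for $t>0$. Hence $w\in\cC^\infty(D)$ and $D$ is bounded by Lemma~\ref{compact:sup:lem}. On $\partial D\smallsetminus\overline U\subset\partial\{w>0\}$ we have $w=0$, and since $w\ge0$ is $\cC^1$ and attains its minimum there, also $\nabla w=0$. Thus $P=0$ on this part of $\partial D$, and $P\le0$ on the rest of $\partial D$ by Step 1. It therefore suffices to rule out an interior positive maximum of $P$ in $D$.

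\textbf{Step 3 (interior maximum).} Suppose $P$ has a positive maximum at $x_0\in D$. Using $\Delta w=w^\beta$ I will compute
\[
\Delta|\nabla w|^2=2|D^2w|^2+2\beta w^{\beta-1}|\nabla w|^2,\qquad
\Delta(w^p)=p\,w^{2\beta}+p\beta\, w^{\beta-1}|\nabla w|^2 .
\]
The condition $\nabla P(x_0)=0$ gives $2D^2w\,\nabla w=Cp\,w^{\beta}\nabla w$, so $\nabla w$ is an eigenvector of $D^2w$ with eigenvalue $\tfrac{Cp}{2}w^\beta$. Since the trace of $D^2w$ equals $w^\beta$, the remaining eigenvalues sum to $(1-\tfrac{Cp}{2})w^\beta$. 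This yields $|D^2w|^2\ge \tfrac{C^2p^2}{4}w^{2\beta}+\tfrac{1}{N-1}\big(\tfrac{Cp}{2}-1\big)^2w^{2\beta}$. One then needs $\Delta P(x_0)>0$ to contradict maximality.

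\textbf{Main obstacle.} The $w^{2\beta}$ terms are favourable for $C$ large. The term $\beta(2-Cp)w^{\beta-1}|\nabla w|^2$, however, is negative when $C>2/p$, and on $\{P>0\}$ it dominates, since there $|\nabla w|^2>Cw^{p}$. If this happens, I would first replace $P$ by the quotient form $\tilde P:=|\nabla w|^2w^{-p}$ on $D$. Alternatively I would use $P=|\nabla w|^2-g(w)$ with $g$ chosen so that the $|\nabla w|^2$-coefficient in $\Delta P$ has a good sign, for instance $g'' = 2\beta w^{\beta-1}$ adjusted, in the spirit of the one-dimensional identity $w'^2=\tfrac{2}{p}w^p$ for $w''=w^\beta$. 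In either form the plan is the same: redo the computation with the $\nabla P=0$ relation and the eigenvalue splitting of $D^2w$, which closes the argument in \cite{spruck}. With the maximum principle in $D$ and Step 1, one obtains $|\nabla w|^2\le Cw^p$ on all of $K$.
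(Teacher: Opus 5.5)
Your Steps~1 and~2 are fine, but there is a genuine gap, and it is exactly the ``main obstacle'' you flag: Step~3 is never completed, and for $P=|\nabla w|^2-Cw^p$ it cannot be.

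Step~1 needs $C\ge\max_{\overline U}|\nabla w|^2w^{-p}$, which need not be $\le 2/p$. So the coefficient $\beta(2-Cp)w^{\beta-1}$ of $|\nabla w|^2$ in $\Delta P$ is in general negative. Moreover, $\nabla P(x_0)=0$ only gives Hessian information of size $w^{2\beta}$, namely the eigenvalue $\tfrac{Cp}{2}w^\beta$. At $x_0$ you know only $|\nabla w|^2>Cw^p$, with no upper bound on $|\nabla w|^2/w^p$, so the bad term cannot be absorbed.

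Your additive fallback $P=|\nabla w|^2-g(w)$ has the same tension. A nonnegative $|\nabla w|^2$-coefficient needs $g''\le 2\beta w^{\beta-1}$, and the desired conclusion needs $g(w)=O(w^p)$ at $0$. Integrating, these together give $g(w)\le \tfrac{2}{p}w^p$, so $P\le 0$ on $\partial U$ is not guaranteed. Adding terms such as $aw+b$ fixes $\partial U$ but destroys the estimate near the free boundary.

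Your quotient fallback $|\nabla w|^2w^{-p}$ is the right object, but as stated it breaks your Step~2. It is a $0/0$ expression on $\partial\{w>0\}$ whose boundedness is precisely the content of the lemma. So you know neither its boundary values nor that its supremum over $D$ is attained.

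The missing idea, which is the paper's argument, is to regularize the quotient. Set $\xi_\eps:=|\nabla w|^2/(w+\eps)^p$; it is continuous on $\rn$ and vanishes wherever $w=0$ (there $\nabla w=0$), so it attains its maximum at some $x_\eps$.
\begin{itemize}
\item If $x_\eps\in\overline\Omega$, then $\xi_\eps(x_\eps)\le\max_{\overline\Omega}|\nabla w|^2/w^p<\infty$, by positivity of $w$ on $\overline\Omega$ (your Step~1).
\item Otherwise, assuming $\xi_\eps\not\equiv 0$, we have $w(x_\eps)>0$ and $w$ is smooth near $x_\eps$ with $\Delta w=w^{p-1}$. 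Choose coordinates with $\nabla w(x_\eps)=|\nabla w(x_\eps)|e_1$. Then $\nabla\xi_\eps(x_\eps)=0$ gives $\partial_{11}w(x_\eps)=\tfrac{p}{2}|\nabla w|^2/(w+\eps)$, hence $|D^2w|^2\ge\tfrac{p^2}{4}|\nabla w|^4/(w+\eps)^2$. This is quadratic in $|\nabla w|^2$, so it dominates the terms linear in $|\nabla w|^2$. Inserting it into $\Delta\xi_\eps(x_\eps)\le 0$ produces the favourable term $\tfrac{p(2-p)}{2}(w+\eps)^{-p-2}|\nabla w|^4$. This yields $\tfrac{p(2-p)}{2}\xi_\eps^2\le(2-p)\xi_\eps$ at $x_\eps$, i.e.\ $\xi_\eps(x_\eps)\le 2/p$.
\end{itemize}
Both bounds are independent of $\eps$, and letting $\eps\to0$ gives the lemma. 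The scale invariance of the quotient is what makes its critical-point condition carry second-derivative information of order $|\nabla w|^4/w^2$. No additive $P$ with a fixed $C$ provides this.
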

\begin{proof}
For $\varepsilon > 0$, let $\xi_\eps := \frac{|\nabla w|^{2}}{(w+\varepsilon)^{p}}.$  Note that $\xi_\eps \ge 0$ in $\rn$ and $\xi_\eps=0$ on $\partial K$. Suppose $\xi_\eps$ achieves its maximum at some $x_\eps$.  If $x_\eps\in  \overline{\Omega}$ for all $\eps>0,$ then $x_\eps\to x_0$ for some $x_0\in \overline{\Omega}$, $w(x_0)>0,$ and $\xi_\eps(x_\eps)\to \frac{|\nabla w|^{2}}{w^{p}}(x_0)=:C$ as $\eps\to 0;$ namely, 
\begin{align*}
\frac{|\nabla w(x)|^{2}}{(w(x)+\eps)^{p}}\leq \max_{\rn}
\frac{|\nabla w|^{2}}{(w+\eps)^{p}}
=\xi_\eps(x_\eps)=C+o(1)\qquad 
\text{for any $x\in \rn$}.
\end{align*}
Since $C>0$ is independent of $\eps$, the claim would follow. 

Hence, passing to a subsequence, we may assume that $x_\eps\in K\backslash \overline{\Omega}$ for all $ \eps>0$. Observe that
\begin{align}\label{nablaeq}
    \nabla \xi_\eps 
    =\nabla \left( \frac{|\nabla w|^{2}}{(w+\varepsilon)^{p}} \right)
    =\frac{\nabla (|\nabla w|^{2})}{(w+\varepsilon)^{p} }
- p \frac{|\nabla w|^{2} \nabla w}{(w+\eps)^{p+1}}. 
\end{align}
We use $K^\circ$ to denote the interior of $K$.  For every open set $D\subset\subset K^\circ \backslash \overline{\Omega},$ there is $c_D>0$ such that $w>c_D$ in $D;$ in particular, $w^{p-1}\in C^\infty(D)$. Then, using Schauder estimates we have that $w$ is smooth in $K^\circ \backslash \overline{\Omega}$. Moreover, 
\begin{align*}
\Delta \xi_\varepsilon
&= \operatorname{div}\left((w+\varepsilon)^{-p}\nabla(|\nabla w|^{2})\right)
- p\operatorname{div}\left((w+\varepsilon)^{-(p+1)}|\nabla w|^{2}\nabla w\right) \\
&= (w+\varepsilon)^{-p}\Delta(|\nabla w|^{2})
- p (w+\varepsilon)^{-(p+1)} \nabla w\cdot\nabla(|\nabla w|^{2})
- p (w+\varepsilon)^{-(p+1)} \nabla(|\nabla w|^{2})\cdot\nabla w\\
&\quad - p (w+\varepsilon)^{-(p+1)} |\nabla w|^{2}\Delta w + p(p+1)(w+\varepsilon)^{-(p+2)} |\nabla w|^{4}
\qquad \text{ in }K^\circ \backslash \overline{\Omega}.
\end{align*}

Using the identity $\Delta(|\nabla w|^{2})
= 2|D^{2}w|^{2} + 2\nabla w\cdot\nabla(\Delta w)$ and \eqref{sublinear},
\begin{align*}
\Delta \xi_\varepsilon
&= 2(w+\varepsilon)^{-p}
\bigl(|D^{2}w|^{2} + (p-1)|\nabla w|^2w^{p-2}\bigr)- 2p (w+\varepsilon)^{-(p+1)} \nabla w\cdot\nabla(|\nabla w|^{2}) \\
&\quad
- p (w+\varepsilon)^{-(p+1)} |\nabla w|^{2} w^{p-1}
+ p(p+1)(w+\varepsilon)^{-(p+2)} |\nabla w|^{4}\qquad \text{ in }K^\circ \backslash \overline{\Omega}.
\end{align*}

Now, using that $\nabla \xi_\eps(x_\eps) =0$ and \eqref{nablaeq}, 
\begin{align}\label{atxeps}
\nabla w(x_\eps)\cdot\nabla(|\nabla w(x_\eps)|^{2})
= p \frac{|\nabla w(x_\eps)|^{4}}{w(x_\eps)+\varepsilon},    
\end{align}
and then
\begin{align}
0\geq \Delta \xi_\varepsilon(x_\eps)
&= \Bigg[(w+\varepsilon)^{-p}
\Bigl(2|D^{2}w|^{2} + 2(p-1) w^{p-2} |\nabla w|^{2}\Bigr) \notag\\
&\quad
- p (w+\varepsilon)^{-(p+1)} w^{p-1} |\nabla w|^{2}
+ p(1-p)(w+\varepsilon)^{-(p+2)} |\nabla w|^{4}\Bigg](x_\eps).\label{2}
\end{align}

Since $|\nabla w(x_\eps)|>0$ we may rotate coordinates so that
\begin{align*}
|\nabla w(x_\eps)| = \partial_1 w(x_\eps) > 0\qquad \text{ and }\qquad \partial_jw(x_\eps)=0\quad \text{ for $j=2,\ldots,N$.}     
\end{align*}
Then, by \eqref{atxeps},
\[
2 \partial_{11}w(x_\eps) \partial_1w(x_\eps)
= p \frac{(\partial_1 w)^{3}}{w+\varepsilon}(x_\eps),\qquad \text{i.e.,}\qquad 
\partial_{11}w(x_\eps)
= \frac{p}{2}\frac{|\nabla w|^{2}}{w+\varepsilon}(x_\eps).
\]
Hence, $|D^{2}w|^{2}(x_\eps)\ge (\partial_{11}w(x_\eps))^{2}
= \frac{p^{2}}{4}\frac{|\nabla w(x_\eps)|^{4}}{(w(x_\eps)+\varepsilon)^{2}}.$ Using this and \eqref{2},
\[
0 \ge
\left[\frac{p(2-p)}{2}(w+\varepsilon)^{-(p+2)}|\nabla w|^{4}
+ (w+\varepsilon)^{-(p+1)} w^{p-2}
\bigl(2(p-1)(w+\varepsilon)-pw\bigr)|\nabla w|^{2}\right](x_\eps).
\]

Using that $\xi_\eps = \frac{|\nabla w|^{2}}{(w+\varepsilon)^{p}}$ and rearranging some terms, we obtain that 
$\frac{p(2-p)}{2}\xi_\eps^2 \leq 
B_\eps(x_\eps) \xi_\eps(x_\eps),$ where
\begin{align*}
B_\eps:=-\frac{w^{p-2}\bigl(2(p-1)(w+\varepsilon)-pw\bigr)}
{(w+\varepsilon)^{p-1}}
=\frac{w^{p-2}\bigl((2-p)w-2(p-1)\varepsilon\bigr)}
{(w+\varepsilon)^{p-1}}\leq
\frac{w^{p-2}\bigl((2-p)w\bigr)}
{(w+\varepsilon)^{p-1}}\leq 2-p.
\end{align*}
This implies that $\xi_\varepsilon(x_\varepsilon)\le (2-p)\frac{2}{(2-p)p}=\frac{2}{p}$ for all $\eps>0$ and the claim follows letting $\eps\to 0$.
\end{proof}

\begin{theorem}\label{thm:starshaped}
Let $\o$ be a smooth bounded domain and let $w$ be the nonnegative ground state of \eqref{sublinear} with $p\in(1,2)$.
If $\o$ is starshaped with respect to the origin , then 
$$\t:=\{x\in\rn:w(x)>0\}$$
is starshaped with respect to the origin.
\end{theorem}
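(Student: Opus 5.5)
We will follow the scaling approach of \cite{spruck}, using the uniqueness of the nonnegative ground state (Theorem~\ref{uniquenessOmega}) instead of a continuity-in-$\lambda$ argument. For $\lambda\ge 1$ set
\[
w_\lambda(x):=\lambda^{-\frac{2}{2-p}}\,w(\lambda x).
\]
By Remark~\ref{scaling} (with $r=1/\lambda$), $w_\lambda$ is the nonnegative ground state of \eqref{sublinear} with $\Omega$ replaced by $\Omega_\lambda:=\lambda^{-1}\Omega$. Since $\Omega$ is starshaped with respect to the origin, $\Omega_\lambda\subset\Omega$, and hence $Q_{\Omega_\lambda}\le Q_\Omega$.

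The goal is to show $w_\lambda\le w$ in $\rn$ for every $\lambda\ge1$. This yields starshapedness: if $x\in\Theta$ and $t\in(0,1]$, take $\lambda=1/t$, so that $w(tx)=\lambda^{-2/(2-p)}w(x)\cdot\lambda^{2/(2-p)}\ldots$. More directly, $w(\lambda y)=\lambda^{2/(2-p)}w_\lambda(y)$. Writing $x=\lambda y$ with $y=tx$, we get
\[
w(x)=\lambda^{\frac{2}{2-p}}w_\lambda(tx)\le\lambda^{\frac{2}{2-p}}w(tx),
\]
so $w(x)>0$ forces $w(tx)>0$, i.e.\ $tx\in\Theta$.

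To prove $w_\lambda\le w$ we use the energy/hidden-convexity argument. We compare the functionals $I_p$ (for $\Omega$) and $I_p^{\lambda}$ (for $\Omega_\lambda$) at $\max\{w,w_\lambda\}$ and $\min\{w,w_\lambda\}$. First,
\[
\|\max\|^2+\|\min\|^2=\|w\|^2+\|w_\lambda\|^2 .
\]
Second, the $L^p$ terms split pointwise. On the set $A:=\{w_\lambda>w\}$ we have
\[
\int Q_\Omega\max^p+\int Q_{\Omega_\lambda}\min^p-\int Q_\Omega w^p-\int Q_{\Omega_\lambda}w_\lambda^p=\int_A (Q_\Omega-Q_{\Omega_\lambda})(w_\lambda^p-w^p)\ge0,
\]
because $Q_\Omega\ge Q_{\Omega_\lambda}$ and $w_\lambda^p>w^p$ on $A$. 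Therefore
\[
I_p(\max)+I_p^\lambda(\min)\le I_p(w)+I_p^\lambda(w_\lambda)=\mu_p+\mu_p^\lambda .
\]
Since each term on the left is at least the corresponding minimum, both inequalities are equalities. Thus $\max\{w,w_\lambda\}$ is a nonnegative minimizer of $I_p$, and Theorem~\ref{uniquenessOmega} gives $\max\{w,w_\lambda\}=w$, i.e.\ $w_\lambda\le w$. The membership $\max,\min\in X^p$ is immediate since lattice operations preserve $D^{1,2}\cap L^p$. The argument is identical for $p=1$, but only $p\in(1,2)$ is needed here.

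The main obstacle is making sure the sign bookkeeping in the pointwise splitting is correct. We check it directly on the two regions. On $A$, the left side contributes $Q_\Omega w_\lambda^p+Q_{\Omega_\lambda}w^p$ and the right side $Q_\Omega w^p+Q_{\Omega_\lambda}w_\lambda^p$; their difference is $(Q_\Omega-Q_{\Omega_\lambda})(w_\lambda^p-w^p)\ge0$. Off $A$ both sides agree. The $L^p$ terms enter the energy with a minus sign, so this nonnegative difference lowers the energy, which is exactly the required direction.

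If uniqueness were unavailable, the fallback would be Spruck's Bernstein-type argument using Lemma~\ref{lem:bdsSpruck} together with the function $x\cdot\nabla w-\frac{2}{2-p}w$. The minimization route above, however, avoids the regularity issues caused by the discontinuity of $Q_\Omega$ across $\partial\Omega$.
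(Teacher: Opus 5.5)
Your proof is correct, and it takes a genuinely different route from the paper's.

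\textbf{The paper's route.} The paper argues locally with $\phi=w^{2-p}$ and $\zeta=x\cdot\nabla\phi-2\phi$. This requires three ingredients:
\begin{itemize}
\item the Bernstein gradient bound of Lemma~\ref{lem:bdsSpruck}, to get $\zeta=0$ on $\partial\Theta$;
\item the distributional identity $r\partial_r Q_\Omega=-2(\nu_\Omega\cdot x)\,\delta_{\partial\Omega}$, to absorb the jump of $Q_\Omega$;
\item a weak maximum principle for an operator whose coefficients $b,c$ blow up at $\partial\Theta$.
\end{itemize}

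\textbf{Your route.} You instead combine the exact scaling invariance of the problem, the inclusion $\lambda^{-1}\Omega\subset\Omega$, a max/min energy comparison, and uniqueness of the nonnegative minimizer. Your bookkeeping is right, since $Q_\Omega-Q_{\Omega_\lambda}=2\chi_{\Omega\setminus\Omega_\lambda}\ge 0$.

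Note that $w_\lambda\le w$ for all $\lambda\ge1$ says exactly that $r\mapsto r^{-2/(2-p)}w(r\vartheta)$ is nonincreasing. This is precisely what the paper's inequality $\zeta\le 0$ encodes, so you recover the same quantitative conclusion.

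\textbf{What each approach buys.} Your argument:
\begin{itemize}
\item avoids all regularity issues at $\partial\Omega$ and at the free boundary;
\item works verbatim for $p=1$, which the theorem excludes;
\item gives as a byproduct the monotonicity $w_{\Omega_1}\le w_{\Omega_2}$ whenever $\Omega_1\subset\Omega_2$;
\item should also yield Corollary~\ref{Cor:starshaped}, by using dilations centred at points of a small ball around the origin.
\end{itemize}
The paper's argument is a local maximum-principle argument driven by the equation rather than by minimality and uniqueness. It is therefore less tied to the variational structure and to exact scaling.

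\textbf{Two small things to tidy up.}
\begin{itemize}
\item Remark~\ref{scaling} only says $w_\lambda$ is a solution, not the minimizer. Add the one-line computation $I_{\Omega_\lambda}\bigl(\lambda^{-\frac{2}{2-p}}u(\lambda\,\cdot)\bigr)=\lambda^{-N-\frac{2p}{2-p}}I_\Omega(u)$ for every $u\in X^p$. This shows that $w_\lambda$ minimizes the functional for $\Omega_\lambda$, which your equality step needs.
\item The case $t=0$ needs $w(0)>0$. This follows from $0\in\Omega$ together with Lemma~\ref{pos:ground state}.
\end{itemize}
Also delete the garbled first attempt at the identity before ``More directly''.
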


\begin{proof}
Let $\phi:=w^{1-\beta}$ with $\beta=p-1$. Write $x\in\rn$ as $x=r\vartheta$ with $r:=|x|$ and $\vartheta:=\frac{x}{|x|}$. Note that
$$\{0\}\cup\Big\{x\in\rn\smallsetminus\{0\}:\frac{\phi(x)}{|x|^2}> 0\Big\}=\t.$$
Therefore, it suffices to show that
$$\partial_r\Big(\frac{\phi(r\vartheta)}{r^2}\Big)\leq 0\qquad\text{for all \ }x=r\vartheta\in\t\smallsetminus\{0\}.$$
Now,
$$\partial_r\Big(\frac{\phi(r\vartheta)}{r^2}\Big)=r^{-3}\big(r\partial_r\phi(r\vartheta)-2\phi(r\vartheta)\big).$$
We show next that
$$\zeta(x):=r\partial_r\phi(r\vartheta)-2\phi(r\vartheta)=x\cdot\nabla\phi(x)-2\phi(x)\leq 0\qquad\text{for all \ }x=r\vartheta\in\t.$$
Since $w\in W^{2, q}_{loc}(\R^N)$ (for any $q\geq 1$, see the proof of Lemma~\ref{Linfty:lem}), we have that $\zeta\in W^{1, q}_{loc}(\R^N)$. By Lemma~\ref{lem:bdsSpruck}, 
$|\nabla\phi|^2=(1-\beta)^2 w^{-2\beta}|\nabla w|^2\leq Cw^{1-\beta}$ in $\t$. Therefore,
$$\zeta=0\qquad\text{on \ }\partial\t.$$ 

Furthermore, $|\nabla \phi|^2= (1-\beta)^2 \phi^{-\frac{2\beta}{1-\beta}}|\nabla w|^2$ and then  
\begin{align*}
|\nabla w|^2=\frac{1}{(1-\beta)^2}|\nabla \phi|^2 \phi^{\frac{2\beta}{1-\beta}}.
\end{align*}
Moreover, $\partial_{ii}\phi
=\partial_i((1-\beta)w^{-\beta}\partial_i w)
=(1-\beta) \left(
(-\beta)w^{-\beta-1}|\partial_iw|^2+w^{-\beta}\partial_{ii} w
\right),$ which yields
\begin{align}\label{Deltaphi}
-\Delta \phi 
&= (1-\beta) \left(
\beta w^{-\beta-1}|\nabla w|^2+w^{-\beta}(-\Delta w)
\right)= (1-\beta) \left(
\beta \phi^{\frac{-\beta-1}{1-\beta}}
\left(\frac{1}{(1-\beta)^2}|\nabla \phi|^2 \phi^{\frac{2\beta}{1-\beta}}\right)
+w^{-\beta}Q_\Omega w^\beta
\right)\nonumber\\
&=
\frac{\beta }{(1-\beta)}\frac{|\nabla \phi|^2}{\phi}
+(1-\beta)Q_\Omega\quad \text{ in }\Theta.
\end{align}

Noting that $\Delta\zeta=x\cdot\nabla(\Delta\phi)$ and using equation \eqref{Deltaphi}, a straightforward computation shows that
\begin{align*}
 -\Delta \zeta  + b\cdot\nabla \zeta + c\zeta&=(1-\beta)r \partial_r (Q_\Omega)\quad \text{ in \ }\t,\qquad 
 b:=-\frac{2\beta}{1-\beta} \frac{\nabla\phi}{\phi},\quad 
 c:=\frac{\beta}{1-\beta} \frac{|\nabla\phi|^2}{\phi^2} \geq 0,
\end{align*}
in the sense of distributions.  Recall that $\partial_r \chi_\Omega
= -\left(\nu_\Omega(x)\cdot \frac{x}{|x|}\right)\,\delta_{\partial\Omega}$ in distributional sense, where $\nu_\Omega$ is the outer normal vector on $\partial \Omega$ and $\delta_{\partial\Omega}$ is the Dirac distribution supported on $\partial \Omega$.  As $Q_\Omega=\chi_\Omega-\chi_{\rn\backslash \Omega},$ we get
\begin{align*}
r \partial_r (Q_\Omega)
=
r\partial_r \chi_\Omega - r\partial_r \chi_{\Omega\backslash \rn}
=
-\left(\nu_\Omega(x)\cdot x\right)\,\delta_{\partial\Omega}+\left(\nu_{\rn\backslash\Omega}(x)\cdot x\right)\,\delta_{\partial\Omega}
=-2\left(\nu_\Omega(x)\cdot x\right)\,\delta_{\partial\Omega}
\end{align*}
in distributional sense. Hence,
\begin{align*}
\int_\Omega \nabla \zeta \nabla \varphi
+(b\cdot\nabla \zeta) \varphi + c\zeta\varphi = -2(1-\beta)\int_{\partial\Omega}\left(\nu_\Omega(x)\cdot x\right)\varphi \d\sigma\leq 0\qquad 
\text{for all \ }\varphi\in \cC^\infty_c(\t), \ \varphi\geq 0,
\end{align*}
where we used that $\nu_\Omega(x)\cdot x\geq 0$ for $x\in \partial \Omega$, because $\Omega$ is a starshaped domain. Then, the weak maximum principle \cite[Theorem 8.1]{GT} implies that $\zeta\leq 0$ a.e. in $\t$, as claimed. This completes the proof.
\end{proof}

\begin{corollary}\label{Cor:starshaped}
Under the assumptions of Theorem~\ref{thm:starshaped}, if $\Omega$ is strictly starshaped with respect to the origin, then $\partial \t$ is Lipschitz. 
\end{corollary}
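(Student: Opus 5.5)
The plan is to upgrade the inequality $\zeta=x\cdot\nabla\phi-2\phi\le 0$ from the proof of Theorem~\ref{thm:starshaped} to a strict, quantitative one near $\partial\t$. Then, near each point of the free boundary, the radial derivative of $\phi$ is strictly negative in a uniform way. A standard implicit-function argument for starshaped sets then shows that $\partial\t$ is a radial graph $r=\rho(\vartheta)$ with $\rho$ Lipschitz. Here, as before, $\phi=w^{2-p}$ and $\beta=p-1$.

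\textbf{Step 1: strict negativity of $\zeta$.} If $\Omega$ is strictly starshaped, then $\nu_\Omega(x)\cdot x\ge c_0>0$ on $\partial\Omega$. The measure on the right-hand side of the equation for $\zeta$, namely $-2(1-\beta)(\nu_\Omega\cdot x)\,\delta_{\partial\Omega}$, is therefore a nontrivial nonpositive measure. Since $\zeta\le 0$ and $\zeta$ is a supersolution-type object, I would apply the strong maximum principle to the operator $-\Delta+b\cdot\nabla+c$ on each connected component of $\t\smallsetminus\overline\Omega$ and of $\Omega$. The coefficients $b,c$ are locally bounded in $\t$, because $w$ is smooth and positive there. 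The conclusion is $\zeta<0$ in $\t$; at least, $\zeta\not\equiv 0$ near $\partial\Omega$ forces strict negativity everywhere by connectedness of $\t$, which is starshaped and hence connected.

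\textbf{Step 2: quantitative bound near $\partial\t$ (the main obstacle).} Strict negativity in the interior is not enough, because $\zeta=0$ on $\partial\t$. What is needed is $x\cdot\nabla\phi\le -c<0$ in a neighborhood of $\partial\t$ inside $\t$. There, $\phi\to 0$, so this amounts to a nondegeneracy of the gradient of $\phi$ at the free boundary. I would first try the following. From \eqref{Deltaphi}, in $\t\smallsetminus\overline\Omega$ we have $-\Delta\phi=\frac{\beta}{1-\beta}\frac{|\nabla\phi|^2}{\phi}-(1-\beta)$. Near $\partial\t$, Lemma~\ref{lem:bdsSpruck} keeps $|\nabla\phi|^2/\phi$ bounded. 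A blow-up or Hopf-type argument for $\phi$ (or a comparison with the explicit one-dimensional profile $h$ from Lemma~\ref{compact:sup:lem}) should then give $|\nabla\phi|^2\ge c\,\phi$, i.e.\ a nondegeneracy of the same order as the upper bound. Combined with $\zeta\le 0$, this yields $x\cdot\nabla\phi\le 2\phi$ only, which is not yet strict. So I would instead run the maximum principle for $\zeta+\varepsilon\phi$, or for $\zeta/\phi$, on $\t\smallsetminus B_\delta(\overline\Omega)$. The strict sign coming from $\partial\Omega$ can be propagated by a barrier to show $\zeta\le -c\,\phi$ near $\partial\t$. Then $x\cdot\nabla\phi\le (2-c)\phi$, and in terms of $g:=\phi^{1/2}\sim\operatorname{dist}(x,\partial\t)$ this reads $x\cdot\nabla g\le (1-c/2)g$. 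This forces the radial derivative of $g$ to be bounded away from $0$ relative to $|\nabla g|$, because $|\nabla g|$ is bounded above and below near $\partial\t$.

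\textbf{Step 3: conclusion.} Write $\partial\t=\{\rho(\vartheta)\vartheta\}$, which is well defined and with $\rho$ bounded away from $0$ and $\infty$, since $\t\supset\overline\Omega\ni 0$ and $\t$ is bounded by Lemma~\ref{compact:sup:lem}. Step 2 says that the radial direction lies in a uniform cone transversal to the level sets of $g$ near $\partial\t$. Equivalently, $\t$ is starshaped with respect to every point of a small ball $B_\eta(0)$. This cone condition makes $\partial\t$ locally a Lipschitz graph, with Lipschitz constant controlled by $c$.
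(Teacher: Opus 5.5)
There is a genuine gap in your Step~2. The nondegeneracy $|\nabla\phi|^2\ge c\,\phi$ and the barrier yielding $\zeta\le -c\,\phi$ are only asserted. Lemma~\ref{lem:bdsSpruck} gives just the upper bound $|\nabla\phi|^2\le C\phi$, and a pointwise gradient nondegeneracy at the free boundary is a nontrivial statement. More seriously, even if both held, $\zeta\le -c\,\phi$ has the wrong order. With $g=\phi^{1/2}$ it gives $x\cdot\nabla g\le (1-c/2)g$, and since $g\to 0$ at $\partial\Theta$ this only yields $\limsup x\cdot\nabla g\le 0$ there. That is exactly what $\zeta\le 0$ already gave ($x\cdot\nabla g\le g$). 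So the radial derivative is not shown to be negative, let alone uniformly transversal.

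Your ``equivalently'' in Step~3 fails for the same reason. The criterion from Theorem~\ref{thm:starshaped} for starshapedness with respect to $x_0$ is $\zeta_{x_0}:=(x-x_0)\cdot\nabla\phi-2\phi\le 0$, i.e. $\zeta\le x_0\cdot\nabla\phi$. Requiring this for all $x_0\in B_\eta(0)$ amounts to $\zeta\le -\eta|\nabla\phi|$. Near $\partial\Theta$ that is of order $\phi^{1/2}$, which is much stronger than $-c\,\phi$.

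The missing idea is to move the center rather than sharpen the estimate at the origin; this is what the paper does. Strict starshapedness and compactness of $\partial\Omega$ give $\varepsilon>0$ with $\nu_\Omega(x)\cdot(x-x_0)>0$ for all $x\in\partial\Omega$ and $x_0\in B_\varepsilon(0)$. The computation in Theorem~\ref{thm:starshaped} is translation invariant. Hence $\zeta_{x_0}$ satisfies the same equation as $\zeta$, with right-hand side $-2(1-\beta)(\nu_\Omega\cdot(x-x_0))\,\delta_{\partial\Omega}\le 0$, and $\zeta_{x_0}=0$ on $\partial\Theta$ by Lemma~\ref{lem:bdsSpruck}. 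The same weak maximum principle gives $\zeta_{x_0}\le 0$. Thus $\Theta$ is starshaped with respect to every point of $B_\varepsilon(0)$, which is the bound $\zeta\le-\varepsilon|\nabla\phi|$ you were missing. The interior and exterior cone conditions, and hence the Lipschitz radial graph, then follow as in your Step~3. Your Step~1 strong maximum principle, the nondegeneracy, and the barrier are all unnecessary.
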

\begin{proof}
Note that, since $\Omega$ is strictly starshaped with respect to the origin, then there is $\eps>0$ such that
\begin{align*}
\nu_\Omega(x)\cdot (x-x_0)>0\qquad 
\text{for every \ }x_0\in B_\eps(0)\text{ \ and \ }x\in \partial \Omega.
\end{align*}
Hence, we can repeat the arguments in Theorem~\ref{thm:starshaped} to guarantee that 
\begin{align*}
    \frac{d}{d|x-x_0|}\left( \frac{\phi}{|x-x_0|^2} \right) \leq 0 \qquad \text{ a.e. in } \t.
\end{align*}
This yields that $\partial\t$ satisfies the interior and the exterior cone condition and the claimed Lipschitz regularity follows. 
\end{proof}

\subsection{Asymptotic behavior of nonnegative ground states as 
\texorpdfstring{$p \to 1^+$}{pto1}}
Let  $w_p \in X^p$ be such that 
\[ I_p(w_p)=\mu_p\qquad\text{and}\qquad w_p\geq 0. \]

Next, let us show an auxiliary lemma. 
\begin{lemma}\label{bounded Xp}
Let $p \in [1, 2)$, and let $\mathcal{I}^-:=\{ u \in X^p: I_p(u) \le 0\}.$
\begin{itemize}
\item[$(i)$] There exists a constant $C=C(\Omega, N, p)>0$ such that 
\[ \norm{u}_{X^p} \le C \norm{u} \quad \text{ for any } u \in \mathcal{I}^-. \]
\item[$(ii)$] $\mathcal{I}^- $ is bounded in $X^p$. 
\end{itemize}
\end{lemma}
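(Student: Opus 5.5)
The plan is to get $(i)$ directly from the sign of $I_p$, and then $(ii)$ by combining it with the coercivity estimate \eqref{eq:bounded}.

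\emph{Part $(i)$.} Write $I_p$ as
\[
I_p(u)=\frac12\|u\|^2+\frac1p\int_{\rn\smallsetminus\o}|u|^p-\frac1p\io|u|^p .
\]
If $u\in\mathcal{I}^-$, then $I_p(u)\le 0$, so
\[
\int_{\rn\smallsetminus\o}|u|^p\le \io|u|^p-\frac p2\|u\|^2\le \io|u|^p .
\]
Hence $|u|_p^p\le 2\io|u|^p$. By \eqref{eq:basic inequality}, $\io|u|^p\le C\|u\|^p$. For $p>1$ this follows from H\"older and Sobolev. For $p=1$ the same H\"older step with exponent $2^*$ still works, since $\o$ is bounded. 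This gives $|u|_p\le (2C)^{1/p}\|u\|$. Therefore
\[
\|u\|_{X^p}=\|u\|+|u|_p\le \bigl(1+(2C)^{1/p}\bigr)\|u\|,
\]
with a constant depending only on $\o$, $N$ and $p$ through $|\o|$ and the Sobolev constant.

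\emph{Part $(ii)$.} By $(i)$ it suffices to bound $\|u\|$ uniformly on $\mathcal{I}^-$. From \eqref{eq:bounded}, every $u\in\mathcal{I}^-$ satisfies $0\ge I_p(u)\ge \frac12\|u\|^2-C\|u\|^p$. For $u\neq0$ this gives $\|u\|^{2-p}\le 2C$, that is, $\|u\|\le (2C)^{1/(2-p)}$. This is finite because $p<2$, and it is valid for all $p\in[1,2)$. Plugging this into $(i)$ bounds $\|u\|_{X^p}$ on $\mathcal{I}^-$.

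There is no real obstacle. The only points to check are that \eqref{eq:basic inequality} and \eqref{eq:bounded} hold verbatim at $p=1$, which they do since the H\"older step uses only $p<2^*$, and that the exponent $2-p$ is positive.
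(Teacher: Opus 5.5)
Your proposal is correct and follows essentially the same route as the paper. You use $I_p(u)\le 0$ to bound $\int_{\rn\smallsetminus\o}|u|^p$ by $\io|u|^p\le C\|u\|^p$ via \eqref{eq:basic inequality}, and then deduce $\|u\|^{2-p}\le C$ for $(ii)$; you cite \eqref{eq:bounded} where the paper rearranges \eqref{estimate 1}, but this is the same inequality. Your check that the H\"older--Sobolev step holds verbatim at $p=1$ is correct and worth making, since the paper states \eqref{eq:basic inequality} only for $p>1$.
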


\begin{proof}
   $(i):$ Let $u \in \mathcal{I}^-$. Then, using \eqref{eq:basic inequality} and the fact that $I_p(u) \le 0$, we get
    \begin{equation}\label{estimate 1} \int_{\R^N \setminus \Omega} |u|^p \le \frac p 2 \norm{u}^2 + \int_{\R^N \setminus \Omega} |u|^p \le \int_\Omega |u|^p \le C \norm{u}^p, \end{equation}
which yields
\[ \norm{u}_{X^p} = \norm{u} + |u|_p \le \tilde C \norm{u}\qquad\text{for all \ }u\in\cI^-, \]
as claimed.

$(ii):$ By \eqref{estimate 1}, 
    \begin{equation}\label{bound 1} \norm{u}^2 \le\norm{u}^2 + \int_{\R^N \setminus \Omega} |u|^p \leq C \norm{u}^p \end{equation}
     for any $u \in \mathcal{I}^-$. Hence, $\mathcal{I}^- $ is bounded in $X^p$.
\end{proof}

\begin{proposition}\label{prop conv p 1}
After passing to a subsequence, $w_p \to w_1$ in $\mathcal{C}^{1, \alpha}_{loc}(\R^N)$ and $\mu_p \to \mu_1$  as $p \to 1$.   
\end{proposition}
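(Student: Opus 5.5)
We argue by uniform bounds, compactness, and identification of the limit via the minimization property. We take $p\in(1,p_1]$ for some fixed $p_1<2$.

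\emph{Step 1: uniform bounds.} Since $I_p(w_p)=\mu_p<0$, estimate \eqref{bound 1} gives $\|w_p\|^2\le C\|w_p\|^p$. We need the constant to be uniform in $p$. In \eqref{eq:basic inequality} it has the form $C=|\Omega|^{(2^*-p)/2^*}S^{-p/2}$, with $S$ the Sobolev constant, and this is bounded for $p\in[1,p_1]$. Hence $\|w_p\|\le C^{1/(2-p)}$ is bounded uniformly, and by \eqref{estimate 1} so is $\int_{\rn\smallsetminus\o}w_p^p$.

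For regularity we use the equation. Since $0\le Q_\o w_p^{p-1}\le$ a local Moser bound, the argument of Lemma~\ref{Linfty:lem} gives $W^{2,q}_{loc}$ bounds uniform in $p$. Indeed, $|w_p^{p-1}|\le 1+w_p$, so the right-hand side is controlled linearly in $w_p$ with constants independent of $p$. This yields uniform $\cC^{1,\beta}_{loc}$ bounds for some $\beta>\alpha$.

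For uniform compact support, the constant $t_0$ in Lemma~\ref{compact:sup:lem} depends only on the bound \eqref{localbd} on $\overline\o$ and on the explicit barrier coefficient $\big(2p/(2-p)^2\big)^{1/(p-2)}$. Both are uniform for $p$ near $1$, so all $\supp(w_p)$ lie in a fixed ball $B_{t_0}$. By Arzelà--Ascoli and a diagonal argument, along a subsequence $w_p\to w$ in $\cC^{1,\alpha}_{loc}(\rn)$, with $w\ge0$, $\supp(w)\subset \overline{B_{t_0}}$, and $w_p\rh w$ in $D^{1,2}$.

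\emph{Step 2: $w$ minimizes $I_1$.} Fix $v\in X^1$. Truncating and cutting off, we may assume $v\in\cC^\infty_c$, since $I_1$ is continuous in $X^1$ and $\cC_c^\infty$ is dense; alternatively we use any $v\in X^1\cap L^\infty$ with compact support. For such $v$, $\int Q_\o|v|^p\to\int Q_\o|v|$ by dominated convergence, so $\limsup\mu_p\le\limsup I_p(v)=I_1(v)$. Hence $\limsup\mu_p\le\mu_1$.

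Conversely, uniform convergence on the fixed ball $B_{t_0}$ gives $\int Q_\o w_p^p\to\int Q_\o w$, because $w_p^p\to w$ uniformly there ($w_p$ is uniformly bounded and $t^p\to t$ uniformly on bounded sets). Weak lower semicontinuity of the norm then gives $I_1(w)\le\liminf I_p(w_p)=\liminf\mu_p$.

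Combining the two, $\mu_1\le I_1(w)\le\liminf\mu_p\le\limsup\mu_p\le\mu_1$. Thus $\mu_p\to\mu_1$ and $w$ is a nonnegative minimizer of $I_1$. By Theorem~\ref{uniquenessOmega}, $w=w_1$.

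\emph{Main obstacle.} The delicate point is the uniformity in $p$ of the constants in Steps 1--2, namely the regularity estimate and the support radius $t_0$, since $w_p^{p-1}$ degenerates toward $\sgn$. We handle this via the pointwise bound $0\le w_p^{p-1}\le 1$ wherever $w_p\le1$, and $\le w_p$ otherwise. Without uniform support we would need tightness in $L^1$ outside $\o$ to pass to the limit in $\int_{\rn\smallsetminus\o}w_p^p$. Fatou together with the $\cC^{1,\alpha}_{loc}$ convergence would suffice for the lower bound, and that is the fallback route.
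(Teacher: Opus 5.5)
Your proof is correct and follows essentially the paper's scheme. The common steps are: the upper bound $\limsup\mu_p\le\mu_1$ via $I_p(w_1)\to I_1(w_1)$; a $p$-uniform $D^{1,2}$ bound; lower semicontinuity giving $\liminf\mu_p\ge I_1(w)\ge\mu_1$; $p$-uniform elliptic $W^{2,q}_{loc}$ estimates for the $\cC^{1,\alpha}_{loc}$ convergence; and uniqueness of the nonnegative minimizer to identify the limit as $w_1$ (a step the paper leaves implicit). The main variation is in the exterior term. You pass to the limit there using a $p$-uniform support radius $t_0$ from the barrier of Lemma~\ref{compact:sup:lem}, which gives exact convergence of $\int Q_\Omega w_p^p$ and $w\in L^1$ for free. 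The paper simply applies Fatou's lemma outside $\Omega$, which is cheaper because it does not require checking that $t_0$ is uniform in $p$.
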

\begin{proof}
    By dominated convergence,
    \[ \mu_p \le I_p(w_1) = \frac 12 \int_{\R^N} |\nabla w_1|^2 - \frac 1p \int_{\R^N}  Q_\Omega |w_1|^p \to I_1(w_1) = \mu_1 \quad \text{ as } p \to 1^+, \]
whence $\lim_{p \to 1} \mu_p \le \mu_1$. 

Moreover, by H\"older and Sobolev inequalities, 
\[ \mu_p \ge \frac 12 \norm{w_p}^2 - C(\Omega) \norm{w_p}^p,  \]
where $C(\Omega)$ is a positive constant independent of $p$. The function
\[g(t)=\frac 12 t^2 - C(\Omega) t^p\]
is bounded below by 
\[ g(t) \ge \left(\frac 12 -\frac 1p \right) \left( \frac 1 {pC(\Omega)} \right)^{\frac 2{p-2}} \to -\frac 12 C(\Omega)^2. \]
Therefore, $|\mu_p|$ is bounded uniformly in $p$. 
On the other hand 
\[ \norm{w_p}^2 = \mu_p \left( \frac 12 -\frac 1p \right)^{-1}, \]
thus $\norm{w_p}$ is uniformly bounded in $p$. 

This shows that there exists $w \in D^{1, 2}(\rn)$ such that, after passing to a subsequence, $w_p \rightharpoonup w$ weakly in $D^{1,2}(\R^N)$ and 
\begin{align}\label{wpconv}
w_p \to w\qquad \text{ in $L^q_{loc}(\rn)$ for any $ q \in [1, 2^*)$.}
\end{align}
By Lemma~\ref{bounded Xp}, and since $p \to 1$, we can choose a constant $C>0$ independent of $p$ such that 
\[ |w_p|_1 \le C \norm{w_p}, \]
thus $w_p$ is uniformly bounded in $X^1$, which implies $w \in X^1$. 
We now observe that, by dominated convergence, 
%see also \cite[p. 18]{ST},  
\[ \int_\Omega |w_p|^p \to \int_\Omega |w|. \]
Thus Fatou's Lemma gives 
\begin{align*} \lim_{p \to 1^+} \mu_p \ge  I_1(w) \ge \mu_1,  \end{align*}
which yields the strong convergence in $D^{1, 2} (\R^N)$. 

To prove that the convergence is actually in $\cC^{1, \alpha}_{loc}$, let $V\subset \rn$ be an open bounded subset.  By \cite[Theorem B.2]{S00}, for any $V' \subset \subset V$ open and for any $q>1$, there exists $C>0$ independent of $p$ such that  
\begin{align}\label{regineq}
    \norm{w_p- w_1}_{W^{2, q}(V')} \le C(\norm{w_p-w_1}_{L^q(V)} + \norm{h_p}_{L^q(V)}),
\end{align}
with
\[ h_p:=Q_\Omega (|w_p|^{p-2}w_p - \sgn(w_1)). \]
By \eqref{wpconv}, it is easy to see that the right hand side in \eqref{regineq} is bounded independently of $p$. Then, $w_p-w_1$ is uniformly bounded in $W^{2, q}(V')$ for any $q >1$. By Sobolev embeddings, this implies that, up to a subsequence, $w_p\to w_1$ in $\cC^{1,\alpha}(V')$. This ends the proof. 
 \end{proof}

\subsection{Asymptotic behavior of nonnegative ground states as 
\texorpdfstring{$p \to 2^-$}{pto2}
}

It is natural to ask what is the asymptotic behavior of solutions as $p\to 2^-$.  This has been already explored for least energy solutions in \cite{CMSS}.  For completeness and for future use, here we collect some of these results.

Let us denote $w_p$ the unique nonnegative minimizer of $I_p$. 
We set 
\[ \alpha_p:= \inf_{\substack{u \in X^p \\ \int_{\R^N} Q_\Omega |u|^p=1}} \int_{\R^N} |\nabla u|^2. \]
It is proved in \cite[Theorem 1.3]{CMSS} that $\alpha_p$ is achieved at some nonnegative function $v_p \in X^p$ for any $p \in (1, 2^*) \setminus \{ 2 \}$, and that $v_p$ is a solution of  
\[ - \Delta v_p = \alpha_p Q_\Omega v_p^{p-1} \quad \text{ in } \R^N. \]
Moreover, a rescaling shows that
\begin{align}\label{rc}
w_p= \alpha_p^{\frac 1{p-2}} v_p.    
\end{align}
By \cite[Theorem 1.2]{CMSS},
\[
\Lambda_1:=\inf \left\{\int_{\R^N} |\nabla v|^2\::\: v \in D^{1,2}(\R^N),\ \int_{\R^N} Q_\Omega |v|^2=1 \right\}>0
\]
and there is an associated eigenfunction $\phi_1$ which is positive in $\rn$.
 
\begin{theorem}[Theorem 3.4 in \cite{CMSS}]\label{q1}
    Let $p \in (1, 2^*)$. Then $v_p \to \phi_1$ in $D^{1, 2}(\R^N)$ as $p\to 2$. Moreover,  
    \[ \lim_{p \to 2} \left( \frac{\Lambda_1}{\alpha_p} \right)^{\frac{1}{2-p}} = e^{-\frac 12 \int_{\R^N} Q_\Omega \phi_1^2 \text{ln}(\phi_1^2)}. \]
\end{theorem}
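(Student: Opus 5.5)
The plan is to first prove the $D^{1,2}$ convergence $v_p\to\phi_1$ by a $\Gamma$-convergence-type argument. Then I would get the limit formula by two matching one-sided expansions, testing each variational problem with the minimizer of the other.

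\emph{Step 1 (uniform bounds).} Since $\phi_1$ decays exponentially (\cite{CMSS}), it lies in $X^p$ for every $p$. By dominated convergence $\int_{\rn}Q_\Omega\phi_1^p\to\int_{\rn}Q_\Omega\phi_1^2=1$. Testing $\alpha_p$ with $\phi_1/(\int Q_\Omega\phi_1^p)^{1/p}$ gives
\[
\alpha_p\le \Lambda_1\Big(\int_{\rn}Q_\Omega\phi_1^p\Big)^{-2/p},
\]
so $\limsup_{p\to2}\alpha_p\le\Lambda_1$. In particular $\|v_p\|^2=\alpha_p$ is bounded. From $\int Q_\Omega v_p^p=1$ and \eqref{eq:basic inequality} we get $\int_{\rn\smallsetminus\Omega}v_p^p\le\int_\Omega v_p^p\le C$. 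Interpolating between $L^p$ and $L^{2^*}$ then bounds $\int_{\rn\smallsetminus\Omega}v_p^2$ uniformly.

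\emph{Step 2 (identification of the limit).} Up to a subsequence, $v_p\rightharpoonup v$ in $D^{1,2}(\rn)$, with convergence in $L^q_{loc}$ and a.e. By compactness on the bounded set $\Omega$ we have $\int_\Omega v_p^p\to\int_\Omega v^2$, while Fatou's lemma gives $\int_{\rn\smallsetminus\Omega}v^2\le\liminf\int_{\rn\smallsetminus\Omega}v_p^p$. Hence $\int Q_\Omega v^2\ge 1$, and
\[
\Lambda_1\le\frac{\|v\|^2}{\int Q_\Omega v^2}\le\liminf\alpha_p\le\limsup\alpha_p\le\Lambda_1 .
\]
All inequalities are therefore equalities. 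This gives $\|v_p\|\to\|v\|$, hence strong convergence, and $\int Q_\Omega v^2=1$, so $v\ge0$ is a first eigenfunction. Simplicity of the positive eigenfunction (\cite[Theorem 1.2]{CMSS}) yields $v=\phi_1$ and convergence of the whole family.

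\emph{Step 3 (the limit formula).} Set $A:=\frac12\int Q_\Omega\phi_1^2\ln(\phi_1^2)$.
For the lower bound, expand $\int Q_\Omega\phi_1^p=1-(2-p)A+o(2-p)$ using $\phi_1^p=\phi_1^2e^{(p-2)\ln\phi_1}$ and dominated convergence, which is legitimate since $\phi_1$ is bounded with exponential tails. Combined with the bound from Step 1 this gives
\[
\Big(\frac{\Lambda_1}{\alpha_p}\Big)^{\frac1{2-p}}\ge\Big(\int Q_\Omega\phi_1^p\Big)^{\frac{2}{p(2-p)}}\to e^{-A}.
\]
For the upper bound, test $\Lambda_1$ with $v_p$ to get $\Lambda_1/\alpha_p\le(\int Q_\Omega v_p^2)^{-1}$. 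Since $\int Q_\Omega v_p^p=1$, we can write
\[
\int Q_\Omega v_p^2-1=\int Q_\Omega v_p^p\,(v_p^{2-p}-1)=(2-p)\Big(\int Q_\Omega v_p^2\ln v_p+o(1)\Big).
\]
It then suffices to prove $\int Q_\Omega v_p^2\ln v_p\to\int Q_\Omega\phi_1^2\ln\phi_1$.

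The main obstacle is this last convergence, specifically controlling $v_p^{2}|\ln v_p|$ and $v_p^{p}|\ln v_p|$ where $v_p$ is small, near the free boundary and in the exterior. I would first obtain a uniform $L^\infty$ bound on $v_p$ by Moser iteration as in Lemma~\ref{Linfty:lem}; this handles the region $\{v_p\ge1\}$ and local convergence. For the small values, I would use $t^p|\ln t|\le C_\delta t^{p-\delta}$ together with uniform integrability of $v_p^{p-\delta}$ in the exterior. That integrability I would try to derive from a barrier comparison on $\rn\smallsetminus\Omega$ with $-\Delta v_p+\alpha_pv_p^{p-1}\le 0$, yielding decay uniform in $p$. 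If a uniform barrier is not available, the fallback is Vitali's theorem, using the uniform $L^2$ bound from Step 1 to control the contribution from large $|x|$.
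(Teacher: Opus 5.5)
The paper gives no proof of this statement. It is quoted from \cite[Theorem 3.4]{CMSS} and used only for $p\to2^-$ (in Theorem~\ref{thm:supp}), so there is nothing in the paper to compare your route against, and I can only judge it on its own. For $p\to2^-$ your argument is correct. Steps 1--2 are a sound $\Gamma$-convergence argument. Fatou handles the exterior term there, so the interpolation bound is not actually needed, and these two steps work for $p\to2$ from either side. Testing each variational problem with the other's minimizer in Step 3 is the right way to capture the constant.

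The step you flag as the main obstacle is easier than you expect. For the upper estimate you do not need convergence, only
\[
\liminf_{p\to2^-}\int_{\mathbb{R}^N}Q_\Omega\, v_p^p\,\frac{v_p^{2-p}-1}{2-p}\;\ge\;\int_{\mathbb{R}^N}Q_\Omega\,\phi_1^2\ln\phi_1 .
\]
On $(\mathbb{R}^N\smallsetminus\Omega)\cap\{v_p<1\}$ the integrand $-v_p^p\frac{v_p^{2-p}-1}{2-p}$ is nonnegative, so Fatou's lemma gives exactly this inequality there. On $\Omega$ the integrand is bounded by $C(1+v_p^{2^*})$, and on $(\mathbb{R}^N\smallsetminus\Omega)\cap\{v_p\ge1\}$ it is bounded by $Cv_p^{2^*}$. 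Since $v_p\to\phi_1$ in $L^{2^*}(\mathbb{R}^N)$ by Step 2, generalized dominated convergence applies on both pieces. So no $L^\infty$ bound or decay estimate is needed.

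Your barrier route also works, but only after a linearization you did not state. With $M:=\sup_p|v_p|_\infty$ and $p<2$ one has $v_p^{p-1}\ge M^{p-2}v_p$, hence $-\Delta v_p+\alpha_pM^{p-2}v_p\le 0$ in $\mathbb{R}^N\smallsetminus\overline{\Omega}$. Comparison with $Me^{-\sqrt{c}(|x|-R_0)}$ then gives exponential decay that is uniform in $p$. Your Vitali fallback based on the $L^2$ bound would not work: an $L^2$ bound neither dominates $t^p|\ln t|$ near $t=0$ nor gives tightness at infinity.

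Finally, your Step 3 is one-sided in $p$, while the quoted statement allows $p\to2^+$. There $1/(2-p)<0$, so the $\phi_1$-test gives the upper bound and the $v_p$-test the lower bound. You then need the exterior term controlled from above. Fatou points the wrong way for that, and your barrier fails, because for $p>2$ one only has $v_p^{p-1}\le M^{p-2}v_p$ and no uniform exponential decay is available. As written, your proof establishes the result for $p\to2^-$, which is all the paper uses. The two-sided claim would need an additional tail estimate that is uniform in $p$.
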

Using Theorem~\ref{q1} and \eqref{rc} we have that $\supp(w_p)=\supp(v_p)\to \mathbb R^N$. For future use, we state it in the next result.

\begin{theorem}\label{thm:supp}
Let $\Omega\subset \rn$ be an open bounded smooth set and let $w_p$ be the least energy solution of \eqref{sublinear}, then
\begin{align*}
\supp(w_p)\to \R^N\qquad \text{ as }p\to 2^-.
\end{align*}
In particular, for every $R>0$ there is $p_0=p_0(R,\Omega)\in(1,2)$ so that, if $p\in (p_0,2)$, then $B_{R}(0)\subset \supp(w_p)$.
\end{theorem}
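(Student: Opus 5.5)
Since $w_p=\alpha_p^{1/(p-2)}v_p$ by \eqref{rc} and $\alpha_p>0$, we have $\supp(w_p)=\supp(v_p)$. It therefore suffices to show that, for every $R>0$, $v_p>0$ on $\overline{B_R(0)}$ once $p$ is close to $2$. The plan is to argue by contradiction. Suppose there are $R>0$, $p_k\to2^-$ and points $x_k\in\overline{B_R(0)}$ with $v_{p_k}(x_k)=0$. Passing to a subsequence, $x_k\to x_0\in\overline{B_R(0)}$. We want to compare this with $\phi_1(x_0)>0$, using that $v_p\to\phi_1$ in $D^{1,2}(\rn)$ (Theorem~\ref{q1}) and that $\phi_1$ is positive in $\rn$.

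\textbf{Upgrading the convergence.} Convergence in $D^{1,2}$ alone does not control pointwise values, so we first obtain local uniform convergence on $\overline{B_{R+1}(0)}$. The function $v_p$ solves $-\Delta v_p=\alpha_pQ_\Omega v_p^{p-1}$. Theorem~\ref{q1} gives $\alpha_p\to\Lambda_1$; indeed the limit of $(\Lambda_1/\alpha_p)^{1/(2-p)}$ is finite and positive, so $\ln(\Lambda_1/\alpha_p)=O(2-p)\to0$. Hence $\alpha_p$ is bounded for $p$ near $2$. Since $v_p$ is bounded in $D^{1,2}$, it is bounded in $L^{2^*}_{loc}$. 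We then run the same Moser iteration and $W^{2,q}$ estimates as in Lemma~\ref{Linfty:lem} and Proposition~\ref{prop conv p 1} (\cite[Lemma B.3, Theorem B.2]{S00}). The right-hand side satisfies $|\alpha_pQ_\Omega v_p^{p-1}|\le C(1+v_p)$, uniformly for $p\in(3/2,2)$, so the constants are uniform in $p$. This yields uniform $W^{2,q}(B_{R+1})$ bounds for large $q$, hence uniform $\cC^{1,\alpha}(\overline{B_{R+1}})$ bounds. By Arzelà–Ascoli, a subsequence converges in $\cC^1(\overline{B_{R+1}})$. The limit must be $\phi_1$, because $v_p\to\phi_1$ in $D^{1,2}$ and hence in $L^2_{loc}$.

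\textbf{Conclusion.} From the uniform convergence, $0=v_{p_k}(x_k)\to\phi_1(x_0)$, which contradicts $\phi_1(x_0)>0$. Hence there is $p_0\in(1,2)$ with $v_p>0$ on $\overline{B_R(0)}$, and so $B_R(0)\subset\supp(w_p)$, for all $p\in(p_0,2)$. Since $R$ is arbitrary, $\supp(w_p)\to\rn$. The dependence $p_0=p_0(R,\Omega)$ comes only through $\phi_1$ and the constants in the estimates.

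\textbf{Main obstacle.} The delicate point is the uniformity in $p$ of the regularity constants as $p\to2^-$. This requires a uniform bound on $\alpha_p$, taken from Theorem~\ref{q1}, and a uniform local $L^\infty$ bound on $v_p$ from the Moser iteration. The latter needs the nonlinearity $v^{p-1}$ to be dominated linearly, i.e. $v^{p-1}\le 1+v$, which holds for $p\le 2$. Lemma~\ref{compact:sup:lem} cannot be used here because its $t_0$ depends on $p$, so the argument rests entirely on these local estimates.
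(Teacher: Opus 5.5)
Your proof is correct and follows the same route as the paper, which simply combines the rescaling \eqref{rc} (so that $\supp(w_p)=\supp(v_p)$) with the convergence $v_p\to\phi_1$ from Theorem~\ref{q1} and the positivity of $\phi_1$ on $\rn$. The paper states this in one line. The step you add, upgrading $D^{1,2}$ convergence to locally uniform convergence through $p$-uniform elliptic estimates (using $\alpha_p\to\Lambda_1$ and $v^{p-1}\le 1+v$), is exactly the detail the paper leaves implicit, and it is needed to pass from convergence to $\phi_1$ to positivity of $v_p$ on $\overline{B_R(0)}$.
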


\begin{remark}
In fact, using \cite[Theorem 1.3]{CMSS}, one can characterize more precisely the asymptotic behavior of $w_p$ as $p \to 2^-$. In particular,
\begin{enumerate}
    \item if $\Lambda_1=1$, then 
    \[ w_p \to \left( e^{- \frac 12 \int_{\R^N} Q_\Omega \phi_1^2 \ln(\phi_1^2)} \right) \phi_1 \in D^{1,2}(\R^N) \quad \text{ as } p \to 2. \]
\item if $\Lambda_1 >1$, then 
\[ \lim_{p \to 2^-} |w_p|_\infty= 0. \]
\item if $\Lambda_1 < 1$, then 
\[ \lim_{p \to 2^-} w_p(x) = \infty \quad \text{ for a.e. } x \in \R^N. \]
\end{enumerate}
\end{remark}

\subsection{The structure of the set of nonnegative solutions in the presence of multiple connected components}

In this section, we use some ideas from \cite{BR25} to treat the case when $\Omega$ is not connected. These results characterize the set of nonnegative solutions of \eqref{sublinear} for $p\in [1,2).$

Theorem~\ref{uniquenessOmega} shows that, if $\Omega$ has only one connected component, then \eqref{sublinear} has a unique nonnegative solution. If $\Omega$ has more than one connected component, then either uniqueness or multiplicity can happen, depending on the \emph{distance} between the components.  The following is the analogue of \cite[Theorems 1.1 and 1.2]{BR25} in our setting.

\begin{theorem}\label{thm:solset}
Let $\Omega\subset \rn$ be a bounded smooth open subset with $\ell\in \mathbb N$ connected components denoted by $\omega_i$. Thus,
\begin{align*}
    \Omega = \bigcup_{i=1}^\ell\omega_i.
\end{align*} 
Then \eqref{sublinear} has at least one and at most $2^\ell-1$ nonnegative solutions. Furthermore, $w$ is a nonnegative solution of \eqref{sublinear} if and only if there is a nonempty subset $\mathcal J\subset \{1,\ldots, \ell\}$ such that $w$ is the unique nonnegative solution of 
\begin{align}\label{J}
    -\Delta w = Q_{\Omega_{\mathcal J}} f_p(w),\qquad w \in X^p:=D^{1,2}(\rn)\cap L^{p}(\rn),
\end{align}
with $\Omega_{\mathcal J}:=\bigcup_{i\in \mathcal J}\omega_i,$ satisfying 
\begin{align}\label{Jcond}
 \operatorname{supp}(w)\cap \omega_i\neq\emptyset \quad \text{ if and only if }\quad i\in \mathcal J.
\end{align}
\end{theorem}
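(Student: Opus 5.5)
The plan is to prove the characterization first and then deduce the counting statement from it.

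\emph{Forward direction.} Let $w$ be a nontrivial nonnegative solution of \eqref{sublinear}. By Lemma~\ref{pos:lem}, on each $\overline{\omega_i}$ either $w\equiv 0$ or $w>0$, and $w>0$ on at least one component. Set
\[
\mathcal J:=\{i:\ w>0 \text{ on } \overline{\omega_i}\},
\]
which is nonempty. For $i\notin\mathcal J$ we have $w\equiv0$ on $\omega_i$, so $Q_\Omega f_p(w)=Q_{\Omega_{\mathcal J}} f_p(w)$ a.e. There are two cases:
\begin{itemize}
\item for $p>1$, $f_p(0)=0$, so this holds trivially on $\omega_i$;
\item for $p=1$, $\sgn(0)=0$, so it also holds.
\end{itemize}
Hence $w$ solves \eqref{J}. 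For \eqref{Jcond} we need $\supp(w)\cap\omega_i=\emptyset$ when $i\notin\mathcal J$, which holds because $w\equiv0$ on the open set $\omega_i$. (Interpreting $\supp(w)\cap\omega_i\neq\emptyset$ via the open positivity set, this is immediate; with the closed support one uses that $w\equiv0$ on all of $\omega_i$.) Conversely, for $i\in\mathcal J$ we have $w>0$ on $\omega_i$.

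Uniqueness of $w$ as a nonnegative solution of \eqref{J} with property \eqref{Jcond} is the delicate point. The plan is to repeat the Picone argument of Theorem~\ref{uniquenessOmega} with $\Omega_{\mathcal J}$ in place of $\Omega$. Let $u,v$ be two nonnegative solutions of \eqref{J}, both positive on every $\omega_i$, $i\in\mathcal J$ (this positivity follows from \eqref{Jcond} together with Lemma~\ref{pos:lem} applied to $\Omega_{\mathcal J}$).
\begin{itemize}
\item For $p>1$, test $u$'s equation with $v^p/(u+\eps)^{p-1}$. On $\Omega_{\mathcal J}$ the factor $(u/(u+\eps))^{p-1}\to1$. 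Outside $\Omega_{\mathcal J}$ the weight is $-1$ and the factor is at most $1$, so the error term has the favorable sign:
\[
-\int_{\rn\setminus\Omega_{\mathcal J}} v^p\Big(\tfrac{u}{u+\eps}\Big)^{p-1}\ \ge\ -\int_{\rn\setminus\Omega_{\mathcal J}} v^p .
\]
This yields $|\nabla v|_2^2\le|\nabla u|_2^{2-p}|\nabla v|_2^p$, hence $|\nabla v|_2\le|\nabla u|_2$. By symmetry equality holds.
\item For $p=1$, argue as in Theorem~\ref{uniquenessOmega}, using that $\Omega_{\mathcal J}\subset\supp u\cap\supp v$.
\end{itemize}

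The main obstacle is that, unlike in the connected case, neither $u$ nor $v$ is known to be the minimizer of $I^{\mathcal J}_p$, the energy functional of \eqref{J}. I would instead exploit hidden convexity (Lemma~\ref{lem:2}) directly. The key observation is that a nonnegative solution which is positive on $\Omega_{\mathcal J}$ satisfies $\frac{d}{dt}I^{\mathcal J}_p(\gamma(t))|_{t=0^+}\ge 0$ along $\gamma(t)=((1-t)u^p+tv^p)^{1/p}$. This follows from testing with $(v^p-u^p)/u^{p-1}$ (Picone again), and the inequality can be justified because $u>0$ on $\Omega_{\mathcal J}$, while outside $\Omega_{\mathcal J}$ the sign of the weight helps. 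Applying the same at $t=1$ and using strict convexity of $t\mapsto I^{\mathcal J}_p(\gamma(t))$ when $u\neq v$ (recall both have compact support) gives a contradiction. In particular, the unique such solution is the ground state of \eqref{J}, positive on $\overline{\Omega_{\mathcal J}}$ by Lemma~\ref{pos:ground state}.

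\emph{Converse and counting.} Conversely, if $w$ is the unique nonnegative solution of \eqref{J} satisfying \eqref{Jcond}, then $w\equiv0$ on $\omega_i$ for $i\notin\mathcal J$. Since $f_p(0)=0$, the equations with $Q_{\Omega_{\mathcal J}}$ and $Q_\Omega$ coincide, so $w$ solves \eqref{sublinear}. Thus each nonnegative solution is determined by its nonempty index set $\mathcal J$, giving at most $2^\ell-1$ of them. At least one exists, namely the ground state $w_p$ from Lemma~\ref{lem:minimizer}, with $\mathcal J=\{1,\dots,\ell\}$ by Lemma~\ref{pos:ground state}.
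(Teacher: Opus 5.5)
Your proposal is correct. Apart from the uniqueness step, it follows the paper's proof: you identify $\mathcal J$ via Lemma~\ref{pos:lem}, use $f_p(0)=0$ to pass between \eqref{sublinear} and \eqref{J}, and count nonempty subsets.

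For uniqueness the paper just says ``argue as in Theorem~\ref{uniquenessOmega}'', and the obstacle you raise does not actually occur there. The Picone step only needs the solution in the denominator, $u$, to be positive on $\Omega_{\mathcal J}$. So one may take $v$ to be the nonnegative minimizer of the functional $I_p^{\mathcal J}$ with weight $Q_{\Omega_{\mathcal J}}$, which exists by Lemma~\ref{lem:minimizer} applied to $\Omega_{\mathcal J}$. Then $|\nabla v|_2\le|\nabla u|_2$ gives $I_p^{\mathcal J}(u)\le I_p^{\mathcal J}(v)$, so $u$ is itself a nonnegative minimizer, and $u=v$ by the first part of Theorem~\ref{uniquenessOmega}.

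Your hidden-convexity argument ($h'(0^+)\ge 0\ge h'(1^-)$ together with strict convexity from Lemma~\ref{lem:2}) compares two arbitrary solutions directly. It shows intrinsically, without using the minimizer, that every nonnegative solution of \eqref{J} positive on $\Omega_{\mathcal J}$ is the global minimizer. The price is a one-sided derivative computation that your sketch leaves loose: $(v^p-u^p)/u^{p-1}$ is not an admissible test function on $\{u=0<v\}$.

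A clean way to do it is as follows.
\begin{itemize}
\item By convexity of the Dirichlet norm, $\frac12\|\gamma(t)\|^2-\frac12\|u\|^2\ge\int_{\rn}\nabla u\cdot\nabla(\gamma(t)-u)$.
\item Test the equation of $u$ with $(\gamma(t)-u)/t\in X^p$ and let $t\to0^+$. Pointwise, $t\mapsto\gamma(t)(x)$ is concave, so the difference quotients increase monotonically to $\frac1p u^{1-p}(v^p-u^p)$ where $u>0$. On $\{u=0\}$ the factor $u^{p-1}$ kills the integrand.
\item This yields $h'(0^+)\ge\frac1p\int_{\{u=0\}}v^p\ge 0$. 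The sign comes from $\{u=0\}\subset\rn\setminus\Omega_{\mathcal J}$, where $Q_{\Omega_{\mathcal J}}=-1$.
\item For $p=1$ the curve is affine, and testing with $v-u$ gives $h'(0^+)=\int_{\{u=0\}}v$ directly.
\end{itemize}
With this in place, your first Picone bullet (ending in ``by symmetry equality holds'') is no longer needed.
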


\begin{proof}
One nonnegative solution of \eqref{sublinear} is the nonnegative minimizer, see Lemma~\ref{lem:minimizer}.  

If $w$ is a nonnegative solution of \eqref{J} satisfying \eqref{Jcond}, then, since $\operatorname{supp}(w)\cap \omega_i=\emptyset$ for all $i\in \{1,\ldots,\ell\}\backslash  \mathcal J$, $w$ is (trivially) a solution of \eqref{sublinear}. On the other hand, let $w$ be a nonnegative solution of $\eqref{sublinear}$. Then \eqref{Jcond} holds for some $\mathcal J\subset \{1,\ldots, \ell\}$, see also Lemma~\ref{pos:lem}. In particular, $w$ is a nonnegative solution of \eqref{J}.  Since $\operatorname{supp}(w)\cap \omega_i\neq \emptyset$ for all $i\in \mathcal J$, the maximum principle yields that $w>0$ in $\Omega_{\mathcal{J}}$; hence, arguing as in Theorem~\ref{uniquenessOmega}, we have that $w$ is the unique nonnegative solution within this class.  

Finally, the bound $2^\ell-1$ follows easily from the fact that it is the cardinality of the power set of $\{1,\ldots,\ell\}$ without the empty set.
\end{proof}

Let $\operatorname{Isom}(\rn)$ be the group of all isometries of $\rn$ (linear isometries, translations and their compositions).

\begin{theorem}\label{thm:many}
Let $\theta_i\subset \rn$ be a bounded connected smooth open subset for $i=1,\ldots,\ell,$ $\ell\in\mathbb N$, and let $\Omega:=\bigcup_{i=1}^\ell \omega_i$, where $\omega_i:=g_i\theta_i$ for some $g_i\in \operatorname{Isom}(\rn)$. There is $d=d(p,\theta_1,\ldots,\theta_\ell)>0$ so that, if $\dist(\omega_i,\omega_j)>d$ for $i\neq j$, then \eqref{sublinear} has exactly $2^\ell-1$ different nonnegative solutions given by
$$w_{\cJ}:=\sum_{i\in\cJ} w_i,\qquad \emptyset\neq\cJ\subset \{1,\ldots, \ell\},$$
where $w_i$ the unique nonnegative solution of 
    \begin{align}\label{omegai}
    -\Delta w_i = Q_{\omega_i} f_p(w_i),\qquad w_i \in X^p:=D^{1,2}(\rn)\cap L^{p}(\rn).
\end{align}
The least energy nonnegative solution of \eqref{sublinear} is $\sum_{i=1}^\ell w_i$.
\end{theorem}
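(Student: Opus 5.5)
The plan is to use the compact support of each single-component solution together with Theorem~\ref{thm:solset}, which already gives at most $2^\ell-1$ nonnegative solutions.

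\textbf{Step 1: uniform support radius.} Let $u_i$ be the unique nonnegative solution of $-\Delta u=Q_{\theta_i}f_p(u)$ from Theorem~\ref{uniquenessOmega}; it is unique because $\theta_i$ is connected. By Lemma~\ref{compact:sup:lem}, $\supp(u_i)\subset \overline{B_{t_0(\theta_i,p)}(0)}$. Since $Q_{g\theta}(x)=Q_\theta(g^{-1}x)$ and the Laplacian is invariant under isometries, $w_i:=u_i\circ g_i^{-1}$ is the unique nonnegative solution of \eqref{omegai}. Hence $\supp(w_i)$ is contained in the closed $R_i$-neighborhood of $\omega_i$, where $R_i:=t_0(\theta_i,p)+\operatorname{diam}$-type bound. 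A cleaner choice is to take $\rho_i$ with $\supp(u_i)\subset\{x:\dist(x,\theta_i)\le\rho_i\}$; such $\rho_i$ exists because $\supp(u_i)$ is compact. This gives $\supp(w_i)\subset\{\dist(\cdot,\omega_i)\le\rho_i\}$. Set $d:=2\max_i\rho_i$, which depends only on $p$ and the $\theta_i$.

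\textbf{Step 2: $w_\cJ$ solves \eqref{sublinear}.} If $\dist(\omega_i,\omega_j)>d$, the neighborhoods $\{\dist(\cdot,\omega_i)\le\rho_i\}$ are pairwise disjoint. In particular, $\supp(w_i)\cap\omega_j=\emptyset$ for $j\neq i$, and the supports of different $w_i$ are disjoint. On $\supp(w_i)$ we therefore have $Q_\Omega=Q_{\omega_i}$ and $f_p(w_\cJ)=f_p(w_i)$. Take a test function $\varphi\in\cC^\infty_c$ and use disjointness of the supports:
\[
\irn\nabla w_\cJ\nabla\varphi=\sum_{i\in\cJ}\irn Q_{\omega_i}f_p(w_i)\varphi=\irn Q_\Omega f_p(w_\cJ)\varphi .
\]
The last equality uses $Q_{\omega_i}=Q_\Omega$ on $\supp(w_i)$ and $f_p(w_\cJ)=0$ off $\bigcup_{i\in\cJ}\supp(w_i)$. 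By Lemma~\ref{pos:ground state}, $w_i>0$ on $\overline{\omega_i}$, so $\supp(w_\cJ)\cap\omega_i\neq\emptyset$ exactly when $i\in\cJ$. Distinct sets $\cJ$ therefore give distinct functions, which yields $2^\ell-1$ solutions. Combined with the upper bound in Theorem~\ref{thm:solset}, there are exactly $2^\ell-1$.

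\textbf{Step 3: least energy.} By disjointness,
\[
I_p(w_\cJ)=\sum_{i\in\cJ}I_p^{\omega_i}(w_i)=\sum_{i\in\cJ}\tfrac{p-2}{2p}\|w_i\|^2 .
\]
Each term is strictly negative, so the full sum $\cJ=\{1,\dots,\ell\}$ has the least energy. The ground state $w_p$ is among these $2^\ell-1$ solutions by Lemma~\ref{lem:minimizer}, so it must equal $\sum_{i=1}^\ell w_i$. This is consistent with Lemma~\ref{pos:ground state}, which requires $w_p>0$ on every component.

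\textbf{Main obstacle.} The only real point is Step 1: $d$ must be independent of the isometries $g_i$. This holds because the bound in Lemma~\ref{compact:sup:lem} depends only on $(\theta_i,p)$, and the isometry maps the support of $u_i$ rigidly. The case $p=1$ works the same way, using the weak formulation with $\sgn$.
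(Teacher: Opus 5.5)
Your proof is correct and follows the paper's argument. As in the paper, you choose $d$ larger than twice an isometry-invariant bound on the size of the supports of the single-component solutions (the paper uses $\operatorname{diam}\supp(v_i)$), so the $w_i$ have pairwise disjoint supports, each $w_\cJ$ is a nonnegative solution whose index set is exactly $\cJ$, and Theorem~\ref{thm:solset} caps the count at $2^\ell-1$. Your direct check of the weak formulation and the identity $I_p(w_\cJ)=\sum_{i\in\cJ}\frac{p-2}{2p}\|w_i\|^2$ make explicit steps the paper leaves implicit; there the least-energy claim rests on Lemma~\ref{pos:ground state}.
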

\begin{proof}
This is a consequence of Theorem~\ref{thm:solset}. Indeed, for every $i\in \{1,\ldots, \ell\}$, by Theorem~\ref{uniquenessOmega}, there is a unique nonnegative solution $v_i$ of 
    \begin{align}\label{thetai}
    -\Delta v_i = Q_{\theta_i} f_p(v_i),\qquad v_i \in X^p:=D^{1,2}(\rn)\cap L^{p}(\rn).
\end{align}

 Recall that the diameter of a set $A\subset \rn$ is given by 
\begin{align*}
    \operatorname{diam}(A):=\sup\{|x-y|\::\: x,y\in A\}.
\end{align*}
Let $R_i$ be the diameter of $\supp(v_i)$, which is finite by Lemma~\ref{compact:sup:lem}. Clearly, the value of $R_i$ does not change if we consider \eqref{thetai} with $g\theta_i$ instead of $\theta_i$ for any $g\in \operatorname{Isom}(\rn)$. Let $R:=\max_{i} R_i$ and let $d>2R$, then $w_i:=v_i\circ g_i^{-1}$ is a solution of \eqref{J} such that
    \eqref{Jcond} holds for $\mathcal J=\{i\}$ for every $i\in \{1,\ldots, \ell\}$.  Once this is established, then for every nonnempty $\mathcal J$ in the power set of $\{1,\ldots,\ell\}$ we have that $\sum_{i\in \mathcal J}w_i$ is the unique nonnegative solution of \eqref{J} satisfying \eqref{Jcond}. As a result, we obtain $2^\ell-1$ solutions.
 \end{proof}

\begin{theorem}\label{thm:one}
Let $\Omega\subset \rn$ be a smooth bounded open subset of $\rn$. There is $p_0=p_0(\Omega)\in(1,2)$ so that, if $p\in (p_0,2)$, then \eqref{sublinear} has a unique nonnegative solution. 
\end{theorem}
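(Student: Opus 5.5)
Since $\Omega$ is bounded and has finitely many components $\omega_1,\dots,\omega_\ell$, I combine Theorem~\ref{thm:solset} with Theorem~\ref{thm:supp}. By Theorem~\ref{thm:solset}, every nonnegative solution $w$ of \eqref{sublinear} is the unique nonnegative solution of \eqref{J} for some nonempty $\mathcal J\subset\{1,\dots,\ell\}$ satisfying \eqref{Jcond}. Moreover, that solution is the nonnegative ground state (least energy minimizer) $w_{p,\mathcal J}$ of the functional associated with $Q_{\Omega_{\mathcal J}}$, by the uniqueness part of Theorem~\ref{thm:solset}. Therefore it suffices to show that, for $p$ close to $2$, every proper nonempty subset $\mathcal J\subsetneq\{1,\dots,\ell\}$ violates \eqref{Jcond}. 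Concretely, I need $\supp(w_{p,\mathcal J})\cap\omega_i\neq\emptyset$ for some $i\notin\mathcal J$. Once this holds, the only admissible $\mathcal J$ is the full index set, and the only nonnegative solution is $w_p$.

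To get this, fix $R>0$ with $\overline\Omega\subset B_R(0)$. For each of the finitely many nonempty $\mathcal J$, apply Theorem~\ref{thm:supp} to the bounded smooth open set $\Omega_{\mathcal J}$. This yields $p_{\mathcal J}=p_0(R,\Omega_{\mathcal J})\in(1,2)$ such that $B_R(0)\subset\supp(w_{p,\mathcal J})$ for $p\in(p_{\mathcal J},2)$. Set $p_0(\Omega):=\max_{\mathcal J}p_{\mathcal J}<2$, which is a maximum over $2^\ell-1$ values. For $p\in(p_0,2)$ the support of each $w_{p,\mathcal J}$ contains $\overline\Omega$, so it meets every $\omega_i$. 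This is incompatible with \eqref{Jcond} unless $\mathcal J=\{1,\dots,\ell\}$, and the claim follows.

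The only point requiring care is that Theorem~\ref{thm:supp} applies to each $\Omega_{\mathcal J}$. It was stated for an arbitrary bounded smooth open set, not necessarily connected. It relies on Theorem~\ref{q1} from \cite{CMSS}, which requires $\Lambda_1(\Omega_{\mathcal J})>0$ and a positive eigenfunction $\phi_1$ for $Q_{\Omega_{\mathcal J}}$. I would check that these inputs hold for nonconnected sets. The positivity of $\phi_1$ on all of $\R^N$ is what drives the support to exhaust $\R^N$, since $v_p\to\phi_1$ and the rescaling \eqref{rc} preserves supports. This is the step I expect to be the main obstacle. Strong $D^{1,2}$ convergence alone does not directly give $B_R\subset\supp(v_p)$, so one would upgrade the convergence to local uniform convergence via the uniform $\cC^{1,\alpha}_{loc}$ estimates of Lemma~\ref{Linfty:lem}. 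Then $\min_{\overline{B_R}}\phi_1>0$ forces $v_p>0$ on $B_R$ for $p$ near $2$. I take this as already established by Theorem~\ref{thm:supp}.
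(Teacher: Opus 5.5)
Your proposal is correct and follows the paper's argument, which simply cites Theorems~\ref{thm:solset} and~\ref{thm:supp}; applying Theorem~\ref{thm:supp} to each of the finitely many $\Omega_{\mathcal J}$ and taking the largest threshold is the right way to make that precise. One small correction: the claim that the $\mathcal J$-solution equals the ground state $w_{p,\mathcal J}$ does not follow from the uniqueness statement of Theorem~\ref{thm:solset} as written, which is uniqueness only among solutions satisfying \eqref{Jcond} (and $w_{p,\mathcal J}$ itself need not satisfy \eqref{Jcond}); it follows instead from the Picone comparison in that theorem's proof, which, as in Theorem~\ref{uniquenessOmega}, shows that any nonnegative solution of \eqref{J} that is positive on $\Omega_{\mathcal J}$ minimizes the $Q_{\Omega_{\mathcal J}}$-functional and hence equals $w_{p,\mathcal J}$.
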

\begin{proof}
This is a direct consequence of Theorems~\ref{thm:solset} and~\ref{thm:supp}. 
\end{proof}

\section{Nodal solutions}\label{sec:nod}

In this section we explore the existence and qualitative properties of nodal solutions of \eqref{sublinear}. First we show the existence of a mountain pass solution and then we obtain a least energy nodal solution.  Following \cite{BFMST18}, we show that these two solutions are different in some dumbbell domains.

\subsection{A mountain-pass solution}

For $p\in[1,2)$, let $w$ be the unique nonnegative minimizer of $I_p$, set
\[ \Gamma:=\{ \sigma \in \cC^0([0, 1], X^p):\, \sigma(0)=-w, \, \sigma(1)=w \} \]
and define
\[ c_{mp}:=\inf_{\sigma \in \Gamma} \sup_{t \in [0,1 ]} I_p(\sigma(t)). \]

\begin{lemma}\label{c mp <0}
$c_{mp} <0$. 
\end{lemma}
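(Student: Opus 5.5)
To prove that $c_{mp}<0$, I will build one explicit path $\sigma\in\Gamma$ from $-w$ to $w$ along which $I_p$ stays strictly negative. Since $[0,1]$ is compact and $t\mapsto I_p(\sigma(t))$ is continuous, this forces $\sup_{t}I_p(\sigma(t))<0$, and hence $c_{mp}<0$.

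The naive path $t\mapsto (2t-1)w$ does not work, because it passes through $0$ and $I_p(0)=0$. I will therefore use the extra freedom coming from the sign of the test function and go around the origin.

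Fix a connected component $\omega$ of $\Omega$. By Lemma~\ref{pos:ground state}, $w>0$ on $\overline\omega$. Pick $\psi\in\cC_c^\infty(\omega)$ with $\psi\ge0$ and $\psi\not\equiv0$. The path is the concatenation of three pieces:
\begin{enumerate}
\item first, $-w$ is deformed to $-\lambda\psi$;
\item second, $-\lambda\psi$ is rotated to $\lambda\psi$;
\item third, $\lambda\psi$ is deformed to $w$.
\end{enumerate}
For the middle piece I will not pass through zero along the segment. Instead I take a second function $\eta\in\cC_c^\infty(\omega)$, $\eta\ge0$, with support disjoint from $\supp\psi$, and go $-\lambda\psi\to -\lambda\psi+\lambda\eta\to \lambda\eta\to\lambda\eta+\lambda\psi\to\lambda\psi$ through straight segments. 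On each segment the function is $a\psi+b\eta$ with $|a|+|b|\ge$ some fixed positive multiple of $\lambda$ and $|a|,|b|\le\lambda$. Because the supports are disjoint and contained in $\Omega$,
\[
I_p(a\psi+b\eta)=\tfrac{a^2}{2}\|\psi\|^2+\tfrac{b^2}{2}\|\eta\|^2-\tfrac{|a|^p}{p}|\psi|_p^p-\tfrac{|b|^p}{p}|\eta|_p^p,
\]
which is negative for small $\lambda$ since $p<2$ and $\max\{|a|,|b|\}\gtrsim\lambda$.

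For the outer pieces I use the convex-type curve $\gamma$ of Lemma~\ref{lem:2}, namely $t\mapsto((1-t)w^p+t(\lambda\psi)^p)^{1/p}$ between the nonnegative functions $w$ and $\lambda\psi$, and its negative for the first piece. By Lemma~\ref{lem:2} (or plain convexity when $|\nabla w|=|\nabla(\lambda\psi)|$), $t\mapsto I_p(\gamma(t))$ is convex. It is therefore bounded by $\max\{I_p(w),I_p(\lambda\psi)\}=\max\{\mu_p,I_p(\lambda\psi)\}<0$, using Lemma~\ref{lem:minimizer}. Evenness $I_p(-u)=I_p(u)$ handles the first piece.

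The main technical point is continuity of $\gamma$ into $X^p$. Convexity itself is supplied by Lemma~\ref{lem:2}, which rests on the convexity argument of \cite{BFMST18}. Continuity needs a check: both functions are bounded and compactly supported by Lemma~\ref{compact:sup:lem}, and $\gamma(t)\ge((1-t)w^p)^{1/p}$ stays away from degeneracy where $w>0$. The worry is the gradient of $\gamma$ near the points where $w=0$ and $\psi=0$ simultaneously. The pointwise bound on $|\nabla\gamma(t)|$ from Lemma~\ref{lem:2}, combined with dominated convergence, should give continuity in $D^{1,2}$. Continuity in $L^p$ is immediate.

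If this continuity becomes awkward, the fallback is to replace the curve by straight segments $(1-s)w+s\lambda\psi$. For these, $I_p$ is negative because both summands are nonnegative and the $\Omega$-part dominates for small $\lambda$. This step is where I expect the real work to lie.
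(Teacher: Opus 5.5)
Your proof is correct. The outer pieces are the same as in the paper, but the middle piece takes a genuinely different route.

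\textbf{Outer pieces.} Like the paper, you join $w$ to a small positive function supported in $\Omega$ by the curve $((1-t)u^p+tv^p)^{1/p}$ of Lemma~\ref{lem:2}, which is a straight segment when $p=1$. Along this curve $I_p$ stays below the chord. Evenness of $I_p$ handles $-w$.

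\textbf{Middle piece.} The paper takes as intermediate function the Dirichlet ground state $w_0$ of a component when $p>1$, or $\varepsilon\varphi_1$ when $p=1$. It then crosses from the negative to the positive side with paths imported from \cite{BMPTW}:
\begin{enumerate}
\item for $p>1$, the negative-energy path joining $\pm w_0$ inside $D^{1,2}_0(\Omega_0)$;
\item for $p=1$, a path on the $L^2$-unit sphere with Rayleigh quotient at most $\lambda_2(\Omega_0)(1+\delta/2)$, scaled by a small $\varepsilon$.
\end{enumerate}
Your detour through the span of two bumps with disjoint supports in $\Omega$ makes the energy split exactly as $g_\psi(a)+g_\eta(b)$, with $\max\{|a|,|b|\}=\lambda$ on every segment. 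Negativity for small $\lambda$ is then immediate, for all $p\in[1,2)$ at once, with no external input and no separate treatment of $p=1$. The paper's route gives the finer information that $\pm w_0$ themselves are joined at negative level inside $D^{1,2}_0(\Omega_0)$, which this lemma does not need.

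\textbf{Continuity.} Your concern about continuity of the curve in $X^p$ is legitimate; the paper uses the same curve without comment. Your fix works:
\begin{enumerate}
\item the bound $|\nabla\gamma(t)|\le|\nabla w|+\lambda|\nabla\psi|$ gives an $L^2$ majorant;
\item $\nabla\gamma(s)\to\nabla\gamma(t)$ a.e. follows from the explicit formula on $\{w>0\}\cup\{\psi>0\}$, together with $\nabla\gamma(s)=0$ a.e. where both functions vanish.
\end{enumerate}
So the fallback is not needed. That is fortunate, because for $p>1$ its one-line justification is not immediate. Since $t\mapsto t^p$ is convex, $\int_\Omega((1-s)w+s\lambda\psi)^p$ is not bounded below by the chord, so a genuine estimate would be required there.
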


\begin{proof}
Let $\o_0$ be a connected component of $\o$ and, as usual, let $D^{1,2}_0(\o_0)$ be the closure of $\cC^\infty_c(\o_0)$ in $D^{1,2}(\rn)$. Since $\o_0$ is bounded, $D^{1,2}_0(\o_0)\subset X^p$. The functional $I_{p,0}:D^{1,2}_0(\o_0)\to\r$ is given by 
\[ I_{p,0}(u):=\frac 12 \int_{\Omega_0} |\nabla u|^2 - \frac 1 p \int_{\Omega_0} |u|^p.  \]

\underline{\emph{Case $p>1$.}}
As shown in \cite[Lemma 4.5]{BMPTW}, $I_{p,0}$ has a unique positive minimizer $w_0$ with $I_{p,0}(w_0)<0$ and  there exists a continuous path $\sigma: [0, 1] \to D^{1,2}_0(\Omega_0)$ such that $I_{p,0}(\sigma(t)) <0$ for all $t \in [0, 1]$, $\sigma(0)=-w_0$ and $\sigma(1)=w_0$. 

Let $\gamma: [0, 1] \to X^p$ be the path from $w_0$ to $w$ given by
\[ \gamma(t)=((1-t)w_0^p + tw^p)^{1/p}, \]
and $\eta:[0,1]\to X^p$ be the path from $-w$ to $-w_0$ defined as $\eta(t):=-\gamma(1-t)$. Then, by Lemma~\ref{lem:2},
\[ I_p(\gamma(t)) \le (1-t)I_p(w_0) + tI_p(w) =(1-t)I_{p,0}(w_0) + tI_p(w) <0  \]
and $I_p(\eta(t))=I_p(\gamma(1-t))<0$ for every $t\in[0,1]$. 
Composing the paths $\eta$, $\sigma$ and $\gamma$ we obtain a continuous path connecting $-w$ to $w$ and such that $I_p$ is strictly negative along the path, whence $c_{mp} <0$.

\underline{\emph{Case $p=1$.}}
    We need to check that the arguments in \cite{BMPTW} can be adapted to this case. Let $\varphi_1$ be the positive, $L^2$–normalized eigenfunction associated to $\lambda_1(\Omega_0)$, the first Dirichlet eigenvalue of the Laplace operator in $\Omega_0$, namely
    \[ \int_{\Omega_0}|\nabla \varphi_1|^2 =\lambda_1(\Omega_0) \int_{\Omega_0} |\varphi_1|^2 = \lambda_1(\Omega_0). \]
    Then there exists $\varepsilon >0$ small enough such that 
    \[ I_{1,0}(\varepsilon \varphi_1) = \frac{\varepsilon^2}{2} \int_{\Omega_0} |\nabla \varphi_1|^2 - \varepsilon \int_{\Omega_0} \varphi_1 <0. \]
    We define 
    \[ \zeta(t):=(1-t) \varepsilon \varphi_1 + t w. \]
    Then
    \begin{align*} I_1(\zeta(t)) &\le \frac{1-t}{2} \varepsilon^2\int_{\Omega_0} |\nabla \varphi_1|^2 + \frac t 2 \int_{\R^N} |\nabla w|^2 - (1-t) \varepsilon \int_{\Omega_0} \varphi_1 - t\int_{\R^N} Q_\Omega |w| \\
    &= (1-t) I_{1, 0} (\varepsilon \varphi_1)  + t I_1(w) <0. \end{align*}

    We now recall from \cite{BMPTW} that there exists a continuous path $c(t): [0, 1] \to D^{1,2}_0(\Omega_0)$ connecting $-\varphi_1$ to $\varphi_1$, and such that
    \[ \int_{\Omega_0} |\nabla c(t)|^2 \le \lambda_2(\Omega_0)(1+\delta/2), \quad \int_{\Omega_0} |c(t)|^2=1,  \]
    for some positive $\delta$, where $\lambda_2(\Omega_0)$ is the second Dirichlet eigenvalue of the Laplace operator on $\Omega_0$.
    Observe that, up to choosing a smaller $\varepsilon>0$, one has
    \begin{align*} I_1 (\varepsilon c(t)) = I_{1, 0}(\varepsilon c(t)) &= \frac{\varepsilon^2}{2} \int_{\Omega_0} |\nabla c(t)|^2 - \varepsilon \int_{\Omega_0} |c(t)| \\
    & \le \frac{\varepsilon^2}{2} \int_{\Omega_0} |\nabla c(t)|^2 - \frac{1+\delta}2\lambda_2(\Omega_0) \varepsilon^2   \le -\frac \delta 4 \varepsilon^2 \lambda_2(\Omega_0) <0. \end{align*}

Thus, combining the paths $-\zeta(1-t)$, $c$ and $\zeta$, we get a continuous path connecting $-w$ and $w$ such that $I_1$ is strictly negative along the path, which yields the conclusion.   
\end{proof}

We now want to prove that $c_{mp}$ is a critical value of $I_p$ in a suitable sense. Notice that if $p=1$ the functional $I_p$ is not of class $\cC^1$, thus we use a suitable adaptation of the Mountain Pass Theorem given by \cite[Theorem 3]{Shi}, see also \cite{chang}. 
The Palais-Smale condition for a non-smooth function is stated in \cite[Definition 2]{chang}. It coincides with the usual one \cite{w} if the function is $\cC^1$. For $I_1$ it reads as follows.

\begin{definition}\label{def ps p1}
A sequence $u_n \in X^1$ such that $I_1(u_n) \to c$ and
 \begin{equation}\label{ps cond} \min_{g \in \partial I_1 (u_n)} \norm{g}_{(X^1)^*} \to  0
 \end{equation}
is called a $(PS)_c$-sequence for $I_1$, and $I_1$ is said to satisfy the $(PS)_c$-condition if every $(PS)_c$-sequence for $I_1$ admits a convergent subsequence. 
\end{definition}

\begin{lemma} \label{lem:ps} 
$I_p$ satisfies the $(PS)_c$-condition for every $c<0$.
\end{lemma}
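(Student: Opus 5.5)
Let $(u_n)\subset X^p$ be a $(PS)_c$-sequence with $c<0$. The plan has three steps: show the sequence is bounded in $X^p$, extract a weak limit that does not lose mass at infinity, and upgrade to strong convergence in $X^p$ by testing the derivative against $u_n-u$.

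\emph{Step 1: boundedness.} For $p>1$, the inequality \eqref{eq:bounded} gives $\frac12\|u_n\|^2-C\|u_n\|^p\le I_p(u_n)\le c+1$, so $\|u_n\|$ is bounded.

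To control the $L^p$ norm I would use $I_p'(u_n)u_n=o(1)\|u_n\|_{X^p}$. This yields
\[\|u_n\|^2+\int_{\rn\smallsetminus\o}|u_n|^p=\io|u_n|^p+o(1)\|u_n\|_{X^p}.\]
Since $\io|u_n|^p\le C\|u_n\|^p$ by \eqref{eq:basic inequality}, we get $|u_n|_p^p\le C+o(1)(1+|u_n|_p)$. Because $p\ge1$, this forces $|u_n|_p$ to stay bounded.

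For $p=1$, I pick $g_n\in\partial I_1(u_n)$ of minimal norm. By \eqref{gradient I 1}, $g_n(\vp)=\int\nabla u_n\nabla\vp-\int Q_\o s_n\vp$ with $s_n\in S(u_n)$. Testing with $\vp=u_n$ and using $s_nu_n=|u_n|$ gives exactly the same identity, so the same argument applies.

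\emph{Step 2: weak limit.} Pass to a subsequence with $u_n\rh u$ in $D^{1,2}(\rn)$, $u_n\to u$ in $L^q_{loc}$ and a.e. By Fatou, $u\in X^p$. Then $\io|u_n|^p\to\io|u|^p$ by the compact embedding, since $\o$ is bounded.

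\emph{Step 3: strong convergence.} Test with $u_n-u$ and write $I_p'(u_n)(u_n-u)=o(1)$, which uses the $X^p$-boundedness from Step 1. This gives
\[\|u_n\|^2-\langle u_n,u\rangle+\int_{\rn\smallsetminus\o}f_p(u_n)(u_n-u)=\io f_p(u_n)(u_n-u)+o(1).\]
The right-hand side tends to $0$: $f_p(u_n)$ is bounded in $L^{p'}(\o)$ (in $L^\infty$ if $p=1$), and $u_n\to u$ in $L^p(\o)$.

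For the exterior term, convexity of $t\mapsto\frac1p|t|^p$ gives $f_p(u_n)(u_n-u)\ge\frac1p(|u_n|^p-|u|^p)$; for $p=1$ the same holds with $s_n$ in place of $f_p(u_n)$. Hence
\[\limsup_n\Big(\|u_n\|^2-\|u\|^2+\tfrac1p\int_{\rn\smallsetminus\o}(|u_n|^p-|u|^p)\Big)\le0,\]
after using $\langle u_n,u\rangle\to\|u\|^2$. By weak lower semicontinuity and Fatou, each of the two pieces has $\liminf\ge0$. Therefore both converge: $\|u_n\|\to\|u\|$, so $u_n\to u$ in $D^{1,2}$, and $\int_{\rn\smallsetminus\o}|u_n|^p\to\int_{\rn\smallsetminus\o}|u|^p$.

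Combined with a.e. convergence, the Brezis--Lieb lemma (or the Radon--Riesz property of $L^p$ for $p>1$; Brezis--Lieb alone for $p=1$) gives $u_n\to u$ in $L^p(\rn\smallsetminus\o)$. Together with Step 2, this yields $u_n\to u$ in $X^p$.

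I expect the main obstacle to be the loss of compactness at infinity in the $L^p$ part. The convexity trick above is designed to handle it. For $p=1$, the only extra care needed is the choice of $s_n$ and the fact that the minimal-norm element of $\partial I_1(u_n)$ is attained. Note that the hypothesis $c<0$ is not strictly needed in this argument, but it is harmless.
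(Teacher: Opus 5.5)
Your argument is correct, but it takes a different route from the paper's.

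\textbf{The paper's route.} The paper gets boundedness from Lemma~\ref{bounded Xp}$(ii)$, which is the only place where $c<0$ enters. It then passes to the limit in $I_p'(u_k)\varphi$, $\varphi\in\cC^\infty_c(\rn)$, to show that the weak limit $u$ is itself a solution. Finally, it plays the identity $\|u\|^2=\irn Q_\o|u|^p$ for $u$ against $\|u_k\|^2-\irn Q_\o|u_k|^p\to0$, weak lower semicontinuity and Fatou, so that all inequalities become equalities. Brezis--Lieb then finishes.

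\textbf{Your route.} You instead test with $u_n-u$. You handle the exterior term by the pointwise subgradient inequality $f_p(u_n)(u_n-u)\geq\frac1p(|u_n|^p-|u|^p)$, with $s_n$ in place of $f_p(u_n)$ when $p=1$. This is an $(S_+)$-type argument.

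\textbf{What your route buys.} First, you never need to know that the weak limit solves the equation. So for $p=1$ you do not have to extract a limit $s\in S(u)$ of the $s_n$. The paper asserts $s_n\to s$ a.e., which can fail on $\{u=0\}$ and really requires a weak-$*$ limit in $L^\infty$. Second, your boundedness step works at every level, so you in fact prove the $(PS)_c$-condition for all $c\in\r$.

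A small simplification: the $L^p$ bound in your Step 1 follows from the energy bound alone. Indeed, $\frac1p\int_{\rn\smallsetminus\o}|u_n|^p\leq I_p(u_n)+\frac1p\io|u_n|^p$ is bounded once $\|u_n\|$ is, so testing with $u_n$ is not needed.

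\textbf{What the paper's route buys.} It is natural in context, because it reuses the Nehari-type identity for solutions that appears throughout the paper.
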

\begin{proof}
\underline{\emph{Case $p>1$.}} 
To show that $I_p$ satisfies the $(PS)_c$-condition for every $c<0$, we need to show that every sequence $(u_k)$ in $X^p$ such that 
$$I_p(u_k) \to c<0\qquad\text{and}\qquad I_p'(u_k)\to 0\text{ \ in \ }(X^p)'$$
contains a convergent subsequence.  Let $(u_k)$ be as above. We may assume that $I_p(u_k)<0$ for all $k$. Then, by Lemma~\ref{bounded Xp}$(ii)$, $(u_k)$ is bounded in $X^p$. Hence, after passing to a subsequence, there exists $u \in D^{1,2}(\R^N) $ such that $u_k \rightharpoonup u$ weakly in $D^{1, 2}(\R^N)$, $u_k \to u$ in $L^p_{loc}(\R^N)$ and a.e. in $\rn$. Notice that $u \in X^p$ since
by Fatou's Lemma 
\[ \int_{\R^N} |u|^p \le \lim_{k \to \infty} \int_{\R^N} |u_k|^p \le \lim_{k \to \infty} \norm{u_k}_{X^p}^p \le C. \]
It follows that, for any $\varphi \in \mathcal{C}_c^\infty(\R^N)$,
\begin{align*} 0 &= \lim_{k\to\infty} I_p'(u_k) \varphi = \lim_{k\to\infty} \left( \int_{\R^N} \nabla u_k\cdot \nabla \varphi - \int_{\R^N} Q_\Omega |u_k|^{p-2} u_k \varphi  \right) \\
& = \int_{\R^N} \nabla u\cdot \nabla \varphi - \int_{\R^N} Q_\Omega |u|^{p-2} u \varphi. 
\end{align*}
This shows that $u$ is a solution to \eqref{sublinear}.
Therefore, using Fatou's lemma,
\begin{align}\label{conv in X}
\nonumber \norm{u}^2&\le \lim_{k\to\infty} \norm{u_k}^2 = \lim_{k\to\infty} \irn Q_\Omega |u_k|^p = \lim_{k\to\infty} \io |u_k|^p-\lim_{k\to\infty} \int_{\rn\smallsetminus\o}|u_k|^p \\
&\le \io |u|^p- \int_{\rn\smallsetminus\o}|u|^p=\int_{\R^N} Q_\Omega |u|^p = \norm{u}^2.
\end{align}
As a consequence, every inequality in \eqref{conv in X} is an equality, which proves that 
$u_n \to u$ in $D^{1, 2}(\R^N)$, and 
\[ \lim_{k \to \infty} \int_{\R^N} |u_k|^p = \int_{\R^N} |u|^p. \]
Since $u_k \to u$ a.e. in $\R^N$, we can now apply Brezis--Lieb Lemma \cite[Lemma 1.32]{w} to conclude that $u_k \to u$ in $L^p(\R^N)$, thus $u_k \to u$ in $X^p$.

\underline{\emph{Case $p=1$.}} To show that $I_1$ satisfies the $(PS)_c$-condition for every $c < 0$ we use Definition~\ref{def ps p1}. If \eqref{ps cond} holds, then there exists $s_n \in S(u_n)$ such that 
\begin{equation}\label{ps cons} \lim_{n \to \infty} \left( \int_{\R^N} \nabla u_n \nabla \varphi - \int_{\R^N} Q_\Omega s_n \varphi \right) =0 \quad \text{for any $\varphi \in X^1$ such that $\norm{\varphi}_{X^1}=1$.} \end{equation}
By Lemma~\ref{bounded Xp}, $u_n$ is bounded in $X^1$, thus up to a subsequence $u_n \rightharpoonup u$ in $D^{1, 2}(\R^N)$ and $u_n \to u$ a.e. in $\rn$. It follows that, up to a subsequence, there exists $s \in S(u)$ such that $s_n \to s$ a.e. in $\rn$. 
Therefore,
\[ \int_{\R^N} \nabla u \nabla \varphi = \int_{\R^N} Q_\Omega s \varphi \quad \text{for any $\varphi \in X^1$  such that $\norm{\varphi}_{X^1}=1$.} \]
Moreover, since $u_n$ is bounded in $X^1$, taking $\varphi = u_n / \norm{u_n}_{X^1}$ in \eqref{ps cons}, we get  
\[ \lim_{n \to \infty} \left( \int_{\R^N} |\nabla u_n|^2 - \int_{\R^N} Q_\Omega s_n u_n \right)= 0.\]
Hence, recalling that $s_n \in S(u_n)$ and $s \in S(u)$, and applying Fatou's Lemma, we obtain  
\begin{align*} \norm{u}^2 &\le \lim_{n \to \infty} \norm{u_n}^2 = \lim_{n \to \infty} \int_{\R^N} Q_\Omega s_n u_n = \lim_{n \to \infty} \int_{\R^N} Q_\Omega |u_n| \\
& \le \int_{\R^N} Q_\Omega |u|  = \int_{\R^N} Q_\Omega su= \norm{u}^2. 
\end{align*}
As in the case $p>1$, the Brezis--Lieb Lemma \cite[Lemma 1.32]{w} yields $u_n \to u$ strongly in $X^1$, concluding the proof.
\end{proof}

\begin{theorem}\label{c mp attained}
Assume $\Omega$ is connected. 
Let $p\in [1,2)$, then $\mu_p < c_{mp} <0$ and there exists $u \in X^p$ such that $I_p(u)=c_{mp}$. Moreover, any critical point achieving $c_{mp}$ is a sign-changing solution to \eqref{sublinear}. 
\end{theorem}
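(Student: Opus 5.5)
We will obtain a critical point at the level $c_{mp}$ from the Mountain Pass Theorem, use the uniqueness results of Section~\ref{sec:gs} to show that this level lies strictly above $\mu_p$, and then argue that a critical point at this level cannot have a sign.

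\textbf{Step 1: $c_{mp}<0$ and $c_{mp}\ge\mu_p$.} The bound $c_{mp}<0$ is Lemma~\ref{c mp <0}. Since every path lies in $X^p$, we also have $c_{mp}\geq\mu_p$.

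\textbf{Step 2: $\mu_p<c_{mp}$, via a mountain-pass geometry.} Since $\Omega$ is connected, Lemma~\ref{lem:sign of minimizer} and Theorem~\ref{uniquenessOmega} give that the only minimizers of $I_p$ are $\pm w$. We show there is $\rho>0$ with $\|w\|_{X^p}>2\rho$ and
\[
\inf\{I_p(u): \|u-w\|_{X^p}=\rho\}>\mu_p .
\]
Suppose not. Then there is a sequence $u_k$ with $\|u_k-w\|_{X^p}=\rho$ and $I_p(u_k)\to\mu_p$. We want this to be a Palais--Smale sequence. For $p>1$ we get one by applying Ekeland's variational principle to $I_p$ restricted to the closed annulus $\rho/2\le\|u-w\|_{X^p}\le 2\rho$, which yields an almost-minimizing sequence $\tilde u_k$ with $\|\tilde u_k-u_k\|_{X^p}\to0$ and $I_p'(\tilde u_k)\to0$. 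For $p=1$ we use Clarke's version of Ekeland together with the generalized gradient, as in \cite{chang}. By Lemma~\ref{lem:ps} (the level is $\mu_p<0$), a subsequence converges to a minimizer $v$ with $\|v-w\|_{X^p}\in[\rho/2,2\rho]$. The only possibility is $v=-w$, which we exclude by taking $\rho<\|w\|_{X^p}/2$. This contradiction gives the claimed inequality.

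Every path in $\Gamma$ runs from $-w$ to $w$, so it crosses the sphere $\|u-w\|_{X^p}=\rho$. Hence $c_{mp}\ge\inf_{\|u-w\|_{X^p}=\rho}I_p>\mu_p$.

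\textbf{Step 3: existence of a critical point at level $c_{mp}$.} We have strict separation $\max\{I_p(-w),I_p(w)\}=\mu_p<c_{mp}$, and $(PS)_{c_{mp}}$ holds by Lemma~\ref{lem:ps} since $c_{mp}<0$. For $p>1$ the classical Mountain Pass Theorem then gives a critical point $u$ with $I_p(u)=c_{mp}$. For $p=1$ we apply the nonsmooth version \cite[Theorem 3]{Shi} to get $0\in\partial I_1(u)$ with $I_1(u)=c_{mp}$, and Lemma~\ref{opt sol} shows that $u$ solves \eqref{sublinear}.

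\textbf{Step 4: any critical point at level $c_{mp}$ changes sign.} Let $u$ be such a critical point. Since $c_{mp}<0=I_p(0)$, we have $u\neq0$. If $u\ge0$, then $u$ is a nonnegative solution of \eqref{sublinear}, so by Theorem~\ref{uniquenessOmega} (here we use that $\Omega$ is connected) $u=w$. This gives $c_{mp}=\mu_p$, contradicting Step 2. The case $u\le0$ is the same after applying this argument to $-u$, using that $I_p$ is even.

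\textbf{Main obstacle.} The delicate point is the strict inequality $\mu_p<c_{mp}$ in Step 2, and in particular producing a genuine Palais--Smale sequence on the sphere in the nonsmooth case $p=1$. If that proves awkward, a fallback is a deformation-free argument that avoids Palais--Smale sequences on the sphere. Weak limits of $I_p$-minimizing sequences converge strongly, by the same Fatou/Brezis--Lieb chain used in Lemma~\ref{lem:ps}. Hence any sequence with $I_p(u_k)\to\mu_p$ converges in $X^p$, up to a subsequence, to $\pm w$. This already rules out $u_k$ staying on the sphere of radius $\rho<\|w\|_{X^p}$ around $w$, and it works uniformly for all $p\in[1,2)$.
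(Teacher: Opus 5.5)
Your proposal is correct and follows the paper's proof. For $p>1$ the paper also uses Ekeland together with Lemma~\ref{lem:ps} to show $\inf\{I_p(u):\|u-w\|_{X^p}=\varepsilon\}>\mu_p$, then applies the Mountain Pass Theorem and Theorem~\ref{uniquenessOmega} for the sign change. For $p=1$ it uses exactly your fallback (minimizing sequences converge strongly to $\pm w$ via Fatou and Brezis--Lieb) before invoking \cite[Theorem 3]{Shi}; as you note, that direct compactness argument would also cover $p>1$ without Ekeland.
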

\begin{proof}
\underline{\emph{Case $p>1$.}}
    Fix $\varepsilon \in (0, 2 \norm{w}_{X^p})$. We first show that 
    \begin{equation} \label{eq:mp}
        \inf \{ I_p(u): \, \|u-w \|_{X^p} = \varepsilon\} > \mu_p.
    \end{equation}
Arguing by contradiction, assume there exist $u_n \in X^p$ such that 
\[ I_p(u_n) \to \mu_p \quad \text{ and }\quad  \norm{u_n - w}_{X^p}=\varepsilon. \]  
Then, Ekeland's variational principle \cite[Theorem 2.4]{w}, yields $v_n\in X^p$ such that
$$I_p(v_n)\to\mu_p,\qquad I_p'(v_n)\to 0\text{ \ in \ }(X^p)'\qquad\text{and}\qquad\|u_n-v_n\|_{X^p}\to 0.$$
By Lemma~\ref{lem:ps}, after passing to a subsequence, $v_n\to v$ strongly in $X^p$, where $v$ is a minimizer for $I_p$. By Lemma
\ref{lem:sign of minimizer}, and using the fact that $\Omega$ is connected, we conclude that $v \ge 0$ or $v \le 0$. By
Theorem~\ref{uniquenessOmega}, 
either $v=w$ or $v=-w$. If $v_n\to w$, then
$$o(1)=\|u_n-v_n\|_{X^p}+\|v_n-w\|_{X^p}\geq \|u_n-w\|_{X^p}=\eps>0.$$
Hence, $v_n\to -w$. But then
$$o(1)=\|u_n-v_n\|_{X^p}+\|v_n+w\|_{X^p}\geq \|u_n+w\|_{X^p},$$
which implies that
$$o(1)+\eps=\|u_n+w\|_{X^p}+\|u_n-w\|_{X^p} \geq 2\|w\|_{X^p},$$
contradicting our choice of $\eps$. This proves \eqref{eq:mp}.

Since
\[ 0 > c_{mp} \ge \inf \{ I_p(u): \, \norm{u-w}_{X^p} = \varepsilon\} > \mu_p=I_p(\pm w), \]
Lemma~\ref{lem:ps} says that $I_p$ satisfies the $(PS)_{c_{mp}}$-condition, and the Mountain Pass Theorem \cite[Theorem 2.9]{w} yields a critical point $u\in X^p$ of $I_p$ such that $I_p(u)=c_{mp}$. 

Finally, Theorem~\ref{uniquenessOmega} states that $u$ must change sign. 

\underline{\emph{Case $p=1$.}}
Recall the definition of critical point $I_1$ given in Definition~\ref{crit point}.  Fix $\varepsilon \in (0, 2 \norm{w}_{X^1})$. We show that 
    \[ \inf \{ I_1(u): \norm{u-w}_{X^1} = \varepsilon \} > \mu_1. \]
    Assume by contradiction that there exists $u_n \in X^1$ such that 
    \[ I_1(u_n)\to \mu_1 \quad \text{ and } \quad \norm{u_n - w}_{X^1} =\varepsilon. \]
    Then up to a subsequence, $u_n \rightharpoonup u $ in $D^{1, 2}(\R^N)$, strongly in $L^1_{loc}(\R^N)$, and a.e. in $\R^N$. Thus
    \[ \mu_1 = \lim_{n \to \infty} I_1(u_n) \ge I_1(u) \ge \mu_1, \]
    and Brezis--Lieb Lemma \cite[Lemma 1.32]{w}
     implies strong convergence in $X^1$. Moreover, $u$ is a minimizer of $I_1$, and by uniqueness of the minimizers (see Lemma~\ref{lem:sign of minimizer} and Theorem~\ref{uniquenessOmega}), we conclude that $u= \pm w$. However, this contradicts $\norm{u_n - w}_{X^1} = \varepsilon$. 

    Using Lemma~\ref{lem:ps}, we now apply \cite[Theorem 3]{Shi} to infer that there exists a critical point $u \in X^1$ of $I_1$ at level $c_{mp}$, namely $0 \in \partial I_1(u)$ and $I_1(u)=c_{mp}$.  

    The fact that $u$ is a sign-changing solution of \eqref{sublinear} follows by Lemma~\ref{opt sol} and Theorem~\ref{uniquenessOmega}. 
\end{proof}

\subsection{A least-energy nodal solution}

In the previous section we showed that if $\o$ is connected the set of nodal solutions to \eqref{sublinear} is nonnempty.  Now we show that there is one nodal solution that has least energy. If $p>1$, the least energy level is given by
\begin{align*}
c_{nod}:=\inf\{I_p(u):u\in X^p, \ I_p'(u)=0, \ u^+\neq0, \ u^-\neq0\}.    
\end{align*}
On the other hand, if $p=1$, the least energy level is stated in terms of the  generalized gradient, namely, 
\begin{align*}
c_{nod}:= \inf \{ I_1(u): \, u \in X^1, \, 0 \in \partial I_1(u),\, u^+ \ne 0, \, u^- \ne 0 \}.    
\end{align*}

In any case, note that $\mu_p\leq c_{nod}\leq c_{mp}<0$ for all $p\in [1,2)$.

\begin{theorem} \label{thm:c_nod}
Assume that $\o$ is connected. Then $\mu_p < c_{nod}$, and $c_{nod}$ is attained.
\end{theorem}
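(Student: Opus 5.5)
We take a minimizing sequence of nodal critical points and show that it converges to a nodal critical point. The nodal set is nonempty by Theorem~\ref{c mp attained}, so $c_{nod}\le c_{mp}<0$ is finite. Since every solution satisfies $I_p(u)\ge\mu_p>-\infty$ (Lemma~\ref{lem:minimizer}), we also have $c_{nod}\ge\mu_p$.

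\textbf{Attainment.} Let $(u_k)$ be sign-changing solutions with $I_p(u_k)\to c_{nod}$. For $p>1$ we have $I_p'(u_k)=0$; for $p=1$ we have $0\in\partial I_1(u_k)$. In either case $(u_k)$ is trivially a $(PS)_{c_{nod}}$-sequence at a negative level, so Lemma~\ref{lem:ps} gives a subsequence with $u_k\to u$ strongly in $X^p$. The limit $u$ is a critical point: for $p>1$ this follows from continuity of $I_p'$, and for $p=1$ by passing to the limit in $\int\nabla u_k\nabla\vp=\int Q_\o s_k\vp$ with $s_k\to s\in S(u)$ a.e., as in the proof of Lemma~\ref{lem:ps}. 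Hence $u$ is a solution by Lemma~\ref{opt sol}, and $I_p(u)=c_{nod}$.

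\textbf{Main obstacle: $u^\pm\neq0$.} It remains to show that $u^\pm\neq0$, i.e.\ that the positive and negative parts do not vanish in the limit. Testing the equation with $u_k^\pm$ gives
\[
\|u_k^\pm\|^2=\int_{\rn}Q_\o|u_k^\pm|^p\le\int_\o|u_k^\pm|^p\le C\|u_k^\pm\|^p,
\]
by \eqref{eq:basic inequality}. Since $p<2$, this inequality alone gives only an upper bound, not a lower one. Instead we use the uniform regularity of Lemma~\ref{Linfty:lem}, \eqref{localbd}, together with the following argument. First, $u_k^+$ cannot be supported in $\rn\smallsetminus\o$, because then $\|u_k^+\|^2\le-\int|u_k^+|^p$ would force $u_k^+=0$. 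So $u_k^+$ is positive somewhere in $\o$.

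Now suppose $u_k^+\to0$ in $X^p$. Then $u=u^-\le0$ is a nonpositive solution, so $u\equiv0$ or $u=-w_p$ by Theorem~\ref{uniquenessOmega}. If $u\equiv0$, then $I_p(u)=0>c_{nod}$, a contradiction. So $u=-w_p$, and $-w_p<0$ on $\overline\o$ by Lemma~\ref{pos:ground state}. By the $\cC^{1,\alpha}_{loc}$ convergence, which follows from the uniform $W^{2,q}_{loc}$ bounds as in the proof of Proposition~\ref{prop conv p 1}, $u_k<0$ on $\overline\o$ for $k$ large. This contradicts the fact that $u_k^+$ must be positive somewhere in $\o$. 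The same argument applies to $u_k^-$, so $u$ is nodal and $c_{nod}$ is attained.

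\textbf{Strict inequality $\mu_p<c_{nod}$.} Since $c_{nod}$ is attained by a sign-changing critical point $u$, if $c_{nod}=\mu_p$ then $u$ would be a minimizer of $I_p$. By Lemma~\ref{lem:sign of minimizer} ($\o$ connected), $u$ would then not change sign, a contradiction. Hence $\mu_p<c_{nod}$.

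The delicate step is passing to the limit in the $p=1$ case while keeping $s\in S(u)$. We handle it exactly as in Lemma~\ref{lem:ps}, and the $\cC^1$ convergence near $\overline\o$ still follows from the $W^{2,q}_{loc}$ estimates of \cite[Theorem B.2]{S00}, since $|Q_\o s_k|\le1$.
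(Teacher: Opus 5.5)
Your proposal is correct and follows essentially the paper's argument. A minimizing sequence of nodal critical points converges strongly in $X^p$ by Lemma~\ref{lem:ps}. A one-signed nontrivial limit is strictly signed on $\overline{\Omega}$, so uniform convergence on $\overline{\Omega}$ and testing the equation with $u_k^\pm$ force the opposite part of $u_k$ to vanish for large $k$. Then $\mu_p<c_{nod}$ follows from Lemma~\ref{lem:sign of minimizer}.

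The only difference is cosmetic: you identify the one-signed limit as $\pm w_p$ via Theorem~\ref{uniquenessOmega} and Lemma~\ref{pos:ground state}. The paper instead gets the strict sign on $\overline{\Omega}$ directly from Lemma~\ref{pos:lem}.
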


\begin{proof}
\underline{\emph{Case $p>1$.}}
Let $u_k\in X^p$ be such that
\begin{equation}\label{eq:minimizing nodal}
I_p(u_k)\to c_{nod},\quad I_p'(u_k)=0,\quad u_k^+\neq0\text{ \ and \ }u_k^-\neq0.
\end{equation}
By Lemma~\ref{lem:ps}, after passing to a subsequence, $u_k\to u$ strongly in $X^p$. Therefore, $I_p(u)=c_{nod}$ and $I_p'(u)=0$. Thus, if $u^+\neq0$ and $u^-\neq0$, then $c_{nod}$ is attained and, by Lemma~\ref{lem:sign of minimizer}, $c_{nod}>\mu_p$.

To show that $u^+\neq0$ and $u^-\neq0$ we argue by contradiction. Assume that $u^-=0$.  By the regularity bound in Lemma~\ref{Linfty:lem}, we know that 
\begin{align*}
    u_k\to u\quad \text{in \ }L^\infty(\Omega).
\end{align*}
By Lemma~\ref{pos:lem}, 
\begin{align}\label{pos:rmk}
\text{there exists $\vr>0$ such that $u(x)\geq 2\vr$ for all $x\in\bar\o$. }    
\end{align}
Let $k_0\in\n$ be such that $|u_k(x)-u(x)|<\vr$  for all $x\in\bar\o$ and all $k\geq k_0$. Then $u_k(x)\geq\vr>0$ for all $x\in\bar\o$ and all $k\geq k_0$. Since $0=I_p'(u_k)u_k^-=I_p'(u_k^-)u_k^-$, it follows that
$$\|u_k^-\|^2=\irn Q_\o |u_k^-|^p=-\irn|u_k^-|^p\leq 0\qquad\text{for all \ }k\geq k_0.$$
Therefore, $u_k^-=0$ for all $k\geq k_0$, contradicting \eqref{eq:minimizing nodal}. The other case is similar.

\underline{\emph{Case $p=1$.}} Let $u_k\in X^1$ be such that
\begin{equation*}
I_1(u_k)\to c_{nod},\quad 0\in\partial I_1(u_k),\quad u_k^+\neq0\text{ \ and \ }u_k^-\neq0.
\end{equation*}
Then $(u_k)$ is a $(PS)_{c_{nod}}$-sequence for $I_1$ in the sense of Definition~\ref{def ps p1}, and, by Lemma~\ref{lem:ps}, it admits a subsequence that converges in $X^1$. Moreover, since $0 \in \partial I_1(u_k)$, if $u_k(x) \ge \vr >0$ for all $x \in \overline \Omega$ and all $k \ge k_0$, then 
we have  
\[ \|u_k^-\|^2 = \int_{\R^N} Q_\Omega |u_k^-| = - \int_{\R^N} |u_k^-| \le 0.  \]
The proof is completed as in the case $p>1$. 
\end{proof}

\begin{remark}
If $\Omega$ is not connected, then it may happen that $c_{nod}=\mu_p$ as a consequence of Lemma~\ref{compact:sup:lem}; for instance, if $\Omega$ is the disjoint union of two balls far apart (so that the supports of the solutions at each ball does not intersect), then $\mu_p$ is achieved at a function which is positive on one ball and negative on the other.  

On the other hand, if $\Omega$ is not connected but it has a unique nonnegative solution (see, for instance, Theorems~\ref{thm:solset} and ~\ref{thm:one}), then one can still guarantee \eqref{pos:rmk} and show the existence of a least energy nodal solution in this case too (which could be a minimizer, as noted in Remark~\ref{rem:sign-changing}).
\end{remark}

\subsection{Mountain pass vs. least energy solutions}\label{sec:dumbbell}

Following \cite{BFMST18}, we now show that the mountain pass solution is different from the least energy nodal solution in some domains.

For fixed $p\in[1,2),$ we now write
\begin{align*}
I_\Omega(u):= \frac 12 \norm{u}^2 - \frac 1p \int_{\R^N} Q_\Omega |u|^p    
\end{align*}
in order to emphasize the dependence on $\Omega$. We use $\mu(\Omega)$ to denote  the least-energy level of $I_\Omega$ in $X^p$. Notice that, if $\Omega_1 \subseteq \Omega_2$, then, for any $u \in X^p$,
\begin{equation}\label{monot mu} 
I_{\Omega_1}(u) \ge I_{\Omega_2}(u) \quad \text{ and } \quad \mu(\Omega_1) \ge \mu(\Omega_2). 
\end{equation}

Let 
\begin{align*}
\Omega_0:=B_1(x_1) \cup B_1(x_2),    
\end{align*}
 where $x_1$ and $x_2$ are such that $B_1(x_1)$ and $B_1(x_2)$ have distance larger than the value $d(p, B_1(0),B_1(0))$ given by Theorem~\ref{thm:many}. We denote by $w_1$ the nonnegative minimizer of $I_{B_1(x_1)}$ and by $w_2$ the nonnegative minimizer of $I_{B_1(x_2)}$ (notice that one can be obtained  from the other one by translation). 

We consider the dumbbell domain $\Omega_\delta$ obtained by connecting in a smooth way $B_1(x_1)$ and $B_1(x_2)$ with a thin tube of width $\delta$ (to be chosen later). 
\begin{lemma}\label{lem:limit delta}
  One has $\mu(\Omega_\delta) \to \mu(\Omega_0)$ as $\delta \to 0$. Moreover, if $w_\delta$ is the positive optimizer of $\mu(\Omega_\delta)$, then $w_\delta \to w_1+w_2$ in $X^p$ as $\delta \to 0$.   
\end{lemma}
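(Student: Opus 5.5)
The plan is a monotonicity-plus-compactness argument. We may assume the dumbbells are nested, i.e. $\Omega_0\subset\Omega_\delta\subset\Omega_{\delta'}$ for $\delta<\delta'$, with $|\Omega_\delta\setminus\Omega_0|\to0$. Then \eqref{monot mu} gives $\mu(\Omega_\delta)\le\mu(\Omega_0)$, and $\mu(\Omega_\delta)$ is nonincreasing in $\delta$. By Theorem~\ref{thm:many} (the balls are far apart), the nonnegative minimizer of $I_{\Omega_0}$ is $w_1+w_2$, so $\mu(\Omega_0)=I_{\Omega_0}(w_1+w_2)$.

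For the reverse inequality, for any $u\in X^p$ I write
\[
I_{\Omega_0}(u)=I_{\Omega_\delta}(u)+\frac{2}{p}\int_{\Omega_\delta\setminus\Omega_0}|u|^p .
\]
Apply this to $u=w_\delta$. By Lemma~\ref{bounded Xp}(ii) applied with $\Omega=\Omega_{\delta_0}$ and the monotonicity $I_{\Omega_{\delta_0}}\le I_{\Omega_\delta}$, the family $w_\delta$ lies in a fixed bounded subset of $X^p$ for $\delta\le\delta_0$. Then H\"older and Sobolev give
\[
\int_{\Omega_\delta\setminus\Omega_0}|w_\delta|^p\le|\Omega_\delta\setminus\Omega_0|^{(2^*-p)/2^*}\,C\|w_\delta\|^p\to0 .
\]
Hence $\mu(\Omega_0)\le I_{\Omega_0}(w_\delta)=\mu(\Omega_\delta)+o(1)$, and therefore $\mu(\Omega_\delta)\to\mu(\Omega_0)$.

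For the convergence of optimizers, the last display also shows that $(w_\delta)$ is a minimizing sequence for $I_{\Omega_0}$, bounded in $X^p$. I would then run the argument of Lemma~\ref{lem:minimizer}(ii). Pass to a subsequence with $w_\delta\rightharpoonup w$ weakly in $D^{1,2}$, strongly in $L^p_{loc}$ and a.e. Fatou on $\rn\setminus\Omega_0$ and strong $L^p$ convergence on $\Omega_0$ give $I_{\Omega_0}(w)\le\liminf I_{\Omega_0}(w_\delta)=\mu(\Omega_0)$. So $w\ge0$ is a minimizer, and by Theorem~\ref{uniquenessOmega}, $w=w_1+w_2$; uniqueness also gives convergence of the whole family. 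Since every inequality in the lower-semicontinuity chain is then an equality, $\|w_\delta\|\to\|w\|$ and $\int_{\rn\setminus\Omega_0}|w_\delta|^p\to\int_{\rn\setminus\Omega_0}|w|^p$. Together with local $L^p$ convergence on $\Omega_0$, this yields $|w_\delta|_p\to|w|_p$. The Brezis--Lieb lemma, exactly as in Lemma~\ref{lem:ps}, then upgrades this to $w_\delta\to w$ in $X^p$, while norm convergence plus weak convergence gives strong convergence in $D^{1,2}$. The case $p=1$ is identical, since only the functional and not its derivative is used.

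The main obstacle is the uniform $X^p$ bound needed to make the tube integral vanish. This is why I would fix a reference $\Omega_{\delta_0}$ containing all $\Omega_\delta$ and use that $\mathcal I^-$ for $I_{\Omega_{\delta_0}}$ contains $\{I_{\Omega_\delta}\le0\}$. If the dumbbells are not nested, I would instead use a fixed bounded set containing all $\Omega_\delta$ in the H\"older step of \eqref{estimate 1}, which works equally well.
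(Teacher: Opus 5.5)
Your proof is correct and follows essentially the same route as the paper. Both use a uniform $X^p$ bound, the H\"older--Sobolev estimate that makes $\int_{\Omega_\delta\setminus\Omega_0}|w_\delta|^p$ vanish, and the Fatou/lower-semicontinuity chain that identifies the limit as the minimizer $w_1+w_2$ of $I_{\Omega_0}$ via Theorem~\ref{thm:many}, followed by Brezis--Lieb for strong convergence in $X^p$. The differences are cosmetic: you derive $\mu(\Omega_\delta)\to\mu(\Omega_0)$ first from the exact identity $I_{\Omega_0}=I_{\Omega_\delta}+\frac{2}{p}\int_{\Omega_\delta\setminus\Omega_0}|\cdot|^p$ before extracting limits, and you justify the uniform $L^p$ bound by applying Lemma~\ref{bounded Xp} on a fixed larger dumbbell, which makes explicit the paper's ``inspection'' of that lemma.
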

\begin{proof}
    We notice that, for any $u \in X^p$ and any $\delta >0$, since $\Omega_0 \subset \Omega_\delta$, 
\[ \mu(\Omega_\delta) \le \mu(\Omega_0). \]
Moreover, by H\"older and Sobolev inequalities, there exist $C, \tilde C>0$ independent of $\delta$ such that 
\begin{equation}\label{dis mu} \mu(\Omega_0) \ge \mu(\Omega_\delta) = I_{\Omega_\delta}(w_\delta) \ge \frac 12 \norm{w_\delta}^2 - \frac C p |\Omega_\delta|^{(2^*-p)/2^*} \norm{w_\delta}^p \ge \frac 12 \norm{w_\delta}^2 - \tilde C \norm{w_\delta}^p,  \end{equation}
whence $w_\delta$ is uniformly bounded in $D^{1, 2}(\R^N)$. Notice that an inspection of Lemma~\ref{bounded Xp} shows that $w_\delta$ is also uniformly bounded in the $L^p(\R^N)$ norm.
Thus, there exists $\hat w \in X^p$ such that $w_\delta \rightharpoonup \hat w$ in $D^{1, 2}(\R^N)$ and $w_\delta \to \hat w$ in $L^p_{loc}(\R^N)$.

On the other hand, by the H\"older and the Sobolev  inequalities, there exist $C, \tilde C>0$ independent of $\delta$ such that 
\[ \int_{\Omega_\delta \setminus \Omega_0} |w_\delta|^p \le C\, |\Omega_\delta \setminus \Omega_0|^{(2^*-p)/2^*} \norm{w_\delta}^p \le \tilde C \, |\Omega_\delta \setminus \Omega_0|^{(2^*-p)/2^*} \to 0, \quad \text{ as } \delta \to 0;\]
hence, 
\[ \int_{\Omega_\delta} |w_\delta|^p = \int_{\Omega_0} |w_\delta|^p + \int_{\Omega_\delta \setminus \Omega_0} |w_\delta|^p \to \int_{\Omega_0}|\hat w|^p. \]
Therefore, by \eqref{dis mu} and Fatou's Lemma, 
\[ \mu(\Omega_0) \ge \liminf_{\delta \to 0} I_{\Omega_\delta}(w_\delta) \ge I_{\Omega_0} (\hat w) \ge \mu(\Omega_0). \]
This implies $\mu(\Omega_\delta) \to \mu(\Omega_0)$. Moreover, $w_\delta \to \hat w$ in $D^{1, 2}(\R^N)$ and 
\[ \int_{\R^N} |w_\delta|^p = \int_{\R^N \setminus \Omega_\delta} |w_\delta|^p + \int_{\Omega_\delta} |w_\delta|^p  \to \int_{\R^N \setminus \Omega_0} | \hat w |^p + \int_{\Omega_0} |\hat w |^p = \int_{\R^N} |\hat w|^p, \]
so that the Brezis-Lieb Lemma \cite[Lemma 1.32]{w} implies 
$w_\delta \to \hat w$ in $X^p$. By Theorem~\ref{thm:many}, $\hat w = w_1+w_2$.
\end{proof}

We fix 
\begin{align}\label{epsstar}
    \varepsilon_*:=\norm{w_1}=\norm{w_2}.
\end{align}
Since $w_1, w_2$ have disjoint supports, $\norm{w_1\pm w_2}=2 \varepsilon_*$. 
    Then, there exists $\delta_0>0$ such that 
    \begin{equation}\label{estimate norm w delta} \norm{w_\delta} > \varepsilon_*
    \quad \text{ and }\quad \norm{w_\delta - (w_1-w_2)} > \varepsilon_*
    \qquad \text{for any $\delta \in (0, \delta_0)$}.
    \end{equation}
\begin{lemma}\label{lem:bound cmp}
Let $\delta_0$ be as in \eqref{estimate norm w delta} and let $\eps^*$ be as in \eqref{epsstar}. There exists $\delta_1 < \delta_0$ and $c > \mu(\Omega_0)$ such that, for any $\delta \in (0, \delta_1)$, 
    \[ I_{\Omega_\delta}(v) \ge c \quad \text{ for any } v \in X^p, \quad \norm{v - w_\delta}=\varepsilon_* \]
    and
     \[ I_{\Omega_\delta}(v) \ge c \quad \text{ for any } v \in X^p, \quad \norm{v + w_\delta}=\varepsilon_*. \]
\end{lemma}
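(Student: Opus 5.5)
We argue by contradiction, following \cite{BFMST18}. By the symmetry $I_{\Omega_\delta}(-v)=I_{\Omega_\delta}(v)$ and $\norm{v+w_\delta}=\norm{(-v)-w_\delta}$, the second assertion follows from the first, so it suffices to treat spheres centred at $w_\delta$. Suppose no such $\delta_1$ and $c$ exist. Then there are sequences $\delta_n\to0$ and $v_n\in X^p$ with
\[
\norm{v_n-w_{\delta_n}}=\varepsilon_*\qquad\text{and}\qquad \limsup_{n\to\infty} I_{\Omega_{\delta_n}}(v_n)\le \mu(\Omega_0).
\]
Since $I_{\Omega_{\delta_n}}(v_n)\ge\mu(\Omega_{\delta_n})$ and $\mu(\Omega_{\delta_n})\to\mu(\Omega_0)$ by Lemma~\ref{lem:limit delta}, we get $I_{\Omega_{\delta_n}}(v_n)\to\mu(\Omega_0)$. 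Thus $(v_n)$ is a minimizing sequence "in the limit".

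\emph{Step 1: compactness.} Because $I_{\Omega_{\delta_n}}(v_n)<0$ for large $n$, the estimates \eqref{dis mu} and Lemma~\ref{bounded Xp} give uniform bounds on $\norm{v_n}$ and $|v_n|_p$, with constants independent of $\delta$ since $|\Omega_\delta|$ is bounded. After passing to a subsequence, $v_n\rightharpoonup v$ in $D^{1,2}(\rn)$, $v_n\to v$ in $L^p_{loc}$ and a.e. Next we control the tube exactly as in the proof of Lemma~\ref{lem:limit delta}:
\[
\int_{\Omega_{\delta_n}\setminus\Omega_0}|v_n|^p\le C\,|\Omega_{\delta_n}\setminus\Omega_0|^{(2^*-p)/2^*}\to0.
\]
Combining this with Fatou's lemma on $\rn\setminus\Omega_{\delta_n}$ and local convergence on $\Omega_0$ yields
\[
\mu(\Omega_0)\ge\liminf_{n\to\infty} I_{\Omega_{\delta_n}}(v_n)\ge I_{\Omega_0}(v)\ge\mu(\Omega_0).
\]
Hence all inequalities are equalities. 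This gives $\norm{v_n}\to\norm{v}$, so $v_n\to v$ strongly in $D^{1,2}(\rn)$, and $v$ is a minimizer of $I_{\Omega_0}$.

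\emph{Step 2: identify the minimizers of $I_{\Omega_0}$.} Let $v$ minimize $I_{\Omega_0}$. Then $|v|$ is also a minimizer and is nonnegative, so by Theorem~\ref{uniquenessOmega} and Theorem~\ref{thm:many}, $|v|=w_1+w_2$. The functions $w_1,w_2$ have disjoint compact supports, each of which is connected: $w_i$ is the ground state for the single ball, and by radial symmetry and uniqueness its support is a ball. On the interior of each support $v$ does not vanish, so $v$ has constant sign there. Therefore
\[
v\in\{\pm(w_1+w_2),\ \pm(w_1-w_2)\}.
\]
The connectedness of each support should be justified carefully; this is one of the places where details are needed.

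\emph{Step 3: contradiction.} By Lemma~\ref{lem:limit delta}, $w_{\delta_n}\to w_1+w_2$, so
\[
\varepsilon_*=\norm{v_n-w_{\delta_n}}\to\norm{v-(w_1+w_2)}.
\]
Using disjointness of supports, this limit equals $0$ when $v=w_1+w_2$ and $2\varepsilon_*$ when $v=-(w_1+w_2)$. When $v=\pm(w_1-w_2)$, the limit equals $\norm{2w_2}=2\varepsilon_*$ or $\norm{2w_1}=2\varepsilon_*$. None of these values equals $\varepsilon_*$, which is the desired contradiction.

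The main obstacle is Step 1. The functionals $I_{\Omega_{\delta_n}}$ change with $n$, so we must pass to the limit in the sign-changing weight uniformly as the tube shrinks, and we must rule out loss of mass; the Hölder/Sobolev estimate on the tube handles this. Only convergence in $D^{1,2}$ is needed, because the spheres in the statement are taken in the $D^{1,2}$ norm.
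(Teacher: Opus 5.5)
Your proposal is correct and follows the paper's proof step by step: a contradiction sequence $\delta_k\to0$, uniform $X^p$ bounds, the tube estimate plus Fatou giving $\mu(\Omega_0)\ge I_{\Omega_0}(v)$ and hence strong $D^{1,2}$ convergence to a minimizer $v$ of $I_{\Omega_0}$, the identification $|v|=w_1+w_2$ via Theorem~\ref{thm:many}, and the distance computation. Your Step 2 is more careful than the paper, which passes silently from $|v|=w_1+w_2$ to $v\in\{\pm w_1\pm w_2\}$. Two small fixes: first, connectedness of $\{w_i>0\}$ does not follow from radial symmetry alone but from the maximum principle, since $\Delta w_i=f_p(w_i)>0$ on $\{w_i>0\}\setminus\overline{B_1(x_i)}$ rules out components of $\{w_i>0\}$ that miss $\overline{B_1(x_i)}$; second, $\norm{w_1+w_2}=\sqrt2\,\varepsilon_*$, so the case $v=-(w_1+w_2)$ gives $2\sqrt2\,\varepsilon_*$ rather than $2\varepsilon_*$, which does not affect the contradiction.
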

\begin{proof}
    We only prove the first statement, as the second one is similar. By contradiction, let us assume that there exists $\delta_k \to 0$ and $v_k:=v_{\delta_k} \in X^p$ such that $\norm{v_k - w_{\delta_k}}=\varepsilon_*$ and $I_{\Omega_{\delta_k}}(v_k) \to \mu(\Omega_0)$ as $k \to \infty$. 
    Then, as in Lemma~\ref{lem:limit delta}, by Lemma~\ref{bounded Xp}, $v_k$ is bounded in $X^p$, and there exists $v \in X^p$ such that $v_k \rightharpoonup v$ in $D^{1, 2}(\R^N)$ and $v_k \to v$ in $L^p_{loc}(\R^N)$. 
    Since
    \[ \mu(\Omega_0) \le I_{\Omega_0} (v)\le  \lim_k I_{\Omega_{\delta_k}} (v_k) = \mu(\Omega_0), \]
    then $v_k \to v$ in $D^{1, 2}(\R^N)$, which implies by Lemma~\ref{lem:limit delta} that $\norm{v- (w_1+w_2)}=\varepsilon_*$. Moreover, $v$ is an optimizer of $\mu(\Omega_0)$, hence $|v| = w_1+w_2$ by Theorem~\ref{thm:many}. As a consequence,
    \[ \norm{v-(w_1+w_2)} = \begin{cases}
        0, & \text{ if } v=w_1+w_2,\\
        4 \varepsilon_*, & \text{ if } v=-w_1-w_2, \\
        2 \varepsilon_*, & \text{ if } v=w_1-w_2 \text{ or } v = w_2 - w_1. 
    \end{cases}\]
    This yields a contradiction, and concludes the proof.
\end{proof}

\begin{theorem}\label{dumbbell}
Let $\delta_1$ be as in Lemma~\ref{lem:bound cmp}.     If $\delta < \delta_1$, there exists a nodal solution $v$ of 
    \begin{equation}\label{eq:omega delta} - \Delta v= Q_{\Omega_\delta} |v|^{p-2}v \quad \text{ in } \R^N \end{equation}
    such that
    \[ c_{mp}(\Omega_\delta) > I_{\Omega_\delta}(v) \ge c_{nod}(\Omega_\delta). \]
\end{theorem}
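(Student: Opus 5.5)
The argument follows \cite{BFMST18}: we compare the mountain pass level on $\Omega_\delta$ with the energy of a sign-changing critical point produced by local minimization near $w_1-w_2$. Fix $\delta<\delta_1$.

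\emph{Step 1: a nodal critical point near $w_1-w_2$.} Consider the closed ball $B:=\{v\in X^p:\ \norm{v-(w_1-w_2)}\le \varepsilon_*/2\}$ (closed in the $D^{1,2}$ norm, intersected with $X^p$) and the level $m_\delta:=\inf_B I_{\Omega_\delta}$. By \eqref{monot mu}, $I_{\Omega_\delta}(w_1-w_2)\le I_{\Omega_0}(w_1-w_2)=\mu(\Omega_0)$, so $m_\delta\le\mu(\Omega_0)$. I claim that for $\delta$ small the infimum is not approached near $\partial B$: arguing as in Lemma~\ref{lem:bound cmp}, if $v_k\in\partial B$ with $I_{\Omega_{\delta_k}}(v_k)\to\mu(\Omega_0)$, then compactness (Lemma~\ref{bounded Xp}, Fatou, Brezis--Lieb) gives a minimizer $v$ of $I_{\Omega_0}$ with $\norm{v-(w_1-w_2)}=\varepsilon_*/2$. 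But every minimizer of $I_{\Omega_0}$ is of the form $\pm w_1\pm w_2$ (Theorem~\ref{thm:many} and $I(|v|)=I(v)$), and these lie at distances $0,2\varepsilon_*,4\varepsilon_*$. Hence $\inf_{\partial B}I_{\Omega_\delta}\ge c'>\mu(\Omega_0)\ge m_\delta$, possibly after shrinking $\delta_1$.

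Ekeland's principle on $B$, combined with the $(PS)$ condition of Lemma~\ref{lem:ps}, then yields an interior local minimizer $v$ of $I_{\Omega_\delta}$, hence a critical point. For $p=1$ this uses \cite[Proposition 6]{Clarke} and Lemma~\ref{opt sol}. The same compactness shows $v\to w_1-w_2$ as $\delta\to0$; since $w_1>0$ on $B_1(x_1)$ and $w_2>0$ on $B_1(x_2)$, the uniform $\cC^{1,\alpha}$ bound of Lemma~\ref{Linfty:lem} forces $v^\pm\neq0$. Thus $v$ is nodal and $I_{\Omega_\delta}(v)\ge c_{nod}(\Omega_\delta)$.

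\emph{Step 2: $c_{mp}(\Omega_\delta)>I_{\Omega_\delta}(v)$.} Any path $\sigma\in\Gamma$ from $-w_\delta$ to $w_\delta$ must cross the sphere $\{\norm{u-w_\delta}=\varepsilon_*\}$. This holds because $\norm{-w_\delta-w_\delta}=2\norm{w_\delta}>\varepsilon_*$ by \eqref{estimate norm w delta}, and the norm is continuous along $\sigma$. By Lemma~\ref{lem:bound cmp}, $\sup_t I_{\Omega_\delta}(\sigma(t))\ge c>\mu(\Omega_0)$, so $c_{mp}(\Omega_\delta)\ge c$. On the other hand, $I_{\Omega_\delta}(v)=m_\delta\le\mu(\Omega_0)<c$, which gives the strict inequality.

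\emph{Main obstacle.} The delicate point is Step 1: proving that the local minimizer lies in the interior of $B$ uniformly in small $\delta$, and that it is genuinely sign-changing. For $p=1$ this must be done within the nonsmooth framework, applying Ekeland with generalized gradients and the Palais--Smale notion of Definition~\ref{def ps p1}. To handle this, I would first try to prove the uniform separation $\inf_{\partial B}>\mu(\Omega_0)$ by mirroring the proof of Lemma~\ref{lem:bound cmp} exactly.
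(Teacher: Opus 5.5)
Your proof is correct, but Step 1 takes a different route from the paper's.

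\textbf{How the paper does it.} The paper does not localize near $w_1-w_2$. It minimizes $I_{\Omega_\delta}$ over the region $\mathcal{A}_\delta=\{v\in X^p:\norm{v\pm w_\delta}>\varepsilon_*\}$, that is, outside the $\varepsilon_*$-balls around $\pm w_\delta$.
\begin{itemize}
\item The boundary of $\mathcal{A}_\delta$ is exactly where Lemma~\ref{lem:bound cmp} already gives $I_{\Omega_\delta}\ge c>\mu(\Omega_0)$.
\item Since $w_1-w_2\in\mathcal{A}_\delta$ by \eqref{estimate norm w delta}, one gets $\inf_{\mathcal{A}_\delta}I_{\Omega_\delta}\le\mu(\Omega_0)$.
\item Ekeland's principle together with Lemma~\ref{lem:ps} then produces a critical point.
\item Nodality is immediate from Theorem~\ref{uniquenessOmega} on the connected set $\Omega_\delta$: a one-signed nontrivial solution would have to be $\pm w_\delta$, and these are excluded from $\overline{\mathcal{A}_\delta}$.
\end{itemize}
So the paper reuses Lemma~\ref{lem:bound cmp} for both the mountain-pass bound and the constrained minimization, and needs no further smallness of $\delta$.

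\textbf{What your route costs and buys.} You need a fresh separation estimate on $\partial B$ (a variant of Lemma~\ref{lem:bound cmp} with radius $\varepsilon_*/2$ around $w_1-w_2$). You also need a separate limiting argument as $\delta\to0$ for nodality. Each of these may force you to shrink $\delta_1$, so strictly you prove the statement for a possibly smaller threshold than the $\delta_1$ fixed in the statement. This is harmless, since that $\delta_1$ is only asserted to exist, but the paper's choice of $\mathcal{A}_\delta$ avoids it. In exchange you get more information: your $v$ is a genuine local minimizer of $I_{\Omega_\delta}$ lying within $\varepsilon_*/2$ of $w_1-w_2$ and converging to it as $\delta\to0$. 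It is thus visibly positive on one ball and negative on the other. You could also replace Ekeland by the direct method here, since $B$ is weakly closed and $I_{\Omega_\delta}$ is weakly lower semicontinuous.

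\textbf{Two small corrections.}
\begin{itemize}
\item Since $w_1,w_2$ have disjoint supports, $\norm{w_1\pm w_2}=\sqrt2\,\varepsilon_*$. The distances from $w_1-w_2$ to the minimizers of $I_{\Omega_0}$ are therefore $0$, $2\varepsilon_*$ and $2\sqrt2\,\varepsilon_*$, not $0$, $2\varepsilon_*$, $4\varepsilon_*$. This does not affect the argument, since none of them equals $\varepsilon_*/2$.
\item For nodality you do not need a $\delta$-uniform $\mathcal{C}^{1,\alpha}$ bound; note that the constant in \eqref{localbd} is stated to depend on $\Omega$. The convergence $v_\delta^\pm\to(w_1-w_2)^\pm\neq0$ in $L^{2^*}$ already suffices. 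Alternatively, argue by uniqueness as the paper does: $w_\delta\notin B$ by \eqref{estimate norm w delta}, and $-w_\delta\notin B$ for small $\delta$ because $w_\delta\to w_1+w_2$.
\end{itemize}
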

\begin{proof}
\underline{ \emph{Case $p>1:$}}     Recall that by \eqref{estimate norm w delta}, $\norm{w_\delta}> \varepsilon_*$ if $\delta< \delta_0$. Therefore, any continuous path connecting $-w_\delta$ and $w_\delta$ intersects $\partial \mathcal{A}_\delta$, where 
    \[ \mathcal{A}_\delta:=\{ v \in X^p:\, \norm{v \pm w_\delta} > \varepsilon_*\}. \]
    Thus, applying Lemma~\ref{lem:bound cmp}, if $\delta < \delta_1$, 
    \[ c_{mp}(\Omega_\delta) \ge \inf_{\partial \mathcal A_\delta} I(v) \ge c > \mu(\Omega_0).  \]
    On the other hand, since $\norm{w_\delta -(w_1-w_2)} > \varepsilon_*$ by \eqref{estimate norm w delta}, and recalling \eqref{monot mu}, 
    \[ \mu(\Omega_0)=I_{\Omega_0}(w_1-w_2) \ge \inf_{\mathcal A_\delta} I_{\Omega_0}(v) \ge \inf_{\mathcal A_\delta} I_{\Omega_\delta}(v)=: \hat c_\delta \ge \mu(\Omega_\delta). \]
    We now show that the infimum $\hat c_\delta $ is attained by a nodal function, whence the conclusion follows. 
Fix $\delta$ and apply Ekeland's variational principle \cite[Theorem 2.4]{w} to get a sequence $v_k \in X^p$ such that $\norm{v_k \pm w_\delta} > \varepsilon_*$,  $I_{\Omega_\delta}(v_k) \to \hat c_\delta$ and $I'_{\Omega_\delta}(v_k) \to 0$. By Lemma~\ref{lem:ps}, there exists $v \in X^p$ such that $v_k \to v$ in $X^p$. Moreover, $I'_{\Omega_\delta}(v)\varphi=0$ for any $\varphi \in \mathcal{C}_c^\infty(\R^N)$, which means that $v$ is a solution to \eqref{eq:omega delta}.
If $v$ does not change sign, since $\Omega_\delta$ is connected, then an application of Theorem~\ref{uniquenessOmega} yields that $|v|$ is the nonnegative optimizer of $\mu(\Omega_\delta)$, and it coincides with $w_\delta$. However, $\norm{v\pm w_\delta} \ge \varepsilon_*$, a contradiction.

\medskip

\underline{ \emph{Case $p=1:$}} The case $p=1$ can be treated similarly. The only delicate part is the application of Ekeland's variational principle.  We want to prove that 
\[ \hat c_\delta:= \inf_{v \in \mathcal{A}_\delta} I_{\Omega_\delta}(v) \]
is attained, where 
\[ \mathcal{A}_\delta:=\{ v \in X^1:\, \norm{v \pm w_\delta} > \varepsilon_*\}. \]
Let us define
\[ F(v):=\begin{cases}
    I_{\Omega_\delta}(v), & \text{ if } v \in \mathcal{A}_\delta, \\
    +\infty, &\text{ otherwise. }
\end{cases}\]
Then,
\[ \hat c_\delta= \inf_{v \in X^1} F(v), \]
and, by Ekeland's variational principle \cite{ek}, there exists a sequence $v_n \in X^1$ and $\varepsilon_n \to 0$ such that 
\[+ \infty > \inf_{v \in X^1} F(v) + \varepsilon_n^2 \ge F(v_n)  \]
and
\[ F(v) > F(v_n) - \varepsilon_n \norm{v_n - v}_{X^1} \quad \text{ for any } v \ne v_n. \]
In particular, $v_n \in \mathcal{A}_\delta$, and
\[ I_{\Omega_\delta}(v_n) \to \hat c_\delta. \]
Moreover, for any $n \in \mathbb{N}$, there exists $\lambda$ small enough such that 
$\norm{v_n +\lambda v \pm w_{\delta}} > \varepsilon_*$ for any $v \in X^1$ such that $ \norm{v}_{X^1}=1$, and 
\[ I_{\Omega_\delta}(v_n + \lambda v) > I_{\Omega_\delta}(v_n) - \varepsilon_n \lambda \quad \text{ for any } v \in X^1 \text{ such that } \norm{v}_{X^1}=1.  \]
Thus
\[ 0 \in \partial I_{\Omega_\delta}(v_n) + \varepsilon_n \overline {B^*}, \]
where $B^*$ is the unit ball in $(X^1)^*$. As a consequence, $v_n $ is a $(PS)_{\hat c}$-sequence in the sense of Definition~\ref{def ps p1}, and it admits a converging subsequence. The rest of the proof follows as in the case $p >1$.
\end{proof}

\subsection{Nodal symmetric solutions}

Let $G$ be a closed subgroup of the group $O(N)$ of linear isometries of $\rn$ and let $\phi:G\to\z/2:=\{1,-1\}$ be a continuous homomorphism of groups. A function $u:\rn\to\r$ is called \emph{$\phi$-equivariant} if it satisfies
$$u(gx)=\phi(g)u(x)\qquad\text{for every \ }g\in G, \ x\in\rn.$$
We write $Gx:=\{gx:g\in G\}$ for the $G$-orbit of the point $x$, and assume from now on that $\o$ is $G$-invariant, that is, that $Gx\subset\o$ for every $x\in\o$. Then, the functional $I_p$ satisfies $I_p(u(gx))=I_p(\phi(g)u(x))$ for every $g\in G$ and $x\in\rn$. By the principle of symmetric criticality for Banach spaces, see \cite[Theorem 2.2]{ko}  and \cite{km} for the case $p=1$, the $\phi$-equivariant solutions to the problem \eqref{sublinear} are the critical points of the restriction of $I_p$ to the space
$$(X^p)^\phi:=\{u\in X^p:u \text{ is }\phi\text{-equivariant}\}.$$
This space might be trivial; for instance, if $G=O(N)$ and $\phi=\det:O(N)\to\z/2$ is the determinant, then the only $\phi$-equivariant function is the trivial one. To avoid this behavior we ask that $\phi$ has the following property:
\begin{itemize}
\item[$(*)$] There exists $x_0\in\o$ such that $Gx_0\neq Kx_0$, where $K:=\ker\phi$.
\end{itemize}
Then $D^{1,2}_0(\o)^\phi:=\{u\in D^{1,2}_0(\o):u \text{ is }\phi\text{-equivariant}\}$ has infinite dimension. Note that $(*)$ implies that $\phi$ is surjective. Thus, every nontrivial $\phi$-equivariant function changes sign. Let
$$\mu_p^\phi:=\inf_{u\in(X^p)^\phi}I_p(u).$$

\begin{lemma} \label{lem:G-minimizer}
\begin{itemize}
\item[$(i)$]$-\infty<\mu_p\leq\mu_p^\phi<0$.
\item[$(ii)$] $\mu_p^\phi$ is attained at a $\phi$-equivariant solution to the problem \eqref{sublinear}.
\end{itemize}
\end{lemma}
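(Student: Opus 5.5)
For $(i)$, the inequality $\mu_p\le\mu_p^\phi$ is immediate because $(X^p)^\phi\subset X^p$, and $\mu_p>-\infty$ is Lemma~\ref{lem:minimizer}$(i)$. To get $\mu_p^\phi<0$, I will use the fact that $D^{1,2}_0(\o)^\phi$ is nontrivial by $(*)$. Fix any $\psi\in D^{1,2}_0(\o)^\phi\cap\cC^\infty_c(\o)$ with $\psi\neq0$; such a $\psi$ exists by averaging a bump function over $G$ with weight $\phi$, choosing the bump near $x_0$ so that the average does not vanish, which is what $(*)$ guarantees. Then $\lambda\psi\in(X^p)^\phi$ for all $\lambda>0$ and
\[
I_p(\lambda\psi)=\frac{\lambda^2}{2}\|\psi\|^2-\frac{\lambda^p}{p}\io|\psi|^p<0
\]
for $\lambda$ small, since $p<2$.

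For $(ii)$, I will take a minimizing sequence $(u_k)\subset(X^p)^\phi$ and repeat the proof of Lemma~\ref{lem:minimizer}$(ii)$. Boundedness in $D^{1,2}$ follows from \eqref{eq:bounded}, or in $X^p$ from Lemma~\ref{bounded Xp}. Weak limits in $D^{1,2}$ together with a.e. limits give $u\in X^p$ by Fatou. Weak lower semicontinuity of the norm, Fatou on $\rn\smallsetminus\o$, and strong $L^p(\o)$ convergence (by compactness on the bounded set $\o$) then yield $I_p(u)\le\liminf I_p(u_k)=\mu_p^\phi$. The space $(X^p)^\phi$ is closed under a.e. limits because $G$ acts by linear isometries, so $u$ is $\phi$-equivariant and $u$ attains $\mu_p^\phi$. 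Unlike the nonsymmetric case, we cannot replace $u$ by $|u|$, since that would leave the equivariant space; we keep $u$ itself. Moreover $u\neq0$ because $\mu_p^\phi<0$, and hence $u$ changes sign because $\phi$ is surjective.

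The remaining step is showing that the minimizer is a solution. For $p>1$, $u$ is a critical point of $I_p|_{(X^p)^\phi}$, and the principle of symmetric criticality \cite[Theorem 2.2]{ko} gives $I_p'(u)=0$ in $(X^p)'$, so $u$ solves \eqref{sublinear}. For $p=1$, \cite[Proposition 6]{Clarke} gives $0\in\partial(I_1|_{(X^1)^\phi})(u)$. The nonsmooth symmetric criticality principle \cite{km} lifts this to $0\in\partial I_1(u)$, and then Lemma~\ref{opt sol} gives a solution.

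I expect this lifting for $p=1$ to be the main obstacle, since it requires checking the hypotheses of \cite{km}: $I_1$ must be locally Lipschitz and $G$-invariant under the isometric linear action $u\mapsto\phi(g)u(g^{-1}\cdot)$ on $X^1$. Both hold because $\o$ is $G$-invariant, so $Q_\o\circ g=Q_\o$, and the action preserves both $\|\cdot\|$ and $|\cdot|_1$.
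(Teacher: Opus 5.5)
Your proposal is correct and follows the paper's route. Part $(ii)$ is exactly the paper's argument: minimize directly over $(X^p)^\phi$ as in Lemma~\ref{lem:minimizer}$(ii)$, keeping $u$ rather than $|u|$, then apply symmetric criticality, in its nonsmooth form for $p=1$. For $(i)$ you scale an arbitrary nonzero $\phi$-equivariant function supported in $\Omega$, whereas the paper uses the first $\phi$-equivariant Dirichlet eigenfunction $\psi_1^\phi$ with the pointwise bound $|\varepsilon\psi_1^\phi|^p\geq\alpha|\varepsilon\psi_1^\phi|^2$; yours is the same idea in a slightly cleaner form, relying only on the $\lambda^2$ versus $\lambda^p$ mismatch and not needing $\psi_1^\phi\in L^\infty(\Omega)$.
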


\begin{proof}
$(i):$ The inequality $\mu_p\leq\mu_p^\phi$ is obvious.  To prove that $\mu_p^\phi<0$ consider the $\phi$-equivariant Dirichlet eigenvalue problem in $\o$,
$$-\Delta \psi = \lambda \psi,\qquad \psi\in D^{1,2}_0(\o)^\phi.$$
Let 
$$\lambda_1^\phi(\o):=\inf_{u\in D^{1,2}_0(\o)^\phi\smallsetminus\{0\}}\frac{\|u\|^2}{|u|_2^2}$$ 
be the first $\phi$-equivariant eigenvalue and $\psi_1^\phi\in D^{1,2}_0(\o)^\phi$ be the first $\phi$-equivariant eigenfunction that satisfies $|\psi_1^\phi|_2=1$. Fix $\alpha>\frac{p}{2}\lambda_1^\phi(\o)$ and take $\eps>0$ such that $|\eps\psi_1^\phi(x)|^p\geq \alpha|\eps\psi_1^\phi(x)|^2$ for every $x\in\o$. Then,
$$\mu_p^\phi\leq I(\eps\psi_1^\phi)=\frac{1}{2}\|\eps\psi_1^\phi\|^2-\frac{1}{p}\io|\eps\psi_1^\phi|^p\leq \frac{\lambda_1^\phi(\o)}{2}\io|\eps\psi_1^\phi|^2-\frac{\alpha}{p}\io|\eps\psi_1^\phi|^2<0,$$
as claimed.

$(ii):$ The proof of this statement is completely analogous to that of Lemma~\ref{lem:minimizer}$(ii)$.
\end{proof}

\begin{example}\label{ex:groups}Write $\rn=\cc\times\r^{N-2}$ and a point in $\rn$ as $x=(z,y)$ with $z\in\cc$ and $y\in\r^{N-2}$. For every even number $k\in\n$ let
$$\vr_k(x):=(\e^{\frac{2\pi\mathrm{i}}{k}}z,y),\qquad x=(z,y)\in\cc\times\r^{N-2},$$
be the rotation of angle $\frac{2\pi}{k}$ around $\{0\}\times\r^{N-2}$ and let $G_k$ be the group generated by $\vr_k$. Since $k$ is even, the homomorphism $\phi_k:G_k\to\z/2$ such that $\phi_k(\vr_k)=-1$ is well defined. Note that it satisfies assumption $(*)$. A function $u:\rn\to\r$ is $\phi_k$-equivariant if
$$u(\vr_k^n x)=(-1)^n u(x)\qquad\text{for all \ }x\in\rn, \ n=0,\ldots,k-1.$$
\end{example}

\begin{theorem}\label{thm:symm}
If $\o$ is invariant under rotations around $\{0\}\times\r^{N-2}$, that is,
\begin{equation*}
(\e^{\mathrm{i}\theta}z,y)\in\o\qquad\text{for every \ }\theta\in [0,2\pi], \ (z,y)\in\o, \ \text{ \ where \ }z\in\cc, \ y\in\r^{N-2},
\end{equation*}
then the problem \eqref{sublinear} has a sequence $(v_k)$ of sign-changing solutions such that
\begin{itemize}
\item[$(i)$] $v_k$ is $\phi_{2^k}$-equivariant and has least energy among all $\phi_{2^k}$-equivariant solutions to \eqref{sublinear}, where $\phi_{2^k}:G_{2^k}\to\z/2$ is the homomorphism defined in Example~\ref{ex:groups}.
\item[$(ii)$] $v_k\neq v_\ell$ if $k\neq\ell$,
\item[$(iii)$] $v_k\to 0$ strongly in $X^p$.
\end{itemize}
\end{theorem}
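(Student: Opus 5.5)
For each $k$ I will apply Lemma~\ref{lem:G-minimizer} with $G=G_{2^k}$ and $\phi=\phi_{2^k}$ from Example~\ref{ex:groups}. First I check that $\o$ is $G_{2^k}$-invariant, which follows from the rotational invariance assumption, and that $(*)$ holds. Since $\o$ is bounded, nonempty and rotation invariant, it contains a point $x_0=(z_0,y_0)$ with $z_0\neq 0$; for this point $\vr_{2^k}x_0\notin Kx_0$ because $K$ is generated by $\vr_{2^k}^2$. Lemma~\ref{lem:G-minimizer} then produces a $\phi_{2^k}$-equivariant solution $v_k$ with $I_p(v_k)=\mu_p^{\phi_{2^k}}<0$. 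Here $v_k\neq0$, and since $\phi_{2^k}$ is surjective, every nontrivial equivariant function changes sign. Every $\phi_{2^k}$-equivariant solution lies in $(X^p)^{\phi_{2^k}}$, so $v_k$ has least energy among them, which gives $(i)$.

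For $(iii)$ I will show $\mu_p^{\phi_{2^k}}\to0$. Any solution $u$ satisfies $I_p(u)=(\tfrac12-\tfrac1p)\|u\|^2$, so $\|v_k\|^2=\frac{2p}{2-p}|\mu_p^{\phi_{2^k}}|\to0$. Lemma~\ref{bounded Xp}$(i)$ gives $\|v_k\|_{X^p}\le C\|v_k\|$, hence $v_k\to0$ in $X^p$. To show the levels tend to zero, I take any $u\in(X^p)^{\phi_{2^k}}$ with $I_p(u)\le0$. Then $\|u\|^2\le \frac2p\io|u|^p\le C\|u\|^{p}\,|\o|^{(2^*-p)/2^*}$, but this alone does not force smallness. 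The key step is to use the symmetry. A $\phi_{2^k}$-equivariant function changes sign across the $2^{k}$ sectors $S_j:=\{(\e^{\mathrm{i}\theta}z,y):\theta\in[2\pi j/2^{k},2\pi(j+1)/2^{k})\}$, and it vanishes on the half-hyperplanes bounding them. Then $u|_{S_0}$, extended by zero, is essentially a $D^{1,2}_0$ function on a thin wedge. On the wedge $W_k:=\o\cap S_0$ I will use the Dirichlet Poincaré inequality $\int_{W_k}|u|^2\le \lambda_1(W_k)^{-1}\int_{W_k}|\nabla u|^2$, where $\lambda_1(W_k)\to\infty$ as $k\to\infty$ because the angular opening shrinks (a Wirtinger inequality in the angle variable, with $\o$ bounded).

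Combining Hölder with this Poincaré inequality gives $\io|u|^p\le |\o|^{1-p/2}\big(\io|u|^2\big)^{p/2}\le |\o|^{1-p/2}\lambda_1(W_k)^{-p/2}\|u\|^p$, using the sector decomposition and equal contributions of the sectors. Hence $I_p(u)\ge \frac12\|u\|^2-C\lambda_1(W_k)^{-p/2}\|u\|^p$. Minimizing in $\|u\|$ yields $\mu_p^{\phi_{2^k}}\ge -C'\lambda_1(W_k)^{-p/(2-p)}\to0$. This also covers $p=1$. I expect the main obstacle to be making the angular Poincaré estimate rigorous: I will write $\int|u|^2$ in cylindrical coordinates $(r,\theta,y)$, apply the one-dimensional inequality $\int_0^{a}|f|^2\le (a/\pi)^2\int_0^a|f'|^2$ for $f$ vanishing at the endpoints, with $a=2\pi/2^k$, and use $|\partial_\theta u|\le r|\nabla u|\le R|\nabla u|$ on $\o\subset B_R$. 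It is enough to apply this to the integral over $\o$, since the $L^p$ term only involves $\o$.

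Finally, $(ii)$ follows from $(iii)$ together with $v_k\neq0$. Passing to a subsequence along which $\|v_k\|$ is strictly decreasing, the solutions are pairwise distinct. Alternatively, one can note that $v_k$ has energy $\mu_p^{\phi_{2^k}}$, and these levels are negative, nondecreasing in $k$ (since $(X^p)^{\phi_{2^{k+1}}}\subset(X^p)^{\phi_{2^k}}$ as $\vr_{2^{k+1}}^2=\vr_{2^k}$), and tend to $0$. So infinitely many of them are distinct, and I relabel to obtain distinct $v_k$.
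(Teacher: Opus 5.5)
Your proof of $(i)$ is fine. There are genuine gaps in $(ii)$ and in the key step of $(iii)$, both repairable.

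\textbf{On $(ii)$.} Passing to a subsequence along which $\|v_k\|$ decreases does not prove $(ii)$ as stated. After relabeling, the $j$-th term is $\phi_{2^{k_j}}$-equivariant, not $\phi_{2^j}$-equivariant, so $(i)$ no longer holds. Moreover, $\|v_k\|\to0$ alone cannot rule out, say, $v_k=v_{k+1}$.

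Your alternative rests on a sign error. If $u$ is $\phi_{2^{k+1}}$-equivariant, then $u(\varrho_{2^k}x)=u(\varrho_{2^{k+1}}^2x)=(-1)^2u(x)=u(x)$. So $u$ is $G_{2^k}$-\emph{invariant}, not $\phi_{2^k}$-equivariant. Hence the inclusion $(X^p)^{\phi_{2^{k+1}}}\subset(X^p)^{\phi_{2^k}}$ is false, and the claimed monotonicity of the levels is unsupported.

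The correct observation is the opposite one, and it is exactly the paper's proof. For $k<\ell$ we have $\varrho_{2^k}=\varrho_{2^\ell}^{\,n}$ with $n=2^{\ell-k}$ even. So a function that is both $\phi_{2^\ell}$- and $\phi_{2^k}$-equivariant satisfies $u(x)=u(\varrho_{2^k}x)=-u(x)$. The two spaces therefore meet only in $\{0\}$, and since $v_\ell\neq0$, $v_k\neq v_\ell$ for all $k\neq\ell$, with no subsequence needed.

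\textbf{On $(iii)$.} The claim that a $\phi_{2^k}$-equivariant function vanishes on the half-hyperplanes $\{\theta=2\pi j/2^k\}$ is false. For instance, $u=\mathrm{Re}(z^{2^{k-1}})\,g(|z|,y)$ is $\phi_{2^k}$-equivariant, because $z^{2^{k-1}}$ changes sign under $\varrho_{2^k}$. Yet it equals $|z|^{2^{k-1}}g$ on $\{\theta=0\}$. In general the nodal set may even depend on $(r,y)$. So neither the Dirichlet Poincar\'e inequality on the wedge nor your endpoint-vanishing inequality on the fixed interval $[0,a]$ is available.

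The estimate you need is nevertheless true. For a.e.\ $(r,y)$, the function $f(\theta):=u(re^{\mathrm{i}\theta},y)$ satisfies $f(\theta+a)=-f(\theta)$ with $a=2\pi/2^k$. Hence only Fourier modes $m\in2^{k-1}(2\mathbb{Z}+1)$ occur, and $\int_0^{2\pi}|f|^2\le4^{1-k}\int_0^{2\pi}|f'|^2$. Equivalently, $f$ has a zero $\theta_0(r,y)$ by the intermediate value theorem, hence zeros at every $\theta_0+ja$, and your endpoint inequality applies on those intervals.

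Since $\Omega$ is a union of full circles and $|\partial_\theta u|\le|z|\,|\nabla u|\le R|\nabla u|$ on $\Omega\subset B_R$, you get $\int_\Omega|u|^2\le4^{1-k}R^2\|u\|^2$ on $(X^p)^{\phi_{2^k}}$. Prove this first for smooth equivariant functions, obtained by radial mollification and radial cut-off, then pass to the limit. The rest of your argument then goes through and gives
$\mu_p^{\phi_{2^k}}\ge-\frac{2-p}{2p}|\Omega|(2^{1-k}R)^{2p/(2-p)}$.

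With this repair your route to $(iii)$ genuinely differs from the paper's. The paper argues by compactness:
\begin{itemize}
\item it takes $I_p(v_k)\to c$ along a subsequence;
\item if $c<0$, it uses the $(PS)_c$ condition to get $v_k\to v$ in $X^p$;
\item it then excludes $v\neq0$ via Egorov's theorem and the identity $|\supp(v_k^+)\cap\Sigma|=|\supp(v_k^-)\cap\Sigma|$ on fine sectors.
\end{itemize}
That argument needs no functional inequality but gives no rate. Your repaired version is a direct a priori bound that bypasses the Palais--Smale condition. It also yields the explicit decay $\|v_k\|\le|\Omega|^{1/2}(2^{1-k}R)^{p/(2-p)}$, uniformly for $p\in[1,2)$.
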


\begin{proof}
Let $v_k$ be the $\phi_{2^k}$-equivariant solution given by Lemma~\ref{lem:G-minimizer}$(ii)$. Then $v_k$ satisfies $(i)$.

Let $k<\ell$ and assume that $v_k=v_\ell$. Let $x_0\in\rn$ be such that $v_\ell(x_0)\neq 0$. Since $\vr_\ell^n=\vr_k$ with $n:=2^{\ell-k}$ an even number, we have that
$$v_\ell(x_0)=v_\ell(\vr_\ell^nx_0)=v_k(\vr_kx_0)=-v_k(x_0)=-v_\ell(x_0),$$
which is a contradiction. Therefore, $(ii)$ holds true.

Next, we prove $(iii)$. Given $0\leq r_1<r_2<\infty$, \  $0\leq\theta_1<\theta_2\leq 2\pi$ and real numbers $a_i<b_i$, we set
$$\Sigma:=\{(r\e^{\mathrm{i}\theta},y)\in\cc\times\r^{N-2}:r\in[r_1,r_2], \ \theta\in[\theta_1,\theta_2], \ y_i\in[a_i,b_i]\},$$
and call it a $(\theta_2-\theta_1)$-sector. Note that, if $\theta_2-\theta_1=\frac{2\pi}{2^m}$ for some $m\in\n$, then, for any $k>m$, $\Sigma$ is the union of an even number of $\frac{2\pi}{2^k}$-sectors  whose interiors are pairwise disjoint. As a consequence, every $\phi_{2^k}$-equivariant function $u$ satisfies
\begin{equation}\label{eq:supports}
|\supp(u^+)\cap\Sigma|=|\supp(u^-)\cap\Sigma|.
\end{equation}
Now, as $I_p(v_k)\in[c_{nod},0)$ and $v_k$ is a solution of \eqref{sublinear}, after passing to a subsequence, we have that $I_p(v_k)\to c\in[c_{nod},0]$. If $c=0$, then, since $\|v_k\|^2=\int_{\rn}Q_\Omega |v_k|^p$,
$$o(1)=I_p(v_k)=\frac{p-2}{2p}\|v_k\|^2.$$
Hence, $v_k\to 0$ strongly in $X^p$, as claimed. If, on the other hand, $c<0$, Lemma~\ref{lem:ps} states that, after passing to a subsequence, $v_k\to v$ strongly in $X^p$ and a.e. in $\rn$. Hence, $v$ is a solution to problem \eqref{sublinear}. Arguing by contradiction, assume that $v\neq 0$. Then, there exists a $\frac{2\pi}{2^m}$-sector $\Sigma$ such that $v(x)\geq\eps>0$ for every $x\in\Sigma$. Fix $\delta\in(0,\frac{1}{2}|\Sigma|)$. By Egorov's theorem \cite[Theorem 7.12]{Bartle}, there is a set $U_\delta$ such that $|U_\delta|<\delta$ and $v_k\to v$ uniformly in $\Sigma\smallsetminus U_\delta$. Thus, there exists $k_0>m$ such that
$$|v_k(x)-v(x)|<\frac{\epsilon}{2}\qquad\text{for all \ }x\in \Sigma\smallsetminus U_\delta\text{ \ and \ }k\geq k_0,$$
and, as a consequence, $|v_k(x)|\geq\frac{\epsilon}{2}$ for all $x\in \Sigma\smallsetminus U_\delta$ and $k\geq k_0$. From \eqref{eq:supports} we get
$$|\Sigma|\geq |\supp(v_k^+)\cap\Sigma|+|\supp(v_k^-)\cap\Sigma|=2|\supp(v_k^+)\cap\Sigma|\geq 2|\Sigma|-2\delta>|\Sigma|,$$
a contradiction. This proves that $v=0$, as claimed.
\end{proof}

\section{Proof of main results}\label{sec:proofs}
We now collect the results of the previous sections, to give the proof of the main results stated in the Introduction. 

\begin{proof}[Proof of Theorem~\ref{thm:main}]
\emph{Statement 1.} This follows from Theorem~\ref{uniquenessOmega} and Lemma~\ref{pos:ground state}. 

\emph{Statement 2.} The fact that any solution to \eqref{sublinear} has compact support is proved in Lemma~\ref{compact:sup:lem}, whereas the fact that the support of the ground states converges to $\rn$ is a consequence of Theorem~\ref{thm:supp}. 

\emph{Statement 3.} This is a consequence of Theorem~\ref{thm:c_nod} and Theorem~\ref{c mp attained}. An example of a set for which the least energy nodal level and the mountain pass level are different is given in Section~\ref{sec:dumbbell}. 
\end{proof}

\begin{proof}[Proof of Theorem~\ref{thm:starshaped:intro}]
    The claim follows from Theorem~\ref{thm:starshaped} and Corollary~\ref{Cor:starshaped}.
\end{proof}

\begin{proof}[Proof of Theorem~\ref{thm:main:2}]
    \emph{Statement 1.} is a consequence of Theorem~\ref{uniquenessOmega}. \emph{Statement 2.} follows from Theorem~\ref{thm:one}. \emph{Statement 3.} is contained in Theorem~\ref{thm:many}.
\end{proof}

\begin{proof}[Proof of Theorem~\ref{thm:main:3}]
This result is contained in Theorem~\ref{thm:symm}.
\end{proof}

\section{Overdetermined problems}\label{sec:over}

In this section we offer a different perspective on problem \eqref{sublinear} for $p=1$ from the point of view of overdetermined problems.  Let $U\subset \rn$ be an open bounded subset of $\rn$, $N\geq 1,$ and let $D\subset U$. Consider the following overdetermined problem with sign-changing source
\begin{align}\label{odp}
 -\Delta \tau =
\begin{cases}
1 & \text{in \ }D, \\
-1& \text{in \ }U\smallsetminus D,
\end{cases} 
\qquad \tau = \partial_\nu \tau = 0\quad \text{ on }\partial U,\qquad  \tau>0\quad \text{ in }U. 
\end{align}

This equation has several interesting interpretations.  
\begin{enumerate}
    \item $\tau$ can describe a heat distribution in equilibrium in a thermally isolated body $U$ (due to the Neumann boundary conditions), where $D$ is being uniformly heated and $U\backslash D$ is being uniformly cooled. Solving \eqref{odp} amounts to asking if there are shapes $(D,U)$ so that, at thermal equilibrium, the temperature at the boundary $\partial U$ is exactly zero (Dirichlet boundary conditions). 
    \item $\tau$ can describe an electric potential (in electrostatic equilibrium) in an isolated body $U$ (due to the Neumann boundary conditions), where $D$ has a positive charge density of $+1$ and $U\backslash D$ has a negative charge density of $-1$. Solving \eqref{odp} means that  there exist configurations $(D,U)$ so that the electric field at the boundary $\partial U$ is exactly zero (Dirichlet boundary conditions). Indeed, in this case the electric field $E=\nabla \tau$ has zero perpendicular and tangential components at the boundary $\partial U$. 
    \item $\tau$ can describe the vertical deflection of an elastic membrane fixed at the boundary $\partial U$. This membrane is being pulled uniformly upwards in $D$ and downwards in $U\backslash D$. A solution $(D,U)$ of \eqref{odp} means that there is a configuration under which the membrane is flat at $\partial U$ (its gradient is zero at $\partial U$).
\end{enumerate}
Similar interpretations can be done in other settings such as fluid dynamics, elastic materials, and potential theory. 

In this setting, we can distinguish two types of problems.

\medskip

\noindent\textbf{The inner-set problem.}
For a given open bounded set $U\subset \rn$, is there an open bounded set $D \subset \subset U$ so that \eqref{odp} has a solution?

\medskip

\noindent\textbf{The outer-set problem.}
For a given open bounded set $D\subset \rn$, is there an open bounded set $U$ such that $D \subset \subset U$ and \eqref{odp} has a solution?

\medskip

As a corollary of our results, we can give a satisfactory answer to the outer-set problem. 

\begin{theorem}\label{odthm}
For any open bounded set $D\subset \rn$, there is a unique open bounded set $U\subset \rn$ such that $D \subset \subset U$ and \eqref{odp} has a solution.
\end{theorem}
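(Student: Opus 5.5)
The plan is to reduce Theorem~\ref{odthm} to the results on nonnegative solutions of \eqref{sublinear} with $p=1$ and $\Omega=D$. Throughout I take $D$ smooth, as in the rest of the paper. The correspondence is: \emph{solutions $(U,\tau)$ of \eqref{odp} are exactly the nonnegative solutions of \eqref{sublinear} (with $\Omega=D$, $p=1$) that are positive on every connected component of $D$, and then $U=\{\tau>0\}$.} Existence then comes from the ground state, and uniqueness from Theorem~\ref{thm:solset}.

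\textbf{Existence.} Let $w=w_1$ be the nonnegative ground state of \eqref{sublinear} with $\Omega=D$, $p=1$, and set $U:=\{w>0\}$.
\begin{itemize}
\item By Lemma~\ref{Linfty:lem}, $w\in\cC^{1,\alpha}_{loc}(\rn)$, so $U$ is open.
\item By Lemma~\ref{compact:sup:lem}, $U$ is bounded.
\item By Lemma~\ref{pos:ground state}, $w>0$ on the compact set $\overline D$, so $D\subset\subset U$.
\item In $U$ we have $\sgn(w)=1$, hence $-\Delta w=Q_D$, that is, $1$ in $D$ and $-1$ in $U\smallsetminus D$ (a.e., since $w\in W^{2,q}_{loc}$).
\item On $\partial U$, $w=0$. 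Since $w\ge0$ attains its minimum there and is $\cC^1$, also $\nabla w=0$, so $\partial_\nu w=0$.
\end{itemize}
Thus $\tau:=w|_U$ solves \eqref{odp}.

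\textbf{Uniqueness.} Let $(U,\tau)$ solve \eqref{odp} and let $\tilde\tau$ be the extension of $\tau$ by $0$ outside $U$.
\begin{itemize}
\item Since $\tau=\partial_\nu\tau=0$ on $\partial U$, integrating by parts against $\varphi\in\cC^\infty_c(\rn)$ gives
\[\int_{\rn}\nabla\tilde\tau\cdot\nabla\varphi=\int_U(-\Delta\tau)\varphi=\int_{\rn}Q_D\,\sgn(\tilde\tau)\varphi,\]
because $\sgn(\tilde\tau)=\chi_U$ and $D\subset U$. Moreover $\tilde\tau\in X^1$, being compactly supported with $\nabla\tilde\tau\in L^2$. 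Hence $\tilde\tau$ is a nonnegative solution of \eqref{sublinear} with $\Omega=D$.
\item Since $\tau>0$ on $D$, $\tilde\tau$ is nonzero on every component of $D$. In Theorem~\ref{thm:solset} this corresponds to $\mathcal J=\{1,\dots,\ell\}$, where the solution is unique.
\item The ground state $w$ also has $\mathcal J=\{1,\dots,\ell\}$ by Lemma~\ref{pos:ground state}. Therefore $\tilde\tau=w$, and $U=\{\tilde\tau>0\}=\{w>0\}$ is uniquely determined.
\end{itemize}

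\textbf{Main obstacle.} The delicate step is the integration by parts in the uniqueness argument, since $\partial U$ is a priori arbitrary. I would try first to interpret the boundary conditions as $\tau\in\cC^1(\overline U)$ with $\tau=|\nabla\tau|=0$ on $\partial U$, so that $\tilde\tau\in\cC^1(\rn)$. I would then justify the weak identity by testing on the sets $U_\eps:=\{\tau>\eps\}$, which have smooth boundary for a.e.\ $\eps$ by Sard's theorem. The boundary term $\int_{\partial U_\eps}\partial_\nu\tau\,\varphi$ is bounded by $\sup_{\{\tau=\eps\}}|\nabla\tau|\cdot\mathcal H^{N-1}(\{\tau=\eps\})$. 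To show it tends to $0$, I would integrate over $\eps$ using the coarea formula and average, rather than bound each level set individually. If this fails, I would instead assume $\partial U$ Lipschitz, where the divergence theorem applies directly.
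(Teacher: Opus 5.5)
Your argument follows the same route as the paper: existence comes from the nonnegative ground state $w$ of \eqref{sublinear} with $p=1$, $\Omega=D$, and uniqueness comes from extending $\tau$ by zero and invoking uniqueness of nonnegative solutions. It is more complete than the paper's short proof. That proof takes $U$ to be the interior of $\supp(w)$, attributes uniqueness to Theorem~\ref{thm:supp} (a result about $p\to2^-$), and never explains why an arbitrary solution of \eqref{odp} gives a solution on $\rn$. The following parts of your proposal are correct:
your choice $U=\{w>0\}$; your checks that $D\subset\subset U$ and that $\nabla w=0$ on $\partial U$; and your use of Theorem~\ref{thm:solset} with $\mathcal J=\{1,\dots,\ell\}$. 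The last point is needed when $D$ is disconnected, since Theorem~\ref{uniquenessOmega} then gives no uniqueness of nonnegative solutions.

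The step you flag is a genuine gap, which the paper passes over silently, and neither of your remedies closes it. Averaging the boundary fluxes over levels with the coarea formula only reproduces the quantity
\[
\frac1\eps\int_{\{\eps<\tau<2\eps\}}|\nabla\tau|^2\varphi .
\]
This is also what remains if you test the equation in $U$ with $\varphi\,\psi_\eps(\tau)$, where $\psi_\eps(t)=\min\{(t-\eps)^+/\eps,1\}$; that test function is Lipschitz with compact support in $U$, so Sard is not needed. The limit of this quantity is exactly the nonnegative part of $\Delta\tilde\tau$ concentrated on $\partial U$, which is what you want to show vanishes. Knowing only that $\nabla\tau\to0$ at $\partial U$ does not control it without bounds on $\mathcal H^{N-1}(\{\tau=s\})$.

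The Lipschitz fallback is inconsistent with the existence half, because $\{w>0\}$ need not be Lipschitz. The paper proves Lipschitz regularity only for strictly starshaped $D$. For $D=B_1(x_1)\cup B_1(x_2)$ with $|x_1-x_2|=2^{1+1/N}$, Section~\ref{sec:ex1} and Theorem~\ref{thm:solset} show that $\{w>0\}$ is a pair of tangent balls.

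What is missing is the bound $|\nabla\tau|^2\le C\tau$ in $U$. The Bernstein argument of Lemma~\ref{lem:bdsSpruck} with $p=1$ gives it for any solution of \eqref{odp}:
the function $\xi_\eps=|\nabla\tau|^2/(\tau+\eps)$ is continuous on $\overline U$ and vanishes on $\partial U$, so it attains its maximum in $U$. On $\overline D$ it is at most $\max_{\overline D}|\nabla\tau|^2/\min_{\overline D}\tau$. At a maximum point in $U\smallsetminus\overline D$, where $\Delta\tau=1$ and $\tau$ is smooth, the computation in that lemma yields $\xi_\eps\le 2$.

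With this bound, the leftover term is at most $2C\|\varphi\|_\infty\,|\{\eps<\tau<2\eps\}\cap\supp\varphi|\to0$. Hence $\tilde\tau$ is a weak solution on $\rn$ for an arbitrary open $U$, under your reading $\tau\in\cC^1(\overline U)$ with $\tau=|\nabla\tau|=0$ on $\partial U$. The rest of your uniqueness argument then goes through unchanged.
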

\begin{proof}
Let $D$ be an open bounded set and let $w$ be the unique nonnegative minimizer of \eqref{sublinear} for $p=1$ and $\Omega=D$ (see Lemma~\ref{lem:minimizer} and Theorem~\ref{uniquenessOmega}).  By Lemma~\ref{opt sol} we know that $w$ is a solution of \eqref{sublinear} with $p=1$. Note that $w$ has compact support, by Lemma~\ref{compact:sup:lem}. Let $\tau:=w$ and let $U$ be the interior of $\supp(w)$. Then $\tau$ and $U$ yield a solution for \eqref{odp}, because $\sgn(w)=1$ in $U$. This solution is unique due to Theorem~\ref{thm:supp}. 
\end{proof}

Our methods do not apply to the inner-set problem, which we leave as an interesting open question.  See also \cite{c1} and the references therein, where the analogue of the inner/outer-set problem has been considered for other two-phase overdetermined problems.

\section{Some explicit solutions for \texorpdfstring{$p=1$}{pequal1}}\label{sec:exp}

One of the advantages of the case $p=1$ is that it is easier to do some explicit computations in this setting. 

\subsection{A radial nonnegative solution}\label{sec:ex1}

As an example, let $B_1:=\{x\in \rn\::\: |x|<1\}$, and define $w$ as follows. 
For $N\geq 1$, $N\neq 2$, 
\begin{align*}
w(r):=
\begin{cases}
\dfrac{1-2^{\frac{2-N}{N}}}{N-2} - \dfrac{1}{2N}r^{2}, & r < 1, \\[6pt]
-\dfrac{2^{\frac{2-N}{N}}}{N-2} 
+ \dfrac{2}{N(N-2)}r^{2-N}
+ \dfrac{1}{2N}r^{2}
, & 1 \le r < 2^{1/N}, \\[10pt]
0, & r \ge 2^{1/N}.
\end{cases}
\end{align*}
If $N=2,$ let 
\begin{align*}
 w(r):=
\begin{cases}
\dfrac{\ln 2}{2} - \dfrac{1}{4}r^{2}, & r < 1, \\[6pt]
\dfrac{\ln 2-1}{2}
-\ln r
+ \dfrac{1}{4}r^{2}
, & 1 \le r < \sqrt{2}, \\[10pt]
0, & r \ge \sqrt{2}.
\end{cases}
\end{align*}
Then a nonnegative solution of the problem 
\begin{align*}
    -\Delta w = Q_{B_1}\sgn(w)\quad \text{ in }\rn,
\end{align*}
is given by $w(x)=w(|x|)$ (this is actually the only nonnegative solution, see Theorem~\ref{uniquenessOmega}).

Notice that by Remark~\ref{scaling} this also gives an explicit solution for the problem
\[ -\Delta v_R = Q_{B_R}\sgn(v_R)\quad \text{ in }\rn \]
given by $v_R(r)=R^{\frac{2}{2-p}} w(r/R)$, for any $R >0$. Consequently, the support of $v_R$ is the ball of radius $2^{\frac 1 N} R$, consistently with Remark~\ref{scaling}.

\subsection{A nodal solution}
Let
\begin{align*}
u(x):=u_1(x)\chi_{\{|x|<r_1\}}+u_2(x)\chi_{\{r_1<|x|<1\}}+u_3(x)\chi_{\{1<|x|<r_2\}},    
\end{align*}
where
\begin{align*}
    u_1(x)=c_1-x^2/2,\quad 
    u_2(x)=(x-c_2)^2/2-c_3,\quad
    u_3(x)=-(x-r_2)^2/2,
\end{align*}
$r_1=\frac{1}{7} \left(4-\sqrt{2}\right),$ 
$c_1=\frac{1}{49} \left(9-4\sqrt{2}\right),$
$c_2=\frac{2}{7} \left(4-\sqrt{2}\right)$, 
$c_3=\frac{1}{49} \left(9-4\sqrt{2}\right),$
$r_2=\frac{2}{7} \left(3+\sqrt{2}\right)$. Then $u$ is a nodal solution of the equation
\begin{align*}
    -u'' = Q_{B_1}\sgn(u)\qquad \text{ in }\r.
\end{align*}
By scaling, see Remark~\ref{scaling}, we can build from $u$ an explicit nodal solution for $\Omega= B_R$ with any $R >0$. 

\begin{figure}[ht!]
    \centering
    \includegraphics[width=0.5\linewidth]{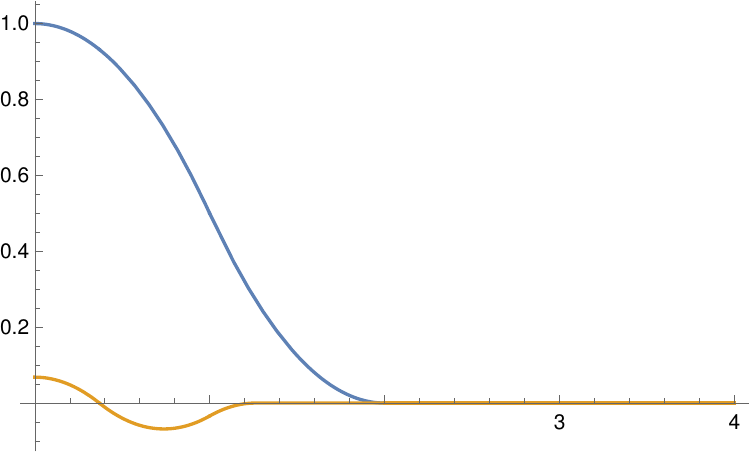}
    \caption{A comparison between a nonnegative and a nodal solution of $\eqref{sublinear}$ with  $N=1$, $\Omega=[-1,1],$ and $p=1$. Note that the supports are different.}
\end{figure}

\section*{Acknowledgments}
We thank Gabrielle Nornberg for helpful discussions. 

Delia Schiera is partially supported by the Portuguese government through FCT Fundação para a Ciência e a Tecnologia, I.P., under the projects UID/4459/2025 (CAMGSD), 2023.17881.ICDT with DOI  10.54499/2023.17881.ICDT (project SHADE), and 2024.14494.PEX with DOI 10.54499/2024.14494.PEX (project ASSO).
She is also supported by GNAMPA (Gruppo Nazionale per l’Analisi, Probabilità e le loro Applicazioni) – INdAM (Istituto
Nazionale di Alta Matematica), through the project ‘Critical and limiting phenomena in nonlinear elliptic
systems’, CUP E5324001950001, and by FCT 
under the Scientific Employment Stimulus - Individual Call (CEEC Individual), DOI 10.54499/2020.02540.CEECIND/CP1587/CT0008.

This work was developed while Delia Schiera was visiting the Universidad Nacional Autónoma de México: she gratefully acknowledges the financial support of FCT through the FCT Mobility grant with number FCT/Mobility/1304796440/2024-25, as well as the warm hospitality she received during her stay. 

A. Saldaña is supported by SECIHTI grant CBF2023-2024-116 (Mexico) and by UNAM-DGAPA-PAPIIT grant IN102925 (Mexico).

\bibliographystyle{amsplain}
\bibliography{refs.bib}

\noindent\textbf{Mónica Clapp}\\
Instituto de Matemáticas \\
Universidad Nacional Autónoma de México \\
Campus Juriquilla \\
76230 Querétaro, Qro., Mexico \\
\texttt{monica.clapp@im.unam.mx}

\medskip 

\noindent\textbf{Alberto Saldaña}\\
Instituto de Matemáticas \\
Universidad Nacional Autónoma de México \\
Circuito Exterior, Ciudad Universitaria \\
04510 Coyoacán, Ciudad de México, Mexico \\
\texttt{alberto.saldana@im.unam.mx}

\medskip 

\noindent\textbf{Delia Schiera}\\
CAMGSD - Centro de An\'alise Matem\'atica, Geometria e Sistemas Din\^amicos\\
Departamento de Matem\'atica do Instituto Superior T\'ecnico\\
Universidade de Lisboa\\
1049-001 Lisboa, Portugal\\
\texttt{delia.schiera@tecnico.ulisboa.pt}

\end{document}